\newtheorem{theorem}{Theorem}[section]
\newtheorem{lemma}{Lemma}[section]
\newtheorem{proposition}{Proposition}[section]
\newtheorem{definition}{Definition}[section]
\newtheorem{remark}{Remark}[section]
\theoremstyle{definition} \theoremstyle{remark}
\numberwithin{equation}{section}
\begin{document}

\markboth{Y.Z. Yang, Y. Zhou, X.L. Liu et al}{Fractional coupled chemotaxis-fluid equations}

\date{}

\baselineskip 0.22in

\title{\bf Cauchy problems for time-space fractional coupled chemotaxis-fluid equations in Besov-Morrey spaces}

\author{ Yong Zhen Yang$^1$, Yong Zhou$^{1,2}$, Xiao Lin Liu$^3$\\[1.8mm]
\footnotesize {Correspondence: yozhou@must.edu.mo}\\
\footnotesize  {$^{1}$ Faculty of Mathematics and Computational Science, Xiangtan University}\\
\footnotesize  {Hunan 411105, P.R. China}\\[1.5mm]
\footnotesize {$^2$ Macao Centre for Mathematical Sciences, Macau University of Science and Technology}\\
\footnotesize {Macau 999078, P.R. China}\\[1.5mm]
\footnotesize  {$^{3}$ College of Mathematics and Information Sciences, Guangzhou University}\\
\footnotesize  {Guangzhou 510006, P.R. China}
}

\maketitle

\begin{abstract}
In this paper, we consider the Cauchy problems for the time-space fractional coupled chemotaxis-fluid equations, which is a generalized form of the coupled chemotaxis-fluid equations studied in \cite{M.H. Yang}. In contrast to \cite{M.H. Yang}, the solution operator of the system does not satisfy the semigroup effect, which makes the approach of \cite{M.H. Yang} inapplicable. Based on the theory of harmonic analysis, using techniques such as real interpolation, embedding in Besov-Morrey spaces, the multiplier theorem, and the Hardy-Littelwood inequality in Morrey spaces, we establish global existence. As an application, we analysis the asymptotic behavior of the solutions.\\[2mm]
{\bf MSC:} 26A33; 3408A.\\
{\bf Keywords:} coupled chemotaxis-fluid equations; global existence; asymptotic behavior
\end{abstract}

\baselineskip 0.25in

\section{Introduction}
Over the past decade, the study of fractional partial differential equations has received extensive attention both domestically and internationally. Fractional calculus, characterized by its non-local properties, is particularly suited for modeling materials and processes in the real world that exhibit memory and hereditary properties. Examples include viscoelastic mechanics models, electromagnetic wave propagation, fractional control systems and controllers, percolation in fractal media, combustion phenomena, and the evolution of economic variables over time, all of which are described using fractional partial differential equations, see \cite{Y zhou2,Hilfer,A.A.Kilbas,He1,He2,K. Diethelm,Kempainen,LiLei,He3,Y.Z. Yang}.

In this article, we examine the time-space fractional Keller-Segel-Navier-Stokes model, which is described by the following system:
\begin{align}\label{Eq:K-S-N-S}
    \begin{cases}
        \partial_{t}^{\alpha}u + \left(u \cdot \nabla\right) u + \left(-\Delta\right)^{\frac{\beta}{2}}u + \nabla\pi = -v \nabla\phi, & \text{in } (0,\infty) \times \mathbb{R}^{d},\\
        \partial_{t}^{\alpha}v + \left(u \cdot \nabla\right) v + \left(-\Delta\right)^{\frac{\beta}{2}}v = -\nabla \cdot \left(v \nabla \left(-\Delta\right)^{\frac{\beta}{2}-1}w\right), & \text{in } (0,\infty) \times \mathbb{R}^{d}, \\
        \partial_{t}^{\alpha}w + \left(u \cdot \nabla\right) w + \left(-\Delta\right)^{\frac{\beta}{2}}w = \left(\left(-\Delta\right)^{\frac{2-\beta}{2}}w\right)v, & \text{in } (0,\infty) \times \mathbb{R}^{d}, \\
        \nabla \cdot u = 0, & \\
        (u,v,w)|_{t=0} = (u_{0},v_{0},w_{0}), & \text{in } \mathbb{R}^{d},
    \end{cases}
\end{align}
where \(0 < \alpha < 1\), \(1 < \beta <2\), and the functions \(u(\cdot,\cdot)\), \(v(\cdot,\cdot)\), \(w(\cdot,\cdot)\), and \(\pi(\cdot,\cdot)\), mapping from \([0, \infty) \times \mathbb{R}^{d}\) to \(\mathbb{R}^{d}\), \(\mathbb{R}\), \(\mathbb{R}\), and \(\mathbb{R}\) respectively, denote the velocity field of the fluid, the density of cells or microbes, the concentration of oxygen, and the scalar pressure. The function $\phi$ represents potential functions generated by various physical mechanisms, such as gravity or centrifugal force, and it is typically independent of $t$. The operator \(\partial_{t}^{\alpha}\) represents the \(\alpha\)-Caputo fractional derivative, and \(\left(-\Delta\right)^{\frac{\beta}{2}}\) is the fractional Laplacian operator, defined via Fourier multipliers as
\begin{equation*}
\left(-\Delta\right)^{\frac{\beta}{2}}\rho(x) = \mathcal{F}^{-1}\left(\lvert \xi \rvert^{\beta} \widehat{\rho}(\xi)\right)(x),
\end{equation*}
where the fractional Laplacian \(\left(-\Delta\right)^{\frac{\beta}{2}}\rho\) characteristically represents a L\'{e}vy diffusion process.
Moreover, the term \(\left(-\Delta\right)^{\frac{\beta}{2}-1}w\) is defined through convolution with a singular Riesz kernel,
\begin{equation*}
\left(\left(-\Delta\right)^{\frac{\beta}{2}-1}w\right)(x) = s_{d} \int_{\mathbb{R}^{d}} \frac{w(y)}{\lvert x-y\rvert^{d+\beta-2}} \, dy,
\end{equation*}
which is the attractive kernel as \cite{D. Li} pointed out.

The system \eqref{Eq:K-S-N-S} originates from the study of biomathematics in the context of the chemotaxis-fluid system, which is a chemotaxis system influenced by fluid dynamics as proposed by Tuval \cite{Tuval}. The model proposed by Tuval can be using to simulate the aggregation behavior of aerobic bacteria in an incompressible fluid. The representation is as follows:

\begin{align}
    \begin{cases}
        \partial_{t}u + \kappa\left(u \cdot \nabla\right) u = \mu\Delta u - \nabla\pi - v \nabla\phi, & \text{in } (0,\infty) \times \mathbb{R}^{d},\\
        \partial_{t}v + \left(u \cdot \nabla\right) v = \nu\Delta v - \nabla \cdot \left(\chi(w)v \nabla B(w)\right), & \text{in } (0,\infty) \times \mathbb{R}^{d}, \\
        \partial_{t}w + \left(u \cdot \nabla\right) w = \omega\Delta w - f(w)v, & \text{in } (0,\infty) \times \mathbb{R}^{d}, \\
        \nabla \cdot u = 0, & \\
        (u,v,w)|_{t=0} = (u_{0},v_{0},w_{0}), & \text{in } \mathbb{R}^{d},
    \end{cases}
    \label{chemotaxis-fluid system}
\end{align}
where the unknown functions $u, v, w, \pi$ represent the same as in \eqref{Eq:K-S-N-S}, $\chi(w)$ denotes chemotactic sensitivity, $f(w)$ is the consumption rate of cells for the chemical substance, and $B(w)$ is a linear function of $w$, such as $B(w)=w$. A significant body of literature has deeply studied the \eqref{chemotaxis-fluid system}, see \cite{M. Chae,M. Winkler,M.H. Yang,J. Zhang,J.G. Liu,H. Kozono3}. Chae \cite{M. Chae} study the stability and asymptotic behaviors on \eqref{chemotaxis-fluid system}. Duan \cite{R. Duan} established the global existence and convergence rate for classical solutions of the \eqref{chemotaxis-fluid system} near constant states. In \cite{J.G. Liu}, Liu derived the global existence of weak solutions for the two-dimensional system \eqref{chemotaxis-fluid system} with large initial data. Furthermore, for the chemotaxis-fluid system in three-dimensional space where the cell density exhibits nonlinear diffusion, they established the global existence of weak solutions.
It is worth mentioning that,  Yang \cite{M.H. Yang}  studies a simplified form of the chemotaxis-fluid system \eqref{chemotaxis-fluid system}, which can be represented as follows:
\begin{align}\label{jianhuachemotaxis-fluid system}
\begin{cases}
        \partial_{t}u + (u \cdot \nabla) u = \Delta u - \nabla\pi - v \nabla\phi, & \text{in } (0,\infty) \times \mathbb{R}^{d},\\
        \partial_{t}v + (u \cdot \nabla) v = \Delta v - \nabla \cdot (v \nabla w), & \text{in } (0,\infty) \times \mathbb{R}^{d}, \\
        \partial_{t}w + (u \cdot \nabla) w = \Delta w - w v, & \text{in } (0,\infty) \times \mathbb{R}^{d}, \\
        \nabla \cdot u = 0, & \\
        \left(u(0),v(0),w(0)\right) = (u_{0},v_{0},w_{0}), & \text{in } \mathbb{R}^{d},
    \end{cases}
\end{align}
and establishes the global well-posedness for \eqref{jianhuachemotaxis-fluid system} when the initial data
\[
\left(u_{0},v_{0},\nabla w_{0},\nabla\phi\right)\in\dot{\mathcal{N}}^{-1+\frac{d-\lambda}{r_{1}}}_{r_{1},\lambda,\infty}\times
\dot{\mathcal{N}}^{-2+\frac{d-\lambda}{r_{2}}}_{r_{2},\lambda,\infty}\times
\dot{\mathcal{N}}^{-1+\frac{d-\lambda}{r_{3}}}_{r_{3},\lambda,\infty}\times\mathcal{M}_{d-\lambda,\lambda}, c_{0}\in L^{\infty}(\mathbb{R}^{d}),
\]
improving to some extent on the global well-posedness results established by Kozono \cite{H. Kozono3} for the chemotaxis-fluid system \eqref{chemotaxis-fluid system} when the data
\[
\left(u_{0},v_{0},\nabla w_{0},\nabla\phi\right)\in L^{d,\infty}(\mathbb{R}^{d})\times L^{\frac{d}{2},\infty}(\mathbb{R}^{d})\times L^{d,\infty}(\mathbb{R}^{d})\times L^{d,\infty}(\mathbb{R}^{d}), c_{0}\in L^{\infty}(\mathbb{R}^{d}), d\geq 3,
\]
and
\[
\left(u_{0},v_{0},\nabla w_{0},\nabla\phi\right)\in L^{2,\infty}(\mathbb{R}^{d})\times L^{1}(\mathbb{R}^{d})\times L^{2,\infty}(\mathbb{R}^{d})\times L^{2,\infty}(\mathbb{R}^{d}), c_{0}\in L^{\infty}(\mathbb{R}^{d}), d=2.
\]
This process chiefly relies on the semigroup proposition of the solution operator $e^{t\Delta}$ for the system \eqref{jianhuachemotaxis-fluid system} and the equivalence in Besov-Morrey space due to the smoothing effect of the heat kernel $e^{t\Delta}$, that is for $1\leq p\leq\infty$, $0\leq \lambda<d$, $s>0$,
\begin{align}\label{HKSE}
f\in\dot{\mathcal{N}}^{-2s}_{p,\lambda,q}\Leftrightarrow
\begin{cases}
\left[\int_{0}^{\infty}\left(t^{s}\|e^{t\Delta}f\|_{p,\lambda}\right)^{q}\frac{dt}{t}\right]^{\frac{1}{q}}<\infty, & 1\leq q<\infty,\\
\sup_{t>0}\left(t^{s}\|e^{t\Delta}f\|_{p,\lambda}\right)<\infty, & q=\infty.
\end{cases}
\end{align}
Subsequently, Zhang \cite{J. Zhang} explored the following doubled chemotaxis model in the Navier-Stokes fluid involving five unknown functions \(u, v, w, n, \pi\):

\begin{align*}
\begin{cases}
\partial_{t}u + (u \cdot \nabla) u = \Delta u - \nabla\pi - v f, & \text{in } (0,\infty) \times \mathbb{R}^{d},\\
\partial_{t}v + (u \cdot \nabla) v = \Delta v - \nabla \cdot (v \nabla w) - \nabla \cdot (v \nabla n), & \text{in } (0,\infty) \times \mathbb{R}^{d}, \\
\partial_{t}w + (u \cdot \nabla) w = \Delta w - w v, & \text{in } (0,\infty) \times \mathbb{R}^{d}, \\
\partial_{t}n + (u \cdot \nabla) n = \Delta n - \gamma n + v, & \text{in } (0,\infty) \times \mathbb{R}^{d}, \\
\nabla \cdot u = 0, & \\
(u,v,w,n)|_{t=0} = (u_{0},v_{0},w_{0},n_{0}), & \text{in } \mathbb{R}^{d}.
\end{cases}
\end{align*}
By exploiting the semigroup properties of the heat kernel \(e^{t\Delta}\) and the equivalence of the norms of the heat kernel \(e^{t\Delta}\) in the Besov-Morrey space, using the approach presented in \cite{M.H. Yang}, global existence and asymptotic behavior of solutions for the doubled chemotaxis model in the Navier-Stokes fluid have been obtained.

Our model \eqref{Eq:K-S-N-S} generalizes the chemotaxis-fluid model \eqref{jianhuachemotaxis-fluid system}. It is obvious that when we consider $\alpha=1$ and $\beta=2$, \eqref{Eq:K-S-N-S} reduces to \eqref{jianhuachemotaxis-fluid system}. In contrast to \eqref{jianhuachemotaxis-fluid system}, the solution operators \(\mathcal{S}_{\alpha,\beta}(t)\) and \(\mathcal{P}_{\alpha,\beta}(t)\) determined by \eqref{Eq:K-S-N-S} do not possess the semigroup property, this leads to the fact that we cannot establish the equivalence representation similar to \eqref{HKSE}  of the norms of the solution operators \(\mathcal{S}_{\alpha,\beta}(t)\) and \(\mathcal{P}_{\alpha,\beta}(t)\) in the Besov-Morrey space. Therefore, the techniques employed in \cite{M.H. Yang, J. Zhang} are entirely ineffective for our model \eqref{Eq:K-S-N-S}. As a result, we need to employ new techniques to establish the global existence and asymptotic behavior of solutions for the model \eqref{Eq:K-S-N-S}. Moreover, In contrast to the heat kernel \(e^{t\Delta}\), the fractional heat kernel \(e^{-t(-\Delta)^{\frac{\beta}{2}}}\), corresponding to the symbol \(e^{-t|\xi|^\beta}\), does not possess smoothness at \(\xi=0\). As a result, the fractional heat kernel \(e^{-t(-\Delta)^{\frac{\beta}{2}}}\) lacks the \(C^{\infty}\) smoothness property of \(e^{t\Delta}\). Therefore, it is necessary to establish new time-space estimates based on the fractional heat kernel \(e^{-t(-\Delta)^{\frac{\beta}{2}}}\) in Besov-Morrey spaces.

The paper is organized as follow. In Section 2, we introduce some of the notations required for this paper, including fractional derivative, Besov-Morrey spaces, and multiplier theory. In Section 3, we combine the multiplier theorem to provide some time-space estimates for the solution operator of system \eqref{Eq:K-S-N-S} and prove the global existence and uniqueness of system \eqref{Eq:K-S-N-S}. In Section 4, we analysis the asymptotic behavior of mild solutions to system \eqref{Eq:K-S-N-S}.

\section{Notations and Preliminaries}
Some of the symbols, definitions, and lemmas used in this article are introduced in this section.

In this paper, we denote \( C \) as a generic constant that does not depend on any variables, and \( C \) can vary from one line to another. The notation $f \lesssim g$ indicates that there exists a positive constant $C$ such that $f \leq C g$. Let \((X, \| \cdot \|_{X})\) denote a Banach space, and let \(I \subset \mathbb{R}\) be an interval. The symbol \(L^{p}(I, X)\) represents the set of all \(X\)-valued \(L^{p}\)-integrable functions, equipped with the standard norm$\|f\|_{p}^{p} = \int_{I} \|f(t)\|^{p}_{X} \, dt$. Moreover, \(\mathcal{S}(\mathbb{R}^{d})\) denotes the class of Schwartz functions, whereas \(\mathcal{S'}(\mathbb{R}^{d})\) stands for tempered distributions. The symbols \(\mathcal{L}\) and \(\mathcal{L}^{-1}\) respectively represent the Laplace transform and the inverse Laplace transform.
For any \(g \in \mathbb{R}^{d}\), let \(\mathcal{F}\) and \(\mathcal{F}^{-1}\) respectively denote its Fourier transform and inverse Fourier transform, defined by
\[
\big(\mathcal{F}g\big)(\xi) = \widehat{g}(\xi)(2\pi)^{-\frac{d}{2}}\int_{\mathbb{R}^{d}}g(x)e^{-ix\cdot\xi}\,dx,
\]
and
\[
\big(\mathcal{F}^{-1}g\big)(x) = \check{g}(x) = (2\pi)^{-\frac{d}{2}}\int_{\mathbb{R}^{d}}\hat{g}(\xi)e^{ix\cdot\xi}\,d\xi.
\]
By employing duality, we can extend the definitions of the generalized Fourier transform and inverse Fourier transform to \(\mathcal{S'}(\mathbb{R}^{d})\). That is, for any \(g \in \mathcal{S'}\) and \(\mathcal{F}g\), \(\mathcal{F}^{-1}g\) can be defined as follows. For every \(\phi \in \mathcal{S}\),
\[
\big\langle\mathcal{F}g,\phi\big\rangle = \langle g,\mathcal{F}\phi\rangle,
\]
and
\[
\big\langle\mathcal{F}^{-1}g,\phi\big\rangle = \langle g,\mathcal{F}^{-1}\phi\rangle.
\]
Furthermore, when $f$ is a measurable function on $\mathbb{R}^{d}$ with a bound on its polynomial growth at infinity, we describe the operator $f(D)$ through the formula $f(D)a \equiv \mathcal{F}^{-1}(f\mathcal{F}a)$.

Therefore, $e^{-t(-\Delta)^{\frac{\beta}{2}}}$ is a pseudodifferential operator with the symbol $e^{-t|\xi|^{\beta}}$, and it can be defined as a convolution operator represented by the fractional heat kernel $K^{t}(x)$,
\begin{equation*}
e^{-t(-\Delta)^{\frac{\beta}{2}}}f(x) = \mathcal{F}^{-1}\left(e^{-t|\xi|^{\beta}}\mathcal{F}f\right)(x) = \mathcal{F}^{-1}(e^{-t|\xi|^{\beta}}) \ast f(x) = K_{t} \ast f(x),
\end{equation*}
where
\begin{equation*}
K_{t}(x) = \mathcal{F}^{-1}(e^{-t|\xi|^{\beta}})(x) = t^{-\frac{d}{\beta}}K\left(t^{-\frac{1}{\beta}}x\right), \; K(x) = (2\pi)^{-\frac{d}{2}}\int_{\mathbb{R}^{d}}e^{ix\xi}e^{-|\xi|^{\beta}}d\xi.
\end{equation*}
Similarly, for any $\vartheta \geq 0$, $(-\Delta)^{\frac{\vartheta}{2}}e^{-t(-\Delta)^{\frac{\beta}{2}}}$ is a pseudo-differential operator with the symbol $|\xi|^{\vartheta}e^{-t|\xi|^{\beta}}$, and can be defined as
\begin{equation*}
(-\Delta)^{\frac{\vartheta}{2}}e^{-t(-\Delta)^{\frac{\beta}{2}}}f(x) = \mathcal{F}^{-1}\left(|\xi|^{\vartheta}e^{-t|\xi|^{\beta}}\right) \ast f(x) = K_{t,\vartheta} \ast f(x),
\end{equation*}
where
\begin{equation*}
K_{t,\vartheta}(x) = t^{-\frac{\vartheta}{\beta}}t^{-\frac{d}{\beta}}K_{\vartheta}\left(t^{-\frac{1}{\beta}}x\right), \; K_{\vartheta}(x) = (2\pi)^{-\frac{d}{2}}\int_{\mathbb{R}^{d}}e^{ix\xi}|\xi|^{\vartheta}e^{-|\xi|^{\beta}}d\xi.
\end{equation*}
For further details on $e^{-t(-\Delta)^{\frac{\beta}{2}}}$ and $(-\Delta)^{\frac{\vartheta}{2}}e^{-t(-\Delta)^{\frac{\beta}{2}}}$, refer to \cite{Miao}.
\begin{definition}
For a function $f\in L^{1}\left(0,\infty;\mathcal{S}(\mathbb{R}^{n})\right)$, the Riemann-Liouville fractional integral with $0<\alpha<1$ is defined as
\[
{}_{0}I_{t}^{\alpha}\!f(t,x)=\frac{1}{\Gamma(\alpha)}\int_{0}^{t}(t-s)^{\alpha-1}f(s,x)ds=g_{\alpha}(t)*f(t,x),
\]
where $g_{\alpha}(t)=\frac{t^{\alpha-1}}{\Gamma(\alpha)}$.
\end{definition}
\begin{definition}
For $\alpha\in(0,1)$, the Caputo fractional derivative of order $\alpha$ for a function $f\in L^{1}\left(0,\infty;\mathcal{S}(\mathbb{R}^{n})\right)$ is defined as
\begin{equation*}
\partial^{\alpha}_{t}f(t,x)=\frac{d}{dt}\left(g_{1-\alpha}(t)*(f(t,x)-f(0,x))\right).
\end{equation*}
\end{definition}

Next, we delve into the Mittag-Leffler function, denoted as $E_{\alpha,\beta}(z)$, and the Mainardi-Wright function, represented by $M_{\alpha}(z)$. Their respective definitions are as follows:

\begin{equation*}
E_{\alpha,\beta}(\rho) = \sum_{n=0}^{\infty} \frac{\rho^{n}}{\Gamma(\alpha n + \beta)}, \qquad \text{for }\alpha, \beta > 0, \text{ and }\rho \in \mathbb{C},
\end{equation*}

\begin{equation*}
M_{\alpha}(\rho) = \sum_{n=0}^{\infty} \frac{(-\rho)^{n}}{n!\Gamma(1 - \alpha(n + 1))}, \qquad \alpha \in (0,1), \text{ and }\rho \in \mathbb{C}.
\end{equation*}

Regarding the Laplace transform of the Mittag-Leffler function, it is expressed as:

\begin{equation}
\int_{0}^{\infty} e^{-st} \cdot t^{\beta-1} \cdot E_{\alpha,\beta}(\pm at^{\alpha}) \, dt = \frac{s^{\alpha-\beta}}{s^{\alpha} \mp a}, \qquad \text{where }\alpha, \beta > 0.
\end{equation}
Moreover, the solution operators represented by the Mittag-Leffler functions \(\mathcal{S}_{\alpha,\beta}(t)\) and \(\mathcal{P}_{\alpha,\beta}(t)\) can be respectively reinterpreted through the integration of the Mainardi-Wright functions and the fractional heat kernel \(e^{-t(-\Delta)^{\frac{\beta}{2}}}\). These integrations are expressed as follows:
\begin{equation}\label{couple relation 1}
\mathcal{S}_{\alpha,\beta}(t) = \int_{0}^{\infty} M_{\alpha}(\theta) e^{-\theta t^{\alpha}(-\Delta)^{\frac{\beta}{2}}} \, d\theta,
\end{equation}
\begin{equation}\label{couple relation 2}
\mathcal{P}_{\alpha,\beta}(t) = \int_{0}^{\infty} \alpha \theta M_{\alpha}(\theta) e^{-\theta t^{\alpha}(-\Delta)^{\frac{\beta}{2}}} \, d\theta,
\end{equation}
where $\mathcal{S}_{\alpha,\beta}(t)$,$\mathcal{P}_{\alpha,\beta}(t)$ is defined that
$$
\mathcal{S}_{\alpha,\beta}(t)=\mathcal{F}^{-1}\big[E_{\alpha,1}(-t^{\alpha}|\xi|^{\beta})\big],\mathcal{P}_{\alpha,\beta}(t)=\mathcal{F}^{-1}\big[E_{\alpha,
\alpha}(-t^{\alpha}|\xi|^{\beta})\big]
$$
in the sense of distribution.

Furthermore,
$$
\int_{0}^{\infty}M_{\alpha}(\theta)\theta^{\rho}d\theta=\frac{\Gamma(1+\rho)}{\Gamma(1+\alpha\rho)},\text{ for }\rho>-1.
$$
Readers interested in exploring more about fractional calculus and the details of the Mittag-Leffler functions can refer to \cite{Hilfer,A.A.Kilbas,Y zhou2}.

It is important to note that the equation \eqref{Eq:K-S-N-S} exhibits well scaling properties. Specifically, if $(u, v, w, \pi, u_{0}, v_{0}, w_{0})$ satisfies equation \eqref{Eq:K-S-N-S}, then the triplet
$$
\big(u_{\lambda}, v_{\lambda}, w_{\lambda}, \pi_{\lambda}, (u_{0})_{\lambda}, (v_{0})_{\lambda}, (w_{0})_{\lambda}\big)
$$
also satisfies equation \eqref{Eq:K-S-N-S}, where the transformations are defined as follows:
\begin{align*}
u_{\lambda} &= \lambda^{(\beta-1)}u(\lambda^{\frac{\beta}{\alpha}}t, \lambda x), & (u_{0})_{\lambda} = \lambda^{(\beta-1)}u_{0}(\lambda x),\\
v_{\lambda} &= \lambda^{2(\beta-1)}v(\lambda^{\frac{\beta}{\alpha}}t, \lambda x), & (v_{0})_{\lambda} = \lambda^{2(\beta-1)}v_{0}(\lambda x),\\
w_{\lambda} &= w(\lambda^{\frac{\beta}{\alpha}}t, \lambda x), & (w_{0})_{\lambda} = w_{0}(\lambda x),\\
\pi_{\lambda} &= \lambda^{2(\beta-1)}\pi(\lambda^{\frac{\beta}{\alpha}}t, \lambda x).
\end{align*}

Furthermore, applying the Fourier and Laplace transforms to system \eqref{Eq:K-S-N-S} yields:
\begin{align}\label{Mild solution K-S-N-V}
\begin{cases}
u = \mathcal{S}_{\alpha,\beta}(t)u_{0} - \int_{0}^{t}(t-\theta)^{\alpha-1}\mathcal{P}_{\alpha,\beta}(t-\theta)\mathbb{P}(\nabla u\cdot u + v\nabla\phi) \, d\theta,\\
v = \mathcal{S}_{\alpha,\beta}(t)v_{0} - \int_{0}^{t}(t-\theta)^{\alpha-1}\mathcal{P}_{\alpha,\beta}(t-\theta)(\nabla v\cdot u + \nabla\cdot(v\cdot\nabla B(w))) \, d\theta,\\
w = \mathcal{S}_{\alpha,\beta}(t)w_{0} - \int_{0}^{t}(t-\theta)^{\alpha-1}\mathcal{P}_{\alpha,\beta}(t-\theta)(\nabla w\cdot u + ((-\Delta)^{\frac{2-\beta}{2}}w)v) \, d\theta,
\end{cases}
\end{align}
where $B(w)=(-\Delta)^{\frac{\beta - 2}{2}}w$, and $\mathbb{P}$ represents the Leray projection operator, articulated as a $d \times d$ matrix
$$
\mathbb{P} = \big\{\mathbb{P}_{j,k}\big\}_{1\leq j,k\leq d} = \big\{\delta_{jk} + R_{j}R_{k}\big\}_{1\leq j,k\leq d},
$$
with $\delta_{jk}$ being the Kronecker delta, and $R_{j} = \partial_{j}(-\Delta)^{-\frac{1}{2}}$ representing the Riesz transform, which can also be perceived as the Hilbert transform in $d$-dimensional space.
\begin{definition}
Let \(X\) be a Banach space. If the triplet \((u,v,w)\) satisfies equation \eqref{Mild solution K-S-N-V}, then \((u,v,w)\) is referred to as a mild solution of the system \eqref{Eq:K-S-N-S}.
\end{definition}
Next, we introduce some basic properities of the Besov-Morrey spaces and the Littlewood-Paley decomposition. We begin with an overview of the Morrey spaces.

Let \(1 \leq p < \infty\) and \(0 \leq \lambda < d\). The Morrey space \(\mathcal{M}_{p,\lambda}(\mathbb{R}^{d})\) can be defined as
\[
\mathcal{M}_{p,\lambda}(\mathbb{R}^{d}) = \left\{ f \in L_{\text{loc}}^{p}(\mathbb{R}^{d}) : \left\| f \right\|_{p,\lambda} < \infty \right\},
\]
where
\[
\left\| f \right\|_{p,\lambda} = \sup_{x_{0} \in \mathbb{R}^{d}} \sup_{R > 0} R^{-\frac{\lambda}{p}} \left( \int_{B(x_{0},R)} |f(y)|^{p} \, dy \right)^{\frac{1}{p}},
\]
with \(B(x_{0},R)\) being the ball in \(\mathbb{R}^{d}\) centered at \(x_{0}\) with radius \(R\). Specifically, \(\mathcal{M}_{p,0} = L^{p}\) for \(1 < p < \infty\), \(\mathcal{M}_{\infty,\lambda} = L^{\infty}\), and \(\mathcal{M}_{1,0}\) is the set of functions consisting of finite Radon measures on \(\mathbb{R}^{d}\). The pair \((\mathcal{M}_{p,\lambda}, \left\| \cdot \right\|_{p,\lambda})\) constitutes a Banach space and possesses the following scaling property:
\[
\left\| f_{k} \right\|_{p,\lambda} = k^{-\frac{d-\lambda}{p}} \left\| f \right\|_{p,\lambda}.
\]
Moreover, based on the Morrey norm, the following homogeneous Sobolev-Morrey space can be defined. For \(s \in \mathbb{R}^{d}\), \(1 \leq p < \infty\), \(\mathcal{M}_{p,\lambda}^{s} = (-\Delta)^{\frac{s}{2}}\mathcal{M}_{p,\lambda}\) equipped with the norm \(\left\| f \right\|_{\mathcal{M}_{p,\lambda}^{s}} = \left\| (-\Delta)^{\frac{s}{2}}f \right\|_{\mathcal{M}_{p,\lambda}}\). Specifically, \(\mathcal{M}^{s}_{p,0}(\mathbb{R}^{d}) = \dot{H}^{s}_{p}\) for \(1 < p < \infty\) and \((-\Delta)^{\frac{l}{2}}\mathcal{M}_{p,\lambda}^{s} = \mathcal{M}_{p,\lambda}^{s-l}\) for \(0 \leq l \leq s\).

Next, let \(\mathcal{S}_{h}(\mathbb{R}^{d}) = \left\{f \in \mathcal{S} : \partial^{\alpha}\mathcal{F}f(0) = 0 \right\}\) for all \(\alpha \in \mathbb{N}^{d} \cup \{0\}\), and let \(\mathcal{S}^{'}_{h}(\mathbb{R}^{d}) = \mathcal{S'}/\mathcal{P}(\mathbb{R}^{d})\) is the dual space of $\mathcal{S}_{h}(\mathbb{R}^{d})$, where \(\mathcal{P}(\mathbb{R}^{d})\) denotes the space of polynomials. Let \(\left\{\phi_{j}\right\}_{j \in \mathbb{Z}}\) be a sequence of radial smooth bump functions, and
\[
\text{supp}\, \mathcal{F}\phi_{0} = \mathcal{C}_{0} := \left\{\xi \in \mathbb{R}^{d} : \frac{2}{3} \leq |\xi| \leq 3\right\} \text{ and } \phi_{0} > 0 \text{ on } \mathcal{C}_{0}.
\]
Define \(\widehat{\phi_{k}} = \widehat{\phi_{0}}(2^{-k}\xi)\), \(\mathcal{C}_{j} = 2^{j}\mathcal{C}_{0}\), and consider the following radial smooth bump functions \(\left\{\varphi_{j}\right\}_{j \in \mathbb{Z}}\) satisfying the properties:
\[
\widehat{\varphi_{k}}(\xi) =
\begin{cases}
\frac{\widehat{\phi_{k}}(\xi)}{\sum_{k \in \mathbb{Z}}\widehat{\phi_{k}}(\xi)}, & \xi \in \mathbb{R}^{d} \setminus \{0\}, \\
0, & \xi = 0,
\end{cases}
\]
and
\[
\sum_{k \in \mathbb{Z}}\widehat{\varphi_{k}}(\xi) =
\begin{cases}
1, & \xi \neq 0, \\
0, & \xi = 0.
\end{cases}
\]
Moreover,
\[
\text{supp}\, \widehat{\varphi_{k}} = 2^{k}\mathcal{C}_{k}, \quad \widehat{\varphi_{k}}(\xi) = \widehat{\varphi_{0}}(2^{-k}\xi).
\]
Based on the above Littlewood-Paley decomposition, the homogeneous Besov-Morrey space \(\dot{\mathcal{N}}^{s}_{p,\lambda,r}\) can be defined as
\[
\dot{\mathcal{N}}^{s}_{p,\lambda,r}(\mathbb{R}^{d}) = \left\{f \in \mathcal{S}^{'}_{h}(\mathbb{R}^{d}) : \left\|f\right\|_{\dot{\mathcal{N}}^{s}_{p,\lambda,r}} < \infty \right\},
\]
where
\[
\left\|f\right\|_{\dot{\mathcal{N}}^{s}_{p,\lambda,r}} =
\begin{cases}
\left(\sum_{k \in \mathbb{Z}} 2^{ksr} \left\|\Delta_{k}f\right\|_{p,\lambda}^{r}\right)^{\frac{1}{r}}, & 1\leq r < \infty,\\
\sup_{k \in \mathbb{Z}} \left(2^{ks} \left\|\Delta_{k}f\right\|_{p,\lambda}\right), & r=\infty,
\end{cases}
\]
where \(\Delta_{k}f = \varphi_{k} \ast f\). In particular, $\dot{\mathcal{N}}^{s}_{p,0,r}=\dot{B}^{s}_{p,r}$. Similarly, $\dot{\mathcal{N}}^{s}_{p,\lambda,r}(\mathbb{R}^{d})$ and $\mathcal{M}^{s}_{p,\lambda}(\mathbb{R}^{d})$ exhibit the following scaling properties, that is,
\[
\big\|g(k\cdot)\big\|_{\dot{\mathcal{N}}^{s}_{p,\lambda,r}(\mathbb{R}^{d})} = k^{s-\frac{d-\lambda}{p}}\big\|g\big\|_{\dot{\mathcal{N}}^{s}_{p,\lambda,r}(\mathbb{R}^{d})}, \quad \big\|g(k\cdot)\big\|_{\mathcal{M}^{s}_{p,\lambda}(\mathbb{R}^{d})} = k^{s-\frac{d-\lambda}{p}}\big\|g\big\|_{\mathcal{M}^{s}_{p,\lambda}(\mathbb{R}^{d})}.
\]
For further details on Morrey spaces, Sobolev-Morrey spaces, and Besov-Morrey spaces, readers are encouraged to consult the following references \cite{H. Kozono1, H. Kozono2, A. Mazzucato, T. Kato, M. Taylor}.

The following propositions, taken from \cite{A. Mazzucato, A. Kassymov, H. Kozono2}, describe essential properties:

\begin{proposition}\label{some propesitions} \hfill
\begin{itemize}
    \item[\rm(i)] For $\omega_{1}\neq \omega_{2} \in \mathbb{R}$, $\theta \in [0,1]$ and $\omega = \theta \omega_{1} + (1-\theta)\omega_{2}$ $r_{1}, r_{2}, r \in [1, \infty]$, the following interpolation identities hold true:
    \begin{align*}
    \big(\dot{\mathcal{N}}^{\omega_{1}}_{p,\lambda,r_{1}}(\mathbb{R}^{d}), \dot{\mathcal{N}}^{\omega_{2}}_{p,\lambda,r_{2}}(\mathbb{R}^{d})\big)_{\theta,r} &= \dot{\mathcal{N}}^{\omega}_{p,\lambda,r}(\mathbb{R}^{d}), \\
    \big(\mathcal{M}^{\omega_{1}}_{p,\lambda}(\mathbb{R}^{d}), \mathcal{M}^{\omega_{2}}_{p,\lambda}(\mathbb{R}^{d})\big)_{\theta,r} &= \dot{\mathcal{N}}^{\omega}_{p,\lambda,r}(\mathbb{R}^{d}).
    \end{align*}

    \item[\rm(ii)] Given $0 < \kappa < d$, with $0 < \lambda < d-\kappa p$, and setting $\frac{1}{q} = \frac{1}{p} - \frac{\kappa}{d-\lambda}$, it follows that
    \[
    \big\|I_{d-\kappa}u\big\|_{\mathcal{M}_{q,\lambda}(\mathbb{R}^{d})} \lesssim \big\|u\big\|_{\mathcal{M}_{p,\lambda}(\mathbb{R}^{d})},
    \]
    where the operator $I_{d-\kappa}$ is defined as
    \[
    I_{d-\kappa}u(x) = s_{d} \int_{\mathbb{R}^{d}} \frac{u(y)}{|x-y|^{d-\kappa}} \, dy.
    \]

    \item[\rm(iii)] For the multiplication of functions within these spaces, we have the inequality
    \[
    \big\|fg\big\|_{\mathcal{M}_{p_{3},\lambda}(\mathbb{R}^{d})} \lesssim \big\|f\big\|_{\mathcal{M}_{p_{1},\lambda}(\mathbb{R}^{d})} \big\|g\big\|_{\mathcal{M}_{p_{2},\lambda}(\mathbb{R}^{d})},
    \]
    provided that
    \[
    \frac{1}{p_{3}} = \frac{1}{p_{1}} + \frac{1}{p_{2}}.
    \]

    \item[\rm(iv)] Embedding relations among spaces are as follows:
    \begin{align*}
    \dot{\mathcal{N}}^{s_{1}}_{p_{1},\lambda,r}(\mathbb{R}^{d}) &\hookrightarrow \dot{\mathcal{N}}^{s_{2}}_{p_{2},\lambda,r}(\mathbb{R}^{d}), \\
    \mathcal{M}^{s_{1}}_{p_{1},\lambda}(\mathbb{R}^{d}) &\hookrightarrow \mathcal{M}^{s_{2}}_{p_{2},\lambda}(\mathbb{R}^{d}), \\
    \dot{\mathcal{N}}^{0}_{p,\lambda,1}(\mathbb{R}^{d}) &\hookrightarrow \mathcal{M}_{p,\lambda}(\mathbb{R}^{d}) \hookrightarrow \dot{\mathcal{N}}^{0}_{p,\lambda,\infty}(\mathbb{R}^{d}),
    \end{align*}
    with the conditions:
    \[
    s_{1} > s_{2},\quad \text{and} \quad s_{1} - \frac{d-\lambda}{p_{1}} = s_{2} - \frac{d-\lambda}{p_{2}}.
    \]
\end{itemize}
\end{proposition}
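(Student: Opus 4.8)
\section*{Proof proposal}

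The plan is to establish the four assertions separately, ordering them so the earlier parts feed the later ones. I would dispose of the pointwise product estimate (iii) first: on a fixed ball $B(x_0,R)$ apply H\"older's inequality with the pair $\big(\tfrac{p_1}{p_3},\tfrac{p_2}{p_3}\big)$, which is admissible since $\tfrac{p_3}{p_1}+\tfrac{p_3}{p_2}=1$, to obtain $\big(\int_{B}|fg|^{p_3}\big)^{1/p_3}\le\big(\int_B|f|^{p_1}\big)^{1/p_1}\big(\int_B|g|^{p_2}\big)^{1/p_2}$; distributing the weight $R^{-\lambda/p_3}=R^{-\lambda/p_1}\,R^{-\lambda/p_2}$ and taking the supremum over $x_0$ and $R$ finishes it. There is no obstacle here.

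For the Hardy--Littlewood--Sobolev bound (ii) I would use the Adams--Hedberg method. Split $I_{d-\kappa}u(x)=\int_{|x-y|<\delta}+\int_{|x-y|\ge\delta}$. Summing the near integral over dyadic annuli and bounding $\int_{B(x,\rho)}|u|\lesssim\rho^{d}\,Mu(x)$ gives a geometric series $\lesssim\delta^{\kappa}Mu(x)$, where $M$ is the Hardy--Littlewood maximal operator; summing the far integral and using the Morrey--H\"older estimate $\int_{B(x,\rho)}|u|\lesssim\rho^{\,d-(d-\lambda)/p}\|u\|_{p,\lambda}$ gives $\lesssim\delta^{\,\kappa-(d-\lambda)/p}\|u\|_{p,\lambda}$. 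Optimizing over $\delta$ yields the Hedberg-type pointwise bound $I_{d-\kappa}u(x)\lesssim\big(Mu(x)\big)^{p/q}\,\|u\|_{p,\lambda}^{\,1-p/q}$, and then raising to the power $q$, integrating over a ball, and invoking the Morrey boundedness of $M$ closes the estimate with precisely $\tfrac1q=\tfrac1p-\tfrac{\kappa}{d-\lambda}$. The one genuinely delicate point is that the \emph{effective dimension} $d-\lambda$ (not $d$) appears, which traces back to the Morrey scaling $\|f_k\|_{p,\lambda}=k^{-(d-\lambda)/p}\|f\|_{p,\lambda}$ in the far-part estimate; keeping this exponent correct through the optimization is where care is needed.

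The embeddings in (iv) then follow as corollaries. The Sobolev-type inclusion $\mathcal{M}^{s_1}_{p_1,\lambda}\hookrightarrow\mathcal{M}^{s_2}_{p_2,\lambda}$ comes from (ii) by writing $(-\Delta)^{s_2/2}f=I_{d-\kappa}\big((-\Delta)^{s_1/2}f\big)$ with $\kappa=s_1-s_2$, so that the exponent relation $\tfrac1{p_2}=\tfrac1{p_1}-\tfrac{\kappa}{d-\lambda}$ is exactly $s_1-\tfrac{d-\lambda}{p_1}=s_2-\tfrac{d-\lambda}{p_2}$. For the Besov--Morrey inclusion $\dot{\mathcal{N}}^{s_1}_{p_1,\lambda,r}\hookrightarrow\dot{\mathcal{N}}^{s_2}_{p_2,\lambda,r}$ I would first prove the Morrey-space Bernstein inequality $\|\Delta_k f\|_{p_2,\lambda}\lesssim 2^{k(d-\lambda)(1/p_1-1/p_2)}\|\Delta_k f\|_{p_1,\lambda}$ for frequency-localized pieces (by Young's inequality against the rescaled kernel $\varphi_k$, again invoking Morrey scaling), then multiply by $2^{ks_2}$ and use $s_2+(d-\lambda)(1/p_1-1/p_2)=s_1$ before taking the $\ell^r$ norm. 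The chain $\dot{\mathcal{N}}^0_{p,\lambda,1}\hookrightarrow\mathcal{M}_{p,\lambda}\hookrightarrow\dot{\mathcal{N}}^0_{p,\lambda,\infty}$ is routine: the left inclusion from $\|f\|_{p,\lambda}\le\sum_k\|\Delta_k f\|_{p,\lambda}$, the right from the uniform boundedness of the projections $\Delta_k$ on $\mathcal{M}_{p,\lambda}$.

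Finally, for the interpolation identities (i) I would use the retraction--coretraction formalism. The map $f\mapsto(\Delta_k f)_{k\in\mathbb{Z}}$ is a retraction of $\dot{\mathcal{N}}^{s}_{p,\lambda,r}$ onto the weighted vector-valued sequence space $\ell^r_s(\mathcal{M}_{p,\lambda})$, with coretraction given by summation; since real interpolation commutes with retractions, the first identity reduces to the classical formula $\big(\ell^{r_1}_{\omega_1}(A),\ell^{r_2}_{\omega_2}(A)\big)_{\theta,r}=\ell^r_{\omega}(A)$ for $\omega_1\ne\omega_2$, with $A=\mathcal{M}_{p,\lambda}$. The second identity $\big(\mathcal{M}^{\omega_1}_{p,\lambda},\mathcal{M}^{\omega_2}_{p,\lambda}\big)_{\theta,r}=\dot{\mathcal{N}}^{\omega}_{p,\lambda,r}$ is the harder one: after reducing to a normalized pair via the lift $(-\Delta)^{\sigma/2}$, I would compute the $K$-functional between the two Sobolev--Morrey spaces and show it is comparable to a dyadic expression in $\|\Delta_k f\|_{p,\lambda}$, thereby identifying the interpolation space with the Besov--Morrey scale. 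I expect this $K$-functional computation---the passage from a ``smooth'' Sobolev--Morrey pair to the ``rough'' Besov--Morrey target---to be the main obstacle of the whole proposition, exactly as in the classical Lebesgue setting where interpolating potential spaces produces Besov spaces.
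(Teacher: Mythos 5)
The paper does not prove this proposition at all: it is stated as a collection of known facts ``taken from'' Mazzucato, Kassymov--Ragusa--Ruzhansky, and Kozono--Yamazaki, and the authors supply no argument. So there is no in-paper proof to compare against; what you have written is a reconstruction of the standard proofs from those references, and as an outline it is essentially correct. Your treatment of (iii) is exactly the textbook H\"older-on-balls argument; your Hedberg splitting for (ii) carries the effective dimension $d-\lambda$ through the optimization correctly (the convergence of the far sum is precisely where the hypothesis $\lambda<d-\kappa p$ enters, which you implicitly use); and your identification of the second interpolation identity in (i) as the genuinely hard step, to be handled by a $K$-functional computation between Sobolev--Morrey spaces, matches how Mazzucato and Kozono--Yamazaki actually prove it.

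Two small cautions. First, in the Bernstein step of (iv) you cannot literally invoke Young's inequality: the naive convolution inequality $\|\varphi*f\|_{\mathcal{M}_{p_2,\lambda}}\lesssim\|\varphi\|_{L^r}\|f\|_{\mathcal{M}_{p_1,\lambda}}$ with the Lebesgue-exponent bookkeeping fails in Morrey spaces and would in any case produce the wrong power $2^{kd(1/p_1-1/p_2)}$ rather than $2^{k(d-\lambda)(1/p_1-1/p_2)}$. The correct route is the unit-frequency convolution estimate against a Schwartz kernel (using its pointwise decay, as in Kozono--Yamazaki) followed by the Morrey scaling $\|f(k\cdot)\|_{p,\lambda}=k^{-(d-\lambda)/p}\|f\|_{p,\lambda}$; your parenthetical ``invoking Morrey scaling'' suggests you know this, but the writeup should not lean on the word ``Young.'' Second, your retraction/coretraction labels in (i) are swapped (the map into the weighted sequence space is the coretraction, summation the retraction), and one must be a little careful with the meaning of $f=\sum_k\Delta_k f$ in $\mathcal{S}'_h$ since test functions are not dense in these spaces; neither issue affects the substance.
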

The multiplier Lemma below plays a significant role in establishing estimates for operators in Besov-Morrey spaces, as mentioned in \cite{H. Kozono2}.
\begin{lemma}\label{multiplier theorem}
Consider $1 < p < \infty$ and $s, l \in \mathbb{R}$. Let $\sigma(\xi)$ be a $C^{\left[\frac{d}{2}\right]+1}$-function on $\mathbb{R}^{d} \setminus \{0\}$ and satisfy
\[
\biggl|\frac{\partial^{|\gamma|}}{\partial\xi^{\gamma}}\sigma(\xi)\biggr| \lesssim \big|\xi\big|^{l-|\gamma|}, \quad \xi \neq 0,
\]
for any multi-index $\gamma \in \mathbb{N}^{d} \cup \{0\}$ such that $|\gamma| \leq \left[\frac{d}{2}\right] + 1$. Then, the operator $\sigma(D)$ is bounded from $\mathcal{M}^{s}_{p,\lambda}(\mathbb{R}^{d})$ to $\mathcal{M}^{s-l}_{p,\lambda}(\mathbb{R}^{d})$.
\end{lemma}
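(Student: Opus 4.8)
The plan is to reduce the claim to a zeroth-order multiplier theorem on the plain Morrey space $\mathcal{M}_{p,\lambda}$, and then to recognise the resulting operator as a Calder\'on-Zygmund operator. By the definition of the Sobolev-Morrey norm, $\|\sigma(D)f\|_{\mathcal{M}^{s-l}_{p,\lambda}}=\|(-\Delta)^{(s-l)/2}\sigma(D)f\|_{p,\lambda}$ while $\|f\|_{\mathcal{M}^{s}_{p,\lambda}}=\|(-\Delta)^{s/2}f\|_{p,\lambda}$; hence, setting $g=(-\Delta)^{s/2}f$ (so that $f=(-\Delta)^{-s/2}g$ and $\|g\|_{p,\lambda}=\|f\|_{\mathcal{M}^{s}_{p,\lambda}}$), the asserted estimate is equivalent to
\[
\|\tau(D)g\|_{p,\lambda}\lesssim\|g\|_{p,\lambda},\qquad \tau(\xi)=|\xi|^{-l}\sigma(\xi),
\]
where $\tau$ is the symbol of the composition $(-\Delta)^{(s-l)/2}\sigma(D)(-\Delta)^{-s/2}$. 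A Leibniz expansion, using $|\partial^{\gamma}(|\xi|^{-l})|\lesssim|\xi|^{-l-|\gamma|}$ together with the hypothesis $|\partial^{\gamma}\sigma(\xi)|\lesssim|\xi|^{l-|\gamma|}$, gives at once $|\partial^{\gamma}\tau(\xi)|\lesssim|\xi|^{-|\gamma|}$ for every $|\gamma|\leq[d/2]+1$; that is, $\tau$ satisfies the order-zero Mikhlin condition. It therefore suffices to show that any symbol obeying the order-zero Mikhlin condition up to $[d/2]+1$ derivatives induces an operator bounded on $\mathcal{M}_{p,\lambda}$ for $1<p<\infty$ and $0\leq\lambda<d$.

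Next I would identify $\tau(D)$ as a Calder\'on-Zygmund operator. The case $\gamma=0$ yields $\tau\in L^{\infty}$, so $\tau(D)$ is bounded on $L^{2}$ by Plancherel. To control the convolution kernel $K=\mathcal{F}^{-1}\tau$, I would split $\tau=\sum_{j\in\mathbb{Z}}\tau_{j}$ dyadically in frequency, with each $\tau_{j}$ supported where $|\xi|\sim 2^{j}$ and satisfying $|\partial^{\gamma}\tau_{j}|\lesssim 2^{-j|\gamma|}$. Estimating the pieces $K_{j}=\mathcal{F}^{-1}\tau_{j}$ by transferring derivatives from $\tau_{j}$ to powers of $x$ (i.e. relating $\|x^{\gamma}K_{j}\|$ to $\|\partial^{\gamma}\tau_{j}\|$) and summing over $j$ produces the Calder\'on-Zygmund size and smoothness estimates for $K$; the regularity threshold $[d/2]+1$ is exactly the H\"ormander-Mikhlin bound that makes these sums converge. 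By Calder\'on-Zygmund theory, $\tau(D)$ is then bounded on $L^{p}$ for all $1<p<\infty$.

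Finally I would transfer the $L^{p}$ bound to the Morrey scale by the classical ball-decomposition argument. Fix a ball $B=B(x_{0},R)$ and write $g=g\chi_{2B}+g\chi_{(2B)^{c}}$. The local part is handled directly by the $L^{p}$ boundedness of $\tau(D)$ together with the definition of the Morrey norm, giving $R^{-\lambda/p}\|\tau(D)(g\chi_{2B})\|_{L^{p}(B)}\lesssim\|g\|_{p,\lambda}$. For the far part, the size estimate $|K(x-y)|\lesssim|x-y|^{-d}$ lets me bound $\tau(D)(g\chi_{(2B)^{c}})$ pointwise on $B$; decomposing $(2B)^{c}$ into the dyadic annuli $2^{k}R\leq|y-x_{0}|<2^{k+1}R$ and applying H\"older's inequality on each annulus produces a geometric series in $k$ that converges precisely because $\lambda<d$, again yielding a bound by $\|g\|_{p,\lambda}$. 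Taking the supremum over all balls completes the proof. I expect the genuine obstacle to lie in the second step, namely in extracting the Calder\'on-Zygmund kernel estimates from only $[d/2]+1$ derivatives of the symbol (the delicate H\"ormander-Mikhlin analysis, where this regularity count is sharp); by contrast, the lifting reduction of the first step is purely algebraic, and the transfer in the last step is routine once the kernel bound and the condition $\lambda<d$ are in hand.
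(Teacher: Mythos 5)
The paper does not actually prove this lemma --- it is imported verbatim from Kozono--Yamazaki \cite{H. Kozono2} --- so your sketch can only be measured against the standard literature argument, whose architecture (lift by $(-\Delta)^{\pm s/2}$ to an order-zero Mikhlin symbol, kernel estimates from a dyadic frequency decomposition, then a ball decomposition for the Morrey norm) you have correctly reproduced. The first step is indeed purely algebraic and correct: $\tau(\xi)=|\xi|^{-l}\sigma(\xi)$ satisfies $|\partial^{\gamma}\tau(\xi)|\lesssim|\xi|^{-|\gamma|}$ for $|\gamma|\le[d/2]+1$ by Leibniz, and the $L^{p}$ boundedness of $\tau(D)$ via H\"ormander--Mikhlin is classical.

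The genuine gap is in the hand-off from your second step to your third: the pointwise size bound $|K(x)|\lesssim|x|^{-d}$, on which your entire far-field estimate rests, is \emph{not} a consequence of the hypotheses. With $N=[d/2]+1$ derivatives the dyadic pieces satisfy only $|K_{j}(x)|\lesssim 2^{jd}\min\bigl(1,(2^{j}|x|)^{-N}\bigr)$, and the high-frequency part of the sum, $\sum_{2^{j}>|x|^{-1}}|x|^{-N}2^{j(d-N)}$, diverges unless $N>d$; already for $d=1$, $N=1$ the pointwise bound fails. What this regularity does give is the H\"ormander integral condition and the averaged annular bound $\|K\|_{L^{2}(|x|\sim R)}\lesssim R^{-d/2}$. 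That averaged bound lets your annulus-by-annulus argument survive via Cauchy--Schwarz when $p\ge2$ (since $\|g\|_{L^{2}(A_{k})}\le|A_{k}|^{\frac12-\frac1p}\|g\|_{L^{p}(A_{k})}$), but for $1<p<2$ H\"older runs the wrong way and the step collapses. The repair --- and the actual content of the Kozono--Yamazaki proof --- is to refuse to sum the kernel: keep the pieces $K_{j}$, split the frequency sum at $2^{j_{0}}\sim R^{-1}$ relative to the ball under consideration, use the uniform $L^{p}$ Mikhlin bound for the local/high-frequency contribution, and interpolate the weighted bounds $\|K_{j}\|_{L^{\infty}}\lesssim2^{jd}$ and $\||x|^{N}K_{j}\|_{L^{2}}\lesssim2^{j(d/2-N)}$ to obtain $L^{p'}$ annular decay for the rest, with $\lambda<d$ entering to sum the resulting geometric series. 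So the third step is not routine, and the second step as you state it does not produce the estimate the third step consumes.
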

\section{Global existence on Besov-Morrey space}
This section, we construct the global existence of system \eqref{Eq:K-S-N-S} in Besov-Morrey spaces. Initially, we establish the estimates for the solution operators through lemmas.

Unlike the symbol $e^{-t|\xi|^{2}}$ for the heat kernel $G(t,x) = \mathcal{F}^{-1}(e^{-t|\xi|^{2}})$, the symbol $e^{-t|\xi|^{\beta}}$ for the fractional heat kernel $\mathcal{F}^{-1}(e^{-t|\xi|^{\beta}})$ is not smooth at $\xi=0$. Notably, Lemma \ref{multiplier theorem} does not require smoothness at $\xi=0$, which is advantageous for estimating the fractional heat kernel $e^{-t(-\Delta)^{\frac{\beta}{2}}}$. In this respect, we improved \cite{H. Kozono1,M.H. Yang,J. Zhang} where the smoothness at $\xi=0$ is needed.
\begin{lemma}\label{fractional heat kenel lemma}
Let $\vartheta \geq 0$, $0\leq\lambda<d$, $1 < p_{1} \leq p_{2} < \infty$, $s_{1} \leq s_{2}$, with $s_{1}, s_{2} \in \mathbb{R}$, and $1 \leq r \leq \infty$. We have the following estimates:
\begin{enumerate}
    \item[\rm(i)]
    \begin{equation}\label{fraction heat estimate1}
    \left\|(-\Delta)^{\frac{\vartheta}{2}}e^{-t(-\Delta)^{\frac{\beta}{2}}}f\right\|_{\mathcal{M}^{s_{2}}_{p_{2},\lambda}} \lesssim t^{-\frac{\vartheta+s_{2}-s_{1}}{\beta} - \frac{1}{\beta}\left(\frac{d-\lambda}{p_{1}} - \frac{d-\lambda}{p_{2}}\right)}\left\|f\right\|_{\mathcal{M}^{s_{1}}_{p_{1},\lambda}},
    \end{equation}
    \item[\rm(ii)]
    \begin{equation}\label{fraction heat estimate2}
    \left\|(-\Delta)^{\frac{\vartheta}{2}}e^{-t(-\Delta)^{\frac{\beta}{2}}}f\right\|_{\dot{\mathcal{N}}^{s_{2}}_{p_{2},\lambda,r}} \lesssim t^{-\frac{\vartheta+s_{2}-s_{1}}{\beta} - \frac{1}{\beta}\left(\frac{d-\lambda}{p_{1}} - \frac{d-\lambda}{p_{2}}\right)}\left\|f\right\|_{\dot{\mathcal{N}}^{s_{1}}_{p_{1},\lambda,r}},
    \end{equation}
    \item[\rm(iii)] if $s_{2} > s_{1}$,
    \begin{equation}\label{fraction heat estimate3}
    \left\|(-\Delta)^{\frac{\vartheta}{2}}e^{-t(-\Delta)^{\frac{\beta}{2}}}f\right\|_{\dot{\mathcal{N}}^{s_{2}}_{p_{2},\lambda,1}} \lesssim t^{-\frac{\vartheta+s_{2}-s_{1}}{\beta} - \frac{1}{\beta}\left(\frac{d-\lambda}{p_{1}} - \frac{d-\lambda}{p_{2}}\right)}\left\|f\right\|_{\dot{\mathcal{N}}^{s_{1}}_{p_{1},\lambda,\infty}}.
    \end{equation}
\end{enumerate}
\end{lemma}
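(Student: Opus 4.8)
The plan is to reduce all three estimates to a single Fourier-multiplier bound between Morrey spaces and then transfer it to the Besov--Morrey scale via the Littlewood--Paley decomposition. For (i), since $\|g\|_{\mathcal{M}^{s}_{p,\lambda}}=\|(-\Delta)^{s/2}g\|_{\mathcal{M}_{p,\lambda}}$, I would set $h=(-\Delta)^{s_1/2}f$, so that $\|h\|_{\mathcal{M}_{p_1,\lambda}}=\|f\|_{\mathcal{M}^{s_1}_{p_1,\lambda}}$ and the left-hand side of \eqref{fraction heat estimate1} equals $\|(-\Delta)^{\mu/2}e^{-t(-\Delta)^{\beta/2}}h\|_{\mathcal{M}_{p_2,\lambda}}$ with $\mu=\vartheta+s_2-s_1\geq 0$. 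Everything thus reduces to bounding the operator with symbol $m_t(\xi)=|\xi|^{\mu}e^{-t|\xi|^{\beta}}$ from $\mathcal{M}_{p_1,\lambda}$ to $\mathcal{M}_{p_2,\lambda}$.

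To prove this master bound I would factor $m_t(\xi)=|\xi|^{-\kappa}\big(|\xi|^{\mu+\kappa}e^{-t|\xi|^{\beta}}\big)$ with $\kappa=\frac{d-\lambda}{p_1}-\frac{d-\lambda}{p_2}\geq 0$. The factor $|\xi|^{-\kappa}$ is the symbol of the Riesz potential $I_{d-\kappa}$, which by Proposition \ref{some propesitions}(ii) maps $\mathcal{M}_{p_1,\lambda}\to\mathcal{M}_{p_2,\lambda}$ (the constraints $0<\kappa<\frac{d-\lambda}{p_1}$ follow automatically from $1<p_1\leq p_2<\infty$). For the remaining factor I would rescale: with $\eta=t^{1/\beta}\xi$ one has $|\xi|^{\mu+\kappa}e^{-t|\xi|^{\beta}}=t^{-(\mu+\kappa)/\beta}\psi(t^{1/\beta}\xi)$, where $\psi(\eta)=|\eta|^{\mu+\kappa}e^{-|\eta|^{\beta}}$. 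This $\psi$ satisfies the hypotheses of Lemma \ref{multiplier theorem} with $l=0$: near $\eta=0$ the bound $|\partial^{\gamma}\psi(\eta)|\lesssim|\eta|^{\mu+\kappa-|\gamma|}\lesssim|\eta|^{-|\gamma|}$ holds because $\mu+\kappa\geq 0$, while for large $|\eta|$ the factor $e^{-|\eta|^{\beta}}$ dominates every polynomial. Hence $\psi(D)$ is bounded on $\mathcal{M}_{p_1,\lambda}$; moreover, since the Morrey norm is dilation homogeneous, $\|f(a\cdot)\|_{p,\lambda}=a^{-(d-\lambda)/p}\|f\|_{p,\lambda}$, and $\psi(aD)=D_{1/a}\psi(D)D_{a}$ with $D_af=f(a\cdot)$, the dilated operator $\psi(t^{1/\beta}D)$ is bounded with the same norm uniformly in $t$. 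Combining both factors gives $\|m_t(D)h\|_{\mathcal{M}_{p_2,\lambda}}\lesssim t^{-(\mu+\kappa)/\beta}\|h\|_{\mathcal{M}_{p_1,\lambda}}$, and since $\tfrac{\mu+\kappa}{\beta}=\tfrac{\vartheta+s_2-s_1}{\beta}+\tfrac{1}{\beta}\big(\tfrac{d-\lambda}{p_1}-\tfrac{d-\lambda}{p_2}\big)$, this is exactly (i). I expect this to be the delicate step: absorbing the change of integrability $p_1\to p_2$ (which Lemma \ref{multiplier theorem} alone cannot produce) into the Riesz-potential factor, while keeping the non-smoothness of $e^{-t|\xi|^{\beta}}$ at the origin harmless.

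For (ii) and (iii) I would write $T_t=(-\Delta)^{\vartheta/2}e^{-t(-\Delta)^{\beta/2}}$ and pass to the blocks $\Delta_k$, with which $T_t$ commutes. Inserting a fattened projector $\widetilde\Delta_k$ and applying the master bound to the localized symbol $|\xi|^{\vartheta}e^{-t|\xi|^{\beta}}\widetilde\varphi_k(\xi)$ supported in $|\xi|\sim 2^{k}$, the rescaling $\rho_k=t^{1/\beta}2^{k}$ shows the block norm $\mathcal{M}_{p_1,\lambda}\to\mathcal{M}_{p_2,\lambda}$ is $\lesssim t^{-(\vartheta+\kappa)/\beta}g(\rho_k)$, where $g(\rho)\sim\rho^{\vartheta+\kappa}$ as $\rho\to0$ and decays rapidly as $\rho\to\infty$, so $g$ is bounded. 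Using $\|\Delta_kf\|_{p_1,\lambda}\leq 2^{-ks_1}\|f\|_{\dot{\mathcal{N}}^{s_1}_{p_1,\lambda,r}}$ inside the $\ell^{r}$ norm and $2^{ks_2}g(\rho_k)=2^{ks_1}t^{-(s_2-s_1)/\beta}h(\rho_k)$ with $h(\rho)=\rho^{s_2-s_1}g(\rho)$, estimate (ii) follows by pulling out $\sup_k h(\rho_k)<\infty$. For (iii) the same block bound is summed in $\ell^{1}$,
\begin{equation*}
\|T_t f\|_{\dot{\mathcal{N}}^{s_2}_{p_2,\lambda,1}}\lesssim t^{-(\vartheta+\kappa)/\beta}\Big(\sum_{k\in\mathbb{Z}}2^{k(s_2-s_1)}g(\rho_k)\Big)\|f\|_{\dot{\mathcal{N}}^{s_1}_{p_1,\lambda,\infty}}=t^{-(\vartheta+\kappa)/\beta}t^{-(s_2-s_1)/\beta}\Big(\sum_{k\in\mathbb{Z}}h(\rho_k)\Big)\|f\|_{\dot{\mathcal{N}}^{s_1}_{p_1,\lambda,\infty}}.
\end{equation*}
Since $s_2>s_1$ and $\vartheta+\kappa\geq0$, the summand $h(\rho_k)\sim\rho_k^{\,s_2-s_1+\vartheta+\kappa}$ decays geometrically as $k\to-\infty$ and rapidly as $k\to+\infty$, so $\sum_k h(\rho_k)$ converges uniformly in $t$; collecting the powers of $t$ yields the exponent of (iii). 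The hypothesis $s_2>s_1$ enters precisely here, making the low-frequency geometric series summable, and this summation gain is what upgrades the summability index from $\infty$ to $1$. (Alternatively, (ii) follows from (i) by real interpolation through Proposition \ref{some propesitions}(i), since the $t$-exponent is invariant under a common shift $s_i\mapsto s_i\pm\epsilon$.)
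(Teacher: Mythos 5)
Your proof is correct, and for part (i) it is essentially the paper's argument in different packaging: both verify the Mikhlin-type bound $\bigl|\partial^{\gamma}\bigl(|\xi|^{\vartheta}e^{-|\xi|^{\beta}}\bigr)\bigr|\lesssim|\xi|^{-|\gamma|}$ by the same Leibniz computation, extract the power of $t$ from the dilation $\xi\mapsto t^{1/\beta}\xi$ and the scaling of the Morrey norm, and absorb the change of integrability $p_{1}\to p_{2}$ into a Riesz-potential step — you apply Proposition \ref{some propesitions}(ii) directly to the factor $|\xi|^{-\kappa}$, while the paper equivalently invokes the Sobolev--Morrey embedding $\mathcal{M}^{l}_{p_{1},\lambda}\hookrightarrow\mathcal{M}^{0}_{p_{2},\lambda}$ with $l=\frac{d-\lambda}{p_{1}}-\frac{d-\lambda}{p_{2}}$; these are the same fact. (One cosmetic caveat: when $p_{1}=p_{2}$ you get $\kappa=0$, so the Riesz factor must simply be omitted rather than claimed to satisfy $0<\kappa$.) Where you genuinely diverge is in (ii) and (iii): the paper never touches the Littlewood--Paley blocks and instead runs real interpolation twice — (ii) follows by interpolating the Sobolev--Morrey bounds through $\bigl(\mathcal{M}^{s_{21}}_{p_{2},\lambda},\mathcal{M}^{s_{22}}_{p_{2},\lambda}\bigr)_{\theta,r}=\dot{\mathcal{N}}^{s_{2}}_{p_{2},\lambda,r}$, and (iii) by applying (ii) at the two smoothness levels $\frac{3s_{2}-s_{1}}{2}$ and $\frac{s_{2}+s_{1}}{2}$ with $r=\infty$ and using $\bigl(\dot{\mathcal{N}}^{\frac{3s_{2}-s_{1}}{2}}_{p_{2},\lambda,\infty},\dot{\mathcal{N}}^{\frac{s_{2}+s_{1}}{2}}_{p_{2},\lambda,\infty}\bigr)_{\frac12,1}=\dot{\mathcal{N}}^{s_{2}}_{p_{2},\lambda,1}$, which is where $s_{2}>s_{1}$ enters for them. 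Your block-by-block argument with the profile $h(\rho)=\rho^{s_{2}-s_{1}+\vartheta+\kappa}e^{-c\rho^{\beta}}$ is a valid substitute: it buys transparency — the upgrade from $\ell^{\infty}$ to $\ell^{1}$ is visibly the convergence of the low-frequency geometric series, uniformly in $t$ — at the cost of a slightly longer verification that the localized symbols satisfy the multiplier hypotheses uniformly; the paper's route is shorter because Proposition \ref{some propesitions}(i) hands it the interpolation identities for free. Your parenthetical remark that (ii) also follows from (i) by interpolation is exactly the paper's proof of (ii).
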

\begin{proof}
Using induction, for every fixed $\gamma \in \mathbb{N}^{d} \cup \{0\}$ and $\xi \neq 0$, we obtain
\[
\frac{\partial^{|\gamma|}}{\partial \xi^{\gamma}}e^{-|\xi|^{\beta}} = e^{-|\xi|^{\beta}}|\xi|^{-2|\gamma|} \sum_{j=1}^{|\gamma|}P_{\gamma,j}(\xi)|\xi|^{j\beta},
\]
where $P_{\gamma,j}$ is a homogeneous polynomial of degree $|\gamma|$ for $j=1,2,\ldots,|\gamma|$. Therefore, by the Leibniz formula,
\begin{align*}
\left|\frac{\partial^{|\gamma|}}{\partial \xi^{\gamma}}\left(|\xi|^{\vartheta}e^{-|\xi|^{\beta}}\right)\right| &= \left|\sum_{a+b=\gamma}C_{\gamma}^{a} \left(\partial^{a}|\xi|^{\vartheta}\right)\sum_{j=1}^{|b|}e^{-|\xi|^{\beta}}|\xi|^{-2|b|}P_{b,j}(\xi)|\xi|^{j\beta}\right|\\
&\lesssim \sum_{a+b=\gamma}C_{\gamma}^{a} |\xi|^{-|a|-|b|}\sum_{j=1}^{|b|}e^{-|\xi|^{\beta}}|\xi|^{-|b|}P_{b,j}(\xi)|\xi|^{\vartheta + j\beta}\\
&\lesssim |\xi|^{-|\gamma|}.
\end{align*}
Note that with the change of variables $y' = y / t^{\frac{1}{\beta}}$, we have $K_{t} \ast f_{t^{-\frac{1}{\beta}}}(\cdot) = \left(K \ast f\right)_{t^{-\frac{1}{\beta}}}(\cdot)$ and $K_{t, \vartheta} \ast f_{t^{-\frac{1}{\beta}}}(\cdot) = t^{-\frac{\vartheta}{\beta}}\left(K_{\vartheta} \ast f\right)_{t^{-\frac{1}{\beta}}}(\cdot)$. Thus, by Lemma \ref{multiplier theorem}, we get
\begin{align*}
\left\|(-\Delta)^{\frac{\vartheta}{2}}e^{-t(-\Delta)^{\frac{\beta}{2}}}f\right\|_{\mathcal{M}^{s_{2}}_{p_{2},\lambda}} &= \left\|K_{\vartheta, t} \ast f\right\|_{\mathcal{M}^{s_{2}}_{p_{2},\lambda}}\\
&= \left\|t^{-\frac{\vartheta}{\beta}}\left(K_{\vartheta} \ast f_{t^{\frac{1}{\beta}}}\right)_{t^{-\frac{1}{\beta}}}\right\|_{\mathcal{M}^{s_{2}}_{p_{2},\lambda}}\\
&\lesssim t^{-\frac{\vartheta}{\beta}}t^{-\frac{1}{\beta}\left(s_{2} - \frac{d-\lambda}{p_{2}}\right)}\left\|f_{t^{\frac{1}{\beta}}}(\cdot)\right\|_{\mathcal{M}^{s_{2}}_{p_{2},\lambda}}\\
&\lesssim t^{-\frac{\vartheta}{\beta}}\left\|f\right\|_{\mathcal{M}^{s_{2}}_{p_{2},\lambda}}.
\end{align*}
Therefore, combining Proposition \ref{some propesitions}, we obtain
\begin{align*}
\left\|(-\Delta)^{\frac{\vartheta}{2}}e^{-t(-\Delta)^{\frac{\beta}{2}}}f\right\|_{\mathcal{M}^{s_{2}}_{p_{2},\lambda}} &= \left\|(-\Delta)^{\frac{\vartheta + s_{2}}{2}}e^{-t(-\Delta)^{\frac{\beta}{2}}}f\right\|_{\mathcal{M}^{0}_{p_{2},\lambda}}\\
&\lesssim\left\|(-\Delta)^{\frac{\vartheta + s_{2}}{2}}e^{-t(-\Delta)^{\frac{\beta}{2}}}f\right\|_{\mathcal{M}^{l}_{p_{1},\lambda}}\\
&\lesssim \left\|(-\Delta)^{\frac{\vartheta + s_{2} + l - s_{1}}{2}}e^{-t(-\Delta)^{\frac{\beta}{2}}}f\right\|_{\mathcal{M}^{s_{1}}_{p_{1},\lambda}}\\
&\lesssim t^{-\frac{\vartheta + s_{2} - s_{1}}{\beta} - \frac{1}{\beta}\left(\frac{d-\lambda}{p_{1}} - \frac{d-\lambda}{p_{2}}\right)}\left\|f\right\|_{\mathcal{M}^{s_{1}}_{p_{1},\lambda}},
\end{align*}
where $l = \frac{d-\lambda}{p_{1}} - \frac{d-\lambda}{p_{2}}$. Using real interpolation,
\[
\dot{\mathcal{N}}^{s_{2}}_{p_{2},\lambda,r} = \left(\mathcal{M}_{p_{2},\lambda}^{s_{21}}, \mathcal{M}_{p_{2},\lambda}^{s_{22}}\right)_{\theta, r}, \quad \dot{\mathcal{N}}^{s_{1}}_{p_{1},\lambda,r} = \left(\mathcal{M}_{p_{1},\lambda}^{s_{11}}, \mathcal{M}_{p_{1},\lambda}^{s_{12}}\right)_{\theta, r},
\]
for $s_{2} = \theta s_{21} + (1 - \theta)s_{22}$, $s_{21} \neq s_{22}$, and $s_{1} = \theta s_{11} + (1 - \theta)s_{12}$, $s_{11} \neq s_{12}$.

Thus, we have
\[
\left\|(-\Delta)^{\frac{\vartheta}{2}}e^{-t(-\Delta)^{\frac{\beta}{2}}}f\right\|_{\dot{\mathcal{N}}^{s_{2}}_{p_{2},\lambda,r}} \lesssim A_{1}^{\theta}A_{2}^{1 - \theta}\left\|f\right\|_{\dot{\mathcal{N}}^{s_{1}}_{p_{1},\lambda,r}} \lesssim t^{-\frac{\vartheta + s_{2} - s_{1}}{\beta} - \frac{1}{\beta}\left(\frac{d-\lambda}{p_{1}} - \frac{d-\lambda}{p_{2}}\right)},
\]
where
\begin{align*}
A_{1} := &\left\|(-\Delta)^{\frac{\vartheta}{2}}e^{-t(-\Delta)^{\frac{\beta}{2}}}\right\|_{\mathcal{M}^{s_{11}}_{p_{1},\lambda} \rightarrow \mathcal{M}^{s_{21}}_{p_{2},\lambda}} \lesssim t^{-\frac{\vartheta + s_{21} - s_{11}}{\beta} - \frac{1}{\beta}\left(\frac{d-\lambda}{p_{1}} - \frac{d-\lambda}{p_{2}}\right)},\\
A_{2} := &\left\|(-\Delta)^{\frac{\vartheta}{2}}e^{-t(-\Delta)^{\frac{\beta}{2}}}\right\|_{\mathcal{M}^{s_{12}}_{p_{1},\lambda} \rightarrow \mathcal{M}^{s_{22}}_{p_{2},\lambda}} \lesssim t^{-\frac{\vartheta + s_{22} - s_{12}}{\beta} - \frac{1}{\beta}\left(\frac{d-\lambda}{p_{1}} - \frac{d-\lambda}{p_{2}}\right)}.
\end{align*}
If $s_{2} > s_{1}$, using the above estimates, we have
\begin{align*}
\left\|(-\Delta)^{\frac{\vartheta}{2}}e^{-t(-\Delta)^{\frac{\beta}{2}}}f\right\|_{\dot{\mathcal{N}}^{\frac{3s_{2} - s_{1}}{2}}_{p_{2},\lambda,\infty}} &\lesssim t^{-\frac{\vartheta + \frac{3}{2}(s_{2} - s_{1})}{\beta} - \frac{1}{\beta}\left(\frac{d-\lambda}{p_{1}} - \frac{d-\lambda}{p_{2}}\right)}\left\|f\right\|_{\dot{\mathcal{N}}^{s_{1}}_{p_{1},\lambda,\infty}},\\
\left\|(-\Delta)^{\frac{\vartheta}{2}}e^{-t(-\Delta)^{\frac{\beta}{2}}}f\right\|_{\dot{\mathcal{N}}^{\frac{s_{2} + s_{1}}{2}}_{p_{2},\lambda,\infty}} &\lesssim t^{-\frac{\vartheta + \frac{1}{2}(s_{2} - s_{1})}{\beta} - \frac{1}{\beta}\left(\frac{d-\lambda}{p_{1}} - \frac{d-\lambda}{p_{2}}\right)}\left\|f\right\|_{\dot{\mathcal{N}}^{s_{1}}_{p_{1},\lambda,\infty}},
\end{align*}
noting that $\left(\dot{\mathcal{N}}^{\frac{3s_{2} - s_{1}}{2}}_{p_{2},\lambda,\infty}, \dot{\mathcal{N}}^{\frac{s_{2} + s_{1}}{2}}_{p_{2},\lambda,\infty}\right)_{\frac{1}{2},1} = \dot{\mathcal{N}}^{s_{2}}_{p_{2},\lambda,1}$. Therefore, we obtain
\[
\left\|(-\Delta)^{\frac{\vartheta}{2}}e^{-t(-\Delta)^{\frac{\beta}{2}}}f\right\|_{\dot{\mathcal{N}}^{s_{2}}_{p_{2},\lambda,1}} \lesssim t^{-\frac{\vartheta + s_{2} - s_{1}}{\beta} - \frac{1}{\beta}\left(\frac{d-\lambda}{p_{1}} - \frac{d-\lambda}{p_{2}}\right)}\left\|f\right\|_{\dot{\mathcal{N}}^{s_{1}}_{p_{1},\lambda,\infty}}.
\]
The proof is completed.
\end{proof}
\begin{lemma}\label{Mittag estimats lemma}
Given the conditions of Lemma \ref{fractional heat kenel lemma}, we obtain the following estimates.
\begin{enumerate}
\item[\rm(i)] If $\vartheta+s_{2}-s_{1}+\frac{d-\lambda}{p_{1}}-\frac{d-\lambda}{p_{2}}<\beta$,
\begin{align}\label{estimate1}
\left\|(-\Delta)^{\frac{\vartheta}{2}}\mathcal{S}_{\alpha,\beta}(t)f
\right\|_{\mathcal{M}^{s_{2}}_{p_{2},\lambda}} \lesssim t^{-\frac{\alpha}{\beta}(\vartheta+s_{2}-s_{1}) - \frac{\alpha}{\beta}\left(\frac{d-\lambda}{p_{1}} - \frac{d-\lambda}{p_{2}}\right)}\left\|f\right\|_{\mathcal{M}^{s_{1}}_{p_{1},\lambda}}.
\end{align}

\item[\rm(ii)] If $\vartheta+s_{2}-s_{1}+\frac{d-\lambda}{p_{1}}-\frac{d-\lambda}{p_{2}}<\beta$,
\begin{align}\label{estimate2}
\left\|(-\Delta)^{\frac{\vartheta}{2}}\mathcal{S}_{\alpha,\beta}(t)f
\right\|_{\dot{\mathcal{N}}^{s_{2}}_{p_{2},\lambda,r}} \lesssim t^{-\frac{\alpha}{\beta}(\vartheta+s_{2}-s_{1}) - \frac{\alpha}{\beta}\left(\frac{d-\lambda}{p_{1}} - \frac{d-\lambda}{p_{2}}\right)}\left\|f\right\|_{\dot{\mathcal{N}}^{s_{1}}_{p_{1},\lambda,r}}.
\end{align}

\item[\rm(iii)] If $\vartheta+s_{2}-s_{1}+\frac{d-\lambda}{p_{1}}-\frac{d-\lambda}{p_{2}}<\beta$ and $s_{2}>s_{1}$,
\begin{align}\label{estimate3}
\left\|(-\Delta)^{\frac{\vartheta}{2}}\mathcal{S}_{\alpha,\beta}(t)f
\right\|_{\dot{\mathcal{N}}^{s_{2}}_{p_{2},\lambda,1}} \lesssim t^{-\frac{\alpha}{\beta}(\vartheta+s_{2}-s_{1}) - \frac{\alpha}{\beta}\left(\frac{d-\lambda}{p_{1}} - \frac{d-\lambda}{p_{2}}\right)}\left\|f\right\|_{\dot{\mathcal{N}}^{s_{1}}_{p_{1},\lambda,\infty}}.
\end{align}

\item[\rm(iv)] If $\vartheta+s_{2}-s_{1}+\frac{d-\lambda}{p_{1}}-\frac{d-\lambda}{p_{2}}<2\beta$,
\begin{align}\label{estimate4}
\left\|(-\Delta)^{\frac{\vartheta}{2}}\mathcal{P}_{\alpha,\beta}(t)f
\right\|_{\mathcal{M}^{s_{2}}_{p_{2},\lambda}} \lesssim t^{-\frac{\alpha}{\beta}(\vartheta+s_{2}-s_{1}) - \frac{\alpha}{\beta}\left(\frac{d-\lambda}{p_{1}} - \frac{d-\lambda}{p_{2}}\right)}\left\|f\right\|_{\mathcal{M}^{s_{1}}_{p_{1},\lambda}}.
\end{align}

\item[\rm(v)] If $\vartheta+s_{2}-s_{1}+\frac{d-\lambda}{p_{1}}-\frac{d-\lambda}{p_{2}}<2\beta$,
\begin{align}\label{estimate5}
\left\|(-\Delta)^{\frac{\vartheta}{2}}\mathcal{P}_{\alpha,\beta}(t)f
\right\|_{\dot{\mathcal{N}}^{s_{2}}_{p_{2},\lambda,r}} \lesssim t^{-\frac{\alpha}{\beta}(\vartheta+s_{2}-s_{1}) - \frac{\alpha}{\beta}\left(\frac{d-\lambda}{p_{1}} - \frac{d-\lambda}{p_{2}}\right)}\left\|f\right\|_{\dot{\mathcal{N}}^{s_{1}}_{p_{1},\lambda,r}}.
\end{align}

\item[\rm(vi)] If $\vartheta+s_{2}-s_{1}+\frac{d-\lambda}{p_{1}}-\frac{d-\lambda}{p_{2}}<2\beta$ and $s_{2}>s_{1}$,
\begin{align}\label{estimate6}
\left\|(-\Delta)^{\frac{\vartheta}{2}}\mathcal{P}_{\alpha,\beta}(t)
\right\|_{\dot{\mathcal{N}}^{s_{2}}_{p_{2},\lambda,1}} \lesssim t^{-\frac{\alpha}{\beta}(\vartheta+s_{2}-s_{1}) - \frac{\alpha}{\beta}\left(\frac{d-\lambda}{p_{1}} - \frac{d-\lambda}{p_{2}}\right)}\left\|f\right\|_{\dot{\mathcal{N}}^{s_{1}}_{p_{1},\lambda,\infty}}.
\end{align}
\end{enumerate}
\end{lemma}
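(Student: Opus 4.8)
The plan is to reduce all six estimates to the fractional heat kernel bounds of Lemma~\ref{fractional heat kenel lemma} by exploiting the subordination representations \eqref{couple relation 1} and \eqref{couple relation 2}, and then to absorb the remaining $\theta$-integration using the moment identity $\int_{0}^{\infty}M_{\alpha}(\theta)\theta^{\rho}\,d\theta=\Gamma(1+\rho)/\Gamma(1+\alpha\rho)$ (valid for $\rho>-1$) recorded in Section~2. The same device handles every item; only the admissible range of the exponents differs between $\mathcal{S}_{\alpha,\beta}$ and $\mathcal{P}_{\alpha,\beta}$, and this difference will be traced back entirely to the extra factor $\alpha\theta$ appearing in \eqref{couple relation 2}.

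For the items (i)--(iii) concerning $\mathcal{S}_{\alpha,\beta}$, I would first commute $(-\Delta)^{\vartheta/2}$ through the integral in \eqref{couple relation 1} (both are Fourier multipliers) to write
\begin{equation*}
(-\Delta)^{\frac{\vartheta}{2}}\mathcal{S}_{\alpha,\beta}(t)f=\int_{0}^{\infty}M_{\alpha}(\theta)\,(-\Delta)^{\frac{\vartheta}{2}}e^{-\theta t^{\alpha}(-\Delta)^{\frac{\beta}{2}}}f\,d\theta.
\end{equation*}
Applying Minkowski's integral inequality to pull the relevant norm ($\mathcal{M}^{s_{2}}_{p_{2},\lambda}$ for (i), $\dot{\mathcal{N}}^{s_{2}}_{p_{2},\lambda,r}$ for (ii), $\dot{\mathcal{N}}^{s_{2}}_{p_{2},\lambda,1}$ for (iii)) inside the integral, and then invoking \eqref{fraction heat estimate1}, \eqref{fraction heat estimate2}, \eqref{fraction heat estimate3} respectively with $t$ replaced by $\theta t^{\alpha}$, each bound is controlled by
\begin{equation*}
\int_{0}^{\infty}M_{\alpha}(\theta)\,(\theta t^{\alpha})^{\rho}\,d\theta\,\|f\|=t^{\alpha\rho}\int_{0}^{\infty}M_{\alpha}(\theta)\,\theta^{\rho}\,d\theta\,\|f\|,\qquad \rho:=-\frac{1}{\beta}\Big(\vartheta+s_{2}-s_{1}+\frac{d-\lambda}{p_{1}}-\frac{d-\lambda}{p_{2}}\Big).
\end{equation*}
The moment identity applies precisely when $\rho>-1$, which is equivalent to the stated hypothesis $\vartheta+s_{2}-s_{1}+\tfrac{d-\lambda}{p_{1}}-\tfrac{d-\lambda}{p_{2}}<\beta$; then the $\theta$-integral equals $\Gamma(1+\rho)/\Gamma(1+\alpha\rho)<\infty$, and the surviving power is $t^{\alpha\rho}=t^{-\frac{\alpha}{\beta}(\vartheta+s_{2}-s_{1})-\frac{\alpha}{\beta}(\frac{d-\lambda}{p_{1}}-\frac{d-\lambda}{p_{2}})}$, which is exactly \eqref{estimate1}--\eqref{estimate3}.

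For the items (iv)--(vi) concerning $\mathcal{P}_{\alpha,\beta}$, I would run the identical argument starting from \eqref{couple relation 2}; the only change is that the weight is now $\alpha\theta M_{\alpha}(\theta)$, so after applying Lemma~\ref{fractional heat kenel lemma} the decisive integral becomes $\alpha\int_{0}^{\infty}M_{\alpha}(\theta)\,\theta^{\rho+1}\,d\theta=\alpha\,\Gamma(2+\rho)/\Gamma(1+\alpha(\rho+1))$. This is finite exactly when $\rho+1>-1$, i.e.\ $\rho>-2$, which translates to the relaxed hypothesis $\vartheta+s_{2}-s_{1}+\tfrac{d-\lambda}{p_{1}}-\tfrac{d-\lambda}{p_{2}}<2\beta$; the temporal power $t^{\alpha\rho}$ is unchanged, yielding \eqref{estimate4}--\eqref{estimate6}. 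Thus the extra $\theta$ factor is precisely what widens the admissible threshold from $\beta$ to $2\beta$.

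The step requiring the most care is the passage of the norm inside the $\theta$-integral together with the convergence of the resulting moment integral: one must check that the integrand $M_{\alpha}(\theta)(\theta t^{\alpha})^{\rho}$ is integrable near $\theta=0$ (this is where the borderline conditions $\rho>-1$, resp.\ $\rho>-2$, are genuinely used, since the heat kernel factor blows up like $\theta^{\rho}$ there) and near $\theta=\infty$ (controlled by the super-exponential decay of $M_{\alpha}$). For the interpolation-type norms this also implicitly relies on the real-interpolation structure already built into the proof of Lemma~\ref{fractional heat kenel lemma}, so no new harmonic-analysis input beyond Minkowski's inequality and the Mainardi--Wright moment formula is needed.
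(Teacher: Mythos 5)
Your proposal is correct and follows essentially the same route as the paper: subordination via \eqref{couple relation 1}--\eqref{couple relation 2}, Minkowski's inequality to pass the norm inside the $\theta$-integral, the heat kernel bounds of Lemma \ref{fractional heat kenel lemma} at time $\theta t^{\alpha}$, and the Mainardi--Wright moment identity to evaluate the resulting $\theta$-integral, with the thresholds $\beta$ and $2\beta$ arising exactly as you describe from the integrability conditions $\rho>-1$ and $\rho>-2$. The paper writes out only (i)--(iii) and leaves (iv)--(vi) to "similar reasoning," which your treatment of the extra factor $\alpha\theta$ makes explicit.
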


\begin{proof}
We only prove \eqref{estimate1}-\eqref{estimate3}; the proofs for \eqref{estimate4}-\eqref{estimate6} follow by a similar reasoning. By using \eqref{fraction heat estimate1} and \eqref{couple relation 1}, we have
\begin{align*}
&\left\|(-\Delta)^{\frac{\vartheta}{2}}\mathcal{S}_{\alpha,\beta}(t)f
\right\|_{\mathcal{M}^{s_{2}}_{p_{2},\lambda}}\\
&\leq\int_{0}^{\infty}M_{\alpha}(\theta)
\bigg\|(-\Delta)^{\frac{\vartheta}{2}}e^{-\theta t^{\alpha}(-\Delta)^{\frac{\beta}{2}}}f\bigg\|_{\mathcal{M}^{s_{2}}_{p_{2},\lambda}}d\theta\\
&\leq\int_{0}^{\infty}M_{\alpha}(\theta)\theta^{-\frac{1}{\beta}
(\vartheta+s_{2}-s_{1}+\frac{d-\lambda}{p_{1}}-\frac{d-\lambda}{p_{2}})}d\theta t^{-\frac{\alpha}{\beta}(\vartheta+s_{2}-s_{1}) - \frac{\alpha}{\beta}\left(\frac{d-\lambda}{p_{1}} - \frac{d-\lambda}{p_{2}}\right)}\left\|f\right\|_{\mathcal{M}^{s_{1}}_{p_{1},\lambda}}\\
&=\frac{\Gamma(1-\frac{1}{\beta}
(\vartheta+s_{2}-s_{1}+\frac{d-\lambda}{p_{1}}-\frac{d-\lambda}{p_{2}}))}{\Gamma(1-\frac{\alpha}{\beta}
(\vartheta+s_{2}-s_{1}+\frac{d-\lambda}{p_{1}}-\frac{d-\lambda}{p_{2}}))}t^{-\frac{\alpha}{\beta}(\vartheta+s_{2}-s_{1}) - \frac{\alpha}{\beta}\left(\frac{d-\lambda}{p_{1}} - \frac{d-\lambda}{p_{2}}\right)}\left\|f\right\|_{\mathcal{M}^{s_{1}}_{p_{1},\lambda}}.
\end{align*}
Using \eqref{couple relation 1}, \eqref{fraction heat estimate2}, and \eqref{fraction heat estimate3}, we obtain
\begin{align*}
&\left\|(-\Delta)^{\frac{\vartheta}{2}}\mathcal{S}_{\alpha,\beta}(t)f
\right\|_{\dot{\mathcal{N}}^{s_{2}}_{p_{2},\lambda,r}}\\
&\leq\int_{0}^{\infty}M_{\alpha}(\theta)
\bigg\|(-\Delta)^{\frac{\vartheta}{2}}e^{-\theta t^{\alpha}(-\Delta)^{\frac{\beta}{2}}}f\bigg\|_{\dot{\mathcal{N}}^{s_{2}}_{p_{2},\lambda,r}}d\theta\\
&\leq\int_{0}^{\infty}M_{\alpha}(\theta)\theta^{-\frac{1}{\beta}
(\vartheta+s_{2}-s_{1}+\frac{d-\lambda}{p_{1}}-\frac{d-\lambda}{p_{2}})}d\theta t^{-\frac{\alpha}{\beta}(\vartheta+s_{2}-s_{1}) - \frac{\alpha}{\beta}\left(\frac{d-\lambda}{p_{1}} - \frac{d-\lambda}{p_{2}}\right)}\left\|f\right\|_{\dot{\mathcal{N}}^{s_{1}}_{p_{1},\lambda,r}}\\
&=\frac{\Gamma(1-\frac{1}{\beta}
(\vartheta+s_{2}-s_{1}+\frac{d-\lambda}{p_{1}}-\frac{d-\lambda}{p_{2}}))}{\Gamma(1-\frac{\alpha}{\beta}
(\vartheta+s_{2}-s_{1}+\frac{d-\lambda}{p_{1}}-\frac{d-\lambda}{p_{2}}))}t^{-\frac{\alpha}{\beta}(\vartheta+s_{2}-s_{1}) - \frac{\alpha}{\beta}\left(\frac{d-\lambda}{p_{1}} - \frac{d-\lambda}{p_{2}}\right)}\left\|f\right\|_{\dot{\mathcal{N}}^{s_{1}}_{p_{1},\lambda,r}}.
\end{align*}
Similarly, we can also have,
$$
\left\|(-\Delta)^{\frac{\vartheta}{2}}\mathcal{S}_{\alpha,\beta}(t)f
\right\|_{\dot{\mathcal{N}}^{s_{2}}_{p_{2},\lambda,1}}\leq t^{-\frac{\alpha}{\beta}(\vartheta+s_{2}-s_{1}) - \frac{\alpha}{\beta}\left(\frac{d-\lambda}{p_{1}} - \frac{d-\lambda}{p_{2}}\right)}\left\|f\right\|_{\dot{\mathcal{N}}^{s_{1}}_{p_{1},\lambda,\infty}}.
$$
The proof is thus completed.
\end{proof}
In light of the estimations mentioned above, we proceed to demonstrate the well-posedness of system \eqref{Eq:K-S-N-S} in Besov-Morrey spaces. A common phenomenon in many partial differential equations (PDEs) is the emergence of different scales of time and space, while certain phenomena seem to remain invariant under the choice of these scales. Mathematically, the scale invariance properties of nonlinear differential equations are crucial tools for analyzing solutions, especially when the nonlinearity is a homogeneous function. Our primary approach is founded on Kato's methods; that is, we consider the contraction mapping principle in certain Banach spaces that exhibit scale invariance, as discussed in \cite{T. Kato, B.H. H}.

Let
\begin{align}\label{canshufanwei1}
\begin{cases}
r_{1}>\frac{d-\lambda}{\beta-1}, r_{2}>\frac{d-\lambda}{2\beta-2}, r_{3}>d-\lambda, q_{1}>\frac{d-\lambda}{\beta-1}, \frac{d-\lambda}{2\beta-2}<q_{2}<\frac{d-\lambda}{\beta-1}, d-\lambda<q_{3}<\frac{d-\lambda}{2-\beta},\\
1<r_{j}\leq q_{j},j=1,2,3,\\
\frac{2\beta-2}{d-\lambda}<\frac{1}{q_{1}}+\frac{1}{q_{2}}<\frac{3\beta-3}{d-\lambda},
\frac{1}{d-\lambda}<\frac{1}{q_{2}}+\frac{1}{q_{3}}<\frac{2\beta-1}{d-\lambda},\\
\frac{1}{q_{2}}-\frac{1}{q_{1}}<\frac{\beta-1}{d-\lambda},\frac{1}{q_{2}}-\frac{1}{q_{3}}<\frac{2\beta-3}{d-\lambda},
\end{cases}
\end{align}
and
\begin{align}\label{canshufanwei2}
\begin{cases}
\beta_{1}=\beta-1-\frac{d-\lambda}{r_{1}},\chi_{1}=2\alpha-\frac{2\alpha}{\beta}-\frac{2\alpha(d-\lambda)}{\beta q_{1}},\\
\beta_{2}=2\beta-2-\frac{d-\lambda}{r_{2}},\chi_{2}=4\alpha-\frac{4\alpha}{\beta}-\frac{2\alpha(d-\lambda)}{\beta q_{2}},\\
\beta_{3}=1-\frac{d-\lambda}{r_{3}},\chi_{3}=\frac{2\alpha}{\beta}-\frac{2\alpha(d-\lambda)}{\beta q_{3}}.
\end{cases}
\end{align}
Let $BC((0,\infty);X)$ denote the set of bounded functions from $(0,\infty)$ to $X$. Define a topological vector space $\mathbb{F}=\mathbb{Y}_{1}\times\mathbb{Y}_{2}\times\mathbb{Y}_{3}$ endowed with the product norm
\[
\big\|\left(u,v,w\right)\big\|_{\mathbb{F}}=\big\|u\big\|_{\mathbb{Y}_{1}}+\big\|v\big\|_{\mathbb{Y}_{2}}
+\big\|w\big\|_{\mathbb{Y}_{3}},
\]
where
\begin{align*}
\mathbb{Y}_{1}&=\left\{u\,|\,\text{ div } u=0,u\in BC\left((0,\infty);\dot{\mathcal{N}}^{-\beta_{1}}_{r_{1},\lambda,\infty}\right),
t^{\frac{\chi_{1}}{2}}u\in BC\left((0,\infty);\mathcal{M}_{q_{1},\lambda}\right)\right\},\\
\mathbb{Y}_{2}&=\left\{v\,|\,v\in BC\left((0,\infty);\dot{\mathcal{N}}^{-\beta_{2}}_{r_{2},\lambda,\infty}\right),
t^{\frac{\chi_{2}}{2}}v\in BC\left((0,\infty);\mathcal{M}_{q_{2},\lambda}\right)\right\},\\
\mathbb{Y}_{3}&=\left\{w\,|\,\nabla c\in BC\left((0,\infty);\dot{\mathcal{N}}^{-\beta_{3}}_{r_{3},\lambda,\infty}\right),
t^{\frac{\chi_{3}}{2}}\nabla w\in BC\left((0,\infty);\mathcal{M}_{q_{3},\lambda}\right),w\in BC\left((0,\infty);\mathcal{M}_{\frac{d-\lambda}{2-\beta},\lambda}^{2-\beta}\right)\right\},
\end{align*}
and the norms are defined as
\begin{align*}
\big\|u\big\|_{\mathbb{Y}_{1}}&=\sup_{t>0}\big\|u(t)\big\|_{\dot{\mathcal{N}}^{-\beta_{1}}_{r_{1},\lambda,\infty}}+
\sup_{t>0}\left(t^{\frac{\chi_{1}}{2}}\big\|u(t)\big\|_{\mathcal{M}_{q_{1},\lambda}}\right),\\
\big\|v\big\|_{\mathbb{Y}_{2}}&=\sup_{t>0}\big\|v(t)\big\|_{\dot{\mathcal{N}}^{-\beta_{2}}_{r_{2},\lambda,\infty}}+
\sup_{t>0}\left(t^{\frac{\chi_{2}}{2}}\big\|v(t)\big\|_{\mathcal{M}_{q_{2},\lambda}}\right),\\
\big\|w\big\|_{\mathbb{Y}_{3}}&=\sup_{t>0}\big\|\nabla w(t)\big\|_{\dot{\mathcal{N}}^{-\beta_{3}}_{r_{3},\lambda,\infty}}+
\sup_{t>0}\left(t^{\frac{\chi_{3}}{2}}\big\|\nabla w(t)\big\|_{\mathcal{M}_{q_{3},\lambda}}\right)+\sup_{t>0}\big\| w(t)\big\|_{\mathcal{M}_{\frac{d-\lambda}{2-\beta},\lambda}^{2-\beta}}.
\end{align*}

For $(u,v,w)\in\mathbb{F}$, we define map
$$
\Phi(u,v,w)=\big(\Phi_{1}(u,v,w),\Phi_{2}(u,v,w),\Phi_{3}(u,v,w)\big),
$$
where
\begin{align*}
\begin{cases}
\Phi_{1}(u,v,w) = \mathcal{S}_{\alpha,\beta}(t)u_{0} - \int_{0}^{t}(t-\theta)^{\alpha-1}\mathcal{P}_{\alpha,\beta}(t-\theta)\mathbb{P}(u\cdot\nabla u + n\nabla\phi) \, d\theta,\\
\Phi_{2}(u,v,w) = \mathcal{S}_{\alpha,\beta}(t)v_{0}- \int_{0}^{t}(t-\theta)^{\alpha-1}\mathcal{P}_{\alpha,\beta}(t-\theta)(u\cdot\nabla v + \nabla\cdot(v\cdot\nabla B(w))) \, d\theta,\\
\Phi_{3}(u,v,w) = \mathcal{S}_{\alpha,\beta}(t)w_{0} - \int_{0}^{t}(t-\theta)^{\alpha-1}\mathcal{P}_{\alpha,\beta}(t-\theta)(u\cdot\nabla w + ((-\Delta)^{\frac{2-\beta}{2}}w)v) \, d\theta,
\end{cases}
\end{align*}
It is also noteworthy that the Riesz transform $R_{j}$ is bounded on the Morrey space $\mathcal{M}_{p,\lambda}$ for $1<p<\infty$, as demonstrated by the Calder\'on-Zygmund operator. This fact can be found in the reference \cite{M. Taylor}. Consequently, the Leray projection operator is also bounded on $\mathcal{M}_{p,\lambda}$.
\begin{lemma}\label{selfmap}
Assuming conditions \eqref{canshufanwei1} and \eqref{canshufanwei2} are satisfied, and there exists a small positive constant $\kappa>0$ such that
\[
\big\|(u_{0},v_{0},\nabla w_{0})\big\|_{\dot{\mathcal{N}}^{-\beta_{1}}_{r_{1},\lambda,\infty}
\times\dot{\mathcal{N}}^{-\beta_{2}}_{r_{2},\lambda,\infty}\times\dot{\mathcal{N}}^{-\beta_{3}}_{r_{3},\lambda,\infty}}
+\big\|w_{0}\big\|_{\mathcal{M}^{2-\beta}_{\frac{d-\lambda}{2-\beta},\lambda}}+\big\|\nabla\phi\big\|_{d-\lambda,\lambda}\leq\kappa,
\]
for $(u,v,w)\in\mathbb{F}_{\kappa}$, it follows that $\Phi(u,v,w)\in\mathbb{F}_{\kappa}$, where
\[
\mathbb{F}_{\kappa}=\left\{(u,v,w)\in\mathbb{F}:\big\|u\big\|_{\mathbb{Y}_{1}}+\big\|v\big\|_{\mathbb{Y}_{2}}
+\big\|w\big\|_{\mathbb{Y}_{3}}\lesssim\kappa\right\}.
\]
\end{lemma}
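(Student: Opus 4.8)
The plan is to verify $\Phi(\mathbb{F}_{\kappa})\subseteq\mathbb{F}_{\kappa}$ by bounding, component by component, the linear and Duhamel (nonlinear) parts of each $\Phi_{i}$ in every piece of the norm $\|\cdot\|_{\mathbb{Y}_{i}}$, and then assembling them into an estimate of the schematic form $\|\Phi(u,v,w)\|_{\mathbb{F}}\lesssim\kappa+\|(u,v,w)\|_{\mathbb{F}}^{2}$. Because $(u,v,w)\in\mathbb{F}_{\kappa}$ gives $\|(u,v,w)\|_{\mathbb{F}}\lesssim\kappa$, the quadratic contribution is $\lesssim\kappa^{2}$; choosing $\kappa$ small enough then yields $\|\Phi(u,v,w)\|_{\mathbb{F}}\lesssim\kappa$, that is $\Phi(u,v,w)\in\mathbb{F}_{\kappa}$.

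For the linear parts $\mathcal{S}_{\alpha,\beta}(t)u_{0}$, $\mathcal{S}_{\alpha,\beta}(t)v_{0}$, $\mathcal{S}_{\alpha,\beta}(t)w_{0}$ I apply Lemma \ref{Mittag estimats lemma}(i)--(ii). In the unweighted Besov-Morrey pieces I take $\vartheta=0$, $p_{1}=p_{2}$, $s_{1}=s_{2}$ in \eqref{estimate2}, which gives a zero time-power and the bound by $\|u_{0}\|_{\dot{\mathcal{N}}^{-\beta_{1}}_{r_{1},\lambda,\infty}}$ and its analogues; in the time-weighted Morrey pieces I use \eqref{estimate1} to map $\dot{\mathcal{N}}^{-\beta_{i}}_{r_{i},\lambda,\infty}\to\mathcal{M}_{q_{i},\lambda}$, and a direct computation using \eqref{canshufanwei2} shows that the resulting time-exponent equals exactly $-\chi_{i}/2$, so the weight $t^{\chi_{i}/2}$ absorbs it. The third piece of $\|\cdot\|_{\mathbb{Y}_{3}}$, namely $\|w\|_{\mathcal{M}^{2-\beta}_{(d-\lambda)/(2-\beta),\lambda}}$, is handled by \eqref{estimate1} with $\vartheta=2-\beta$. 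Together with the smallness hypothesis, all linear contributions are $\lesssim\kappa$.

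The nonlinear parts follow a single recurring scheme. Each Duhamel integrand has the form $(t-\theta)^{\alpha-1}\mathcal{P}_{\alpha,\beta}(t-\theta)\mathcal{N}$ with $\mathcal{N}$ quadratic. First I put the convective terms in divergence form through $\nabla\cdot u=0$ (e.g. $u\cdot\nabla u=\nabla\cdot(u\otimes u)$), so that a derivative can be transferred to the operator. Next I apply Lemma \ref{Mittag estimats lemma}(iv)--(vi) with a suitable $\vartheta\ge0$ (the order of the derivative or fractional Laplacian acting on $\mathcal{N}$) to bound $\|\mathcal{P}_{\alpha,\beta}(t-\theta)\mathcal{N}\|$ in the target space by $(t-\theta)^{-\gamma}\|\mathcal{N}(\theta)\|_{\mathrm{source}}$. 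I then estimate $\|\mathcal{N}(\theta)\|_{\mathrm{source}}$ with the product law Proposition \ref{some propesitions}(iii), the embeddings Proposition \ref{some propesitions}(iv), and, for the chemotaxis couplings, the Hardy-Littlewood-Sobolev inequality Proposition \ref{some propesitions}(ii): writing $\nabla B(w)=(-\Delta)^{-(2-\beta)/2}\nabla w$ lets me bound $\|\nabla B(w)\|_{\mathcal{M}_{q,\lambda}}\lesssim\|\nabla w\|_{\mathcal{M}_{q_{3},\lambda}}$ with $\tfrac1q=\tfrac1{q_{3}}-\tfrac{2-\beta}{d-\lambda}$ (admissible precisely when $q_{3}<\tfrac{d-\lambda}{2-\beta}$), while $(-\Delta)^{(2-\beta)/2}w$ is controlled directly by $\|w\|_{\mathcal{M}^{2-\beta}_{(d-\lambda)/(2-\beta),\lambda}}$. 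Finally I insert the time weights, e.g. $\|u(\theta)\|_{\mathcal{M}_{q_{1},\lambda}}\lesssim\theta^{-\chi_{1}/2}\|u\|_{\mathbb{Y}_{1}}$, and evaluate the Beta-type integral $\int_{0}^{t}(t-\theta)^{\alpha-1-\gamma}\theta^{-\delta}\,d\theta$.

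The heart of the proof is the exponent bookkeeping in this last step. Convergence of the Beta integral requires $\alpha-\gamma>0$ at $\theta=t$ and $\delta<1$ at $\theta=0$, and the output power of $t$ must equal the target weight ($-\chi_{i}/2$ for the Morrey pieces, $0$ for the unweighted Besov-Morrey pieces). For the model term $u\cdot\nabla u$ measured in $\mathcal{M}_{q_{1},\lambda}$ one takes $\vartheta=1$ and source $\mathcal{M}_{q_{1}/2,\lambda}$, getting $\gamma=\tfrac{\alpha}{\beta}\bigl(1+\tfrac{d-\lambda}{q_{1}}\bigr)$ and $\delta=\chi_{1}$; then $\alpha-\gamma=\chi_{1}/2$, so the integral is $\sim t^{-\chi_{1}/2}$, with $\alpha-\gamma>0$ equivalent to the condition $q_{1}>\tfrac{d-\lambda}{\beta-1}$ listed in \eqref{canshufanwei1} and $\delta=\chi_{1}<1$ automatic from $0<\alpha<1$, $1<\beta<2$. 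The remaining terms $v\nabla\phi$, $u\cdot\nabla v$, $\nabla\cdot(v\nabla B(w))$, $u\cdot\nabla w$ and $((-\Delta)^{(2-\beta)/2}w)v$ are treated identically; each produces one of the constraints in \eqref{canshufanwei1}, and the matching of output and target time-powers is exactly what \eqref{canshufanwei2} encodes. Checking simultaneously that every application of Lemma \ref{Mittag estimats lemma} obeys its hypothesis $\vartheta+s_{2}-s_{1}+\tfrac{d-\lambda}{p_{1}}-\tfrac{d-\lambda}{p_{2}}<2\beta$, that the Hardy-Littlewood-Sobolev exponents lie in the admissible range of Proposition \ref{some propesitions}(ii), and that all Beta integrals converge is the main, though essentially technical, obstacle, and the entire system \eqref{canshufanwei1}--\eqref{canshufanwei2} is designed precisely so that all these requirements hold at once.
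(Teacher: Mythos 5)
Your proposal is correct and follows essentially the same route as the paper: rewrite the convective terms in divergence form, reduce each Besov--Morrey norm to a Morrey norm via the embeddings, apply the $\mathcal{P}_{\alpha,\beta}$/$\mathcal{S}_{\alpha,\beta}$ estimates of Lemma \ref{Mittag estimats lemma} together with the product law and Proposition \ref{some propesitions}(ii) for the $\nabla B(w)$ coupling, and close via Beta integrals whose convergence and time-homogeneity are exactly what \eqref{canshufanwei1}--\eqref{canshufanwei2} encode; your exponent check for the model term $u\cdot\nabla u$ matches the paper's computation. The only slight imprecision is citing \eqref{estimate1} for the map $\dot{\mathcal{N}}^{-\beta_{i}}_{r_{i},\lambda,\infty}\to\mathcal{M}_{q_{i},\lambda}$ (that estimate is Morrey-to-Morrey); the paper instead passes through $\dot{\mathcal{N}}^{0}_{q_{i},\lambda,1}\hookrightarrow\mathcal{M}_{q_{i},\lambda}$ and uses \eqref{estimate3}, which is the intended and correct fix.
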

\begin{proof}
\textbf{Step 1}. We estimate
\[
\left\|\int_{0}^{t}(t-\theta)^{\alpha-1}\mathcal{P}_{\alpha,\beta}(t-\theta)\mathbb{P}(\nabla u(s)\cdot u(s) + v(s)\nabla\phi) \, d\theta\right\|_{\mathbb{Y}_{1}}.
\]
Noting that $u(s)\cdot\nabla u(s)=\nabla\cdot\left(u(s)\otimes u(s)\right)$ and according to Proposition \ref{some propesitions} and Lemma \ref{Mittag estimats lemma},
\begin{align*}
&\left\|\int_{0}^{t}(t-\theta)^{\alpha-1}\mathcal{P}_{\alpha,\beta}(t-\theta)\mathbb{P}(\nabla u(\theta)\cdot u(\theta) + v(\theta)\nabla\phi) \, d\theta\right\|_{\dot{\mathcal{N}}^{-\beta_{1}}_{r_{1},\lambda,\infty}}\\ &\lesssim\left\|\int_{0}^{t}(t-\theta)^{\alpha-1}\mathcal{P}_{\alpha,\beta}(t-\theta)\mathbb{P}
(\nabla\cdot(u(\theta)\otimes u(\theta)) + v(\theta)\nabla\phi) \, d\theta\right\|_{\dot{\mathcal{N}}^{0}_{\frac{d-\lambda}{\beta-1},\lambda,\infty}}\\
&\lesssim\left\|\int_{0}^{t}(t-\theta)^{\alpha-1}\mathcal{P}_{\alpha,\beta}(t-\theta)\mathbb{P}
(\nabla\cdot(u(\theta)\otimes u(\theta)) + v(\theta)\nabla\phi) \, d\theta\right\|_{\mathcal{M}_{\frac{d-\lambda}{\beta-1},\lambda}}\\
&\lesssim\int_{0}^{t}(t-\theta)^{\alpha-1-\frac{\alpha}{\beta}-\frac{\alpha(d-\lambda)}{\beta}(\frac{2}{q_{1}}-\frac{\beta-1}{d-\lambda})}
\big\|u\otimes u\big\|_{\frac{q_{1}}{2},\lambda}d\theta\\
&\quad +(t-\theta)^{\alpha-1-\frac{\alpha(d-\lambda)}{\beta}(\frac{1}{q_{2}}+\frac{1}{d-\lambda}-\frac{\beta-1}{d-\lambda})}
\big\|v\nabla\phi\big\|_{\mathcal{M}_{\frac{q_{2}(d-\lambda)}{q_{2}+d-\lambda},\lambda}}d\theta\\
&\lesssim\int_{0}^{t}(t-\theta)^{2\alpha-\frac{2\alpha}{\beta}-\frac{2\alpha(d-\lambda)}{\beta q_{1}}-1}\big\|u\big\|_{q_{1},\lambda}^{2}ds+\int_{0}^{t}(t-\theta)^{2\alpha-\frac{2\alpha}{\beta}-\frac{\alpha(d-\lambda)}{\beta q_{2}}-1}\big\|v\big\|_{\mathcal{M}_{q_{2},\lambda}}\big\|\nabla\phi\big\|_{\mathcal{M}_{d-\lambda,\lambda}}d\theta\\
&\lesssim\int_{0}^{t}(t-\theta)^{2\alpha-\frac{2\alpha}{\beta}-\frac{2\alpha(d-\lambda)}{\beta q_{1}}-1}\theta^{-2\alpha+\frac{2\alpha}{\beta}+\frac{2\alpha(d-\lambda)}{\beta q_{1}}}d\theta\big[\sup_{t>0}\big(t^{\frac{\chi_{1}}{2}}\big\|u(t)\big\|_{\mathcal{M}_{q_{1},\lambda}}\big)\big]^{2}\\
&\quad +\int_{0}^{t}(t-\theta)^{2\alpha-\frac{2\alpha}{\beta}-\frac{\alpha(d-\lambda)}{\beta q_{2}}-1}\theta^{-2\alpha+\frac{2\alpha}{\beta}+\frac{\alpha(d-\lambda)}{\beta q_{2}}}d\theta\big(\sup_{t>0}t^{\frac{\chi_{2}}{2}}\big\|n(t)\big\|_{\mathcal{M}_{q_{2},\lambda}}\big)
\big\|\nabla\phi\big\|_{\mathcal{M}_{d-\lambda,\lambda}}\\
&\lesssim \big\|u\big\|_{\mathbb{Y}_{1}}^{2}+\big\|v\big\|_{\mathbb{Y}_{2}}\big\|\nabla\phi\big\|_{\mathcal{M}_{d-\lambda,\lambda}},
\end{align*}
here, we have employed the definition of the Beta function and, under conditions \eqref{canshufanwei1} and \eqref{canshufanwei2}, we notice that the inequalities $0 < 2\alpha - \frac{2\alpha}{\beta} - \frac{2\alpha(d-\lambda)}{\beta q_{1}} < 1$ and $0 < 2\alpha - \frac{2\alpha}{\beta} - \frac{\alpha(d-\lambda)}{\beta q_{2}} < 1$ hold.

Moreover,
\begin{align*}
&\left\|\int_{0}^{t}(t-\theta)^{\alpha-1}\mathcal{P}_{\alpha,\beta}(t-\theta)\mathbb{P}
(\nabla\cdot(u(\theta)\otimes u(\theta)) + v(\theta)\nabla\phi) \, d\theta\right\|_{\mathcal{M}_{q_{1},\lambda}}\\
&\lesssim\int_{0}^{t}(t-\theta)^{\alpha-1-\frac{\alpha}{\beta}-\frac{\alpha(d-\lambda)}{\beta}(\frac{2}{q_{1}}-\frac{1}{q_{1}})}\big\|u
\otimes u\big\|_{\frac{q_{1}}{2},\lambda}d\theta\\
&\quad +\int_{0}^{t}(t-\theta)^{\alpha-1-\frac{\alpha(d-\lambda)}{\beta}(\frac{1}{q_{2}}+\frac{1}{d-\lambda}-\frac{1}{q_{1}})
}\big\|v\big\|_{q_{2},\lambda}\big\|\nabla\phi\big\|_{d-\lambda,\lambda}d\theta\\
&\lesssim\int_{0}^{t}(t-\theta)^{\alpha-\frac{\alpha(d-\lambda)}{\beta q_{1}}-1}\theta^{-2\alpha+\frac{2\alpha}{\beta}+\frac{2\alpha(d-\lambda)}{\beta q_{1}}}d\theta\big[\sup_{t>0}t^{\frac{\chi_{1}}{2}}\big\|u(t)\big\|_{q_{1},\lambda}\big]^{2}\\
&\quad +\int_{0}^{t}(t-\theta)^{\alpha-\frac{\alpha}{\beta}-\frac{\alpha(d-\lambda)}{\beta q_{2}}+\frac{\alpha(d-\lambda)}{\beta q_{1}}-1}
\theta^{-2\alpha+\frac{2\alpha}{\beta}+\frac{\alpha(d-\lambda)}{\beta q_{2}}}d\theta\big(\sup_{t>0}t^{\frac{\chi_{2}}{2}}\big\|v(t)\big\|_{q_{2},\lambda}\big)\big\|\nabla\phi\big\|_{d-\lambda,\lambda}\\
&\lesssim t^{-\alpha+\frac{\alpha}{\beta}+\frac{\alpha(d-\lambda)}{\beta q_{1}}}\big\|u\big\|_{\mathbb{Y}_{1}}^{2}+t^{-\alpha+\frac{\alpha}{\beta}+\frac{\alpha(d-\lambda)}{\beta q_{1}}}\big\|v\big\|_{\mathbb{Y}_{2}}\big\|\nabla\phi\big\|_{d-\lambda,\lambda},
\end{align*}
where, we note that when conditions \eqref{canshufanwei1} are satisfied, we have that
$\alpha-\frac{\alpha}{\beta}-\frac{\alpha(d-\lambda)}{\beta q_{2}}+\frac{\alpha(d-\lambda)}{\beta q_{1}}>0$. Thus, from above estimate, we get
$$
\left\|\int_{0}^{t}(t-\theta)^{\alpha-1}\mathcal{P}_{\alpha,\beta}(t-\theta)\mathbb{P}(u\cdot\nabla u + v\nabla\phi) \, d\theta\right\|_{\mathbb{Y}_{1}}\lesssim \big\|u\big\|_{\mathbb{Y}_{1}}^{2}+\big\|v\big\|_{\mathbb{Y}_{2}}\big\|\nabla\phi\big\|_{d-\lambda,\lambda}.
$$

\textbf{Step 2}. We estimate
$$
\left\|\int_{0}^{t}(t-\theta)^{\alpha-1}\mathcal{P}_{\alpha,\beta}(t-\theta)\big(u\cdot\nabla v + \nabla\cdot\big(v\nabla(B(w)\big)\big) \, d\theta\right\|_{\mathbb{Y}_{2}}.
$$
Using Proposition \ref{some propesitions}, Lemma \ref{Mittag estimats lemma},and note that $u\cdot \nabla v=\nabla\cdot\big(u\otimes v\big)$ and denote $\frac{1}{\chi}=\frac{1}{q_{3}}-\frac{d-\lambda}{2-\beta}$, we get
\begin{align*}
&\left\|\int_{0}^{t}(t-\theta)^{\alpha-1}\mathcal{P}_{\alpha,\beta}(t-\theta)\bigg(u\cdot\nabla v + \nabla\cdot\big(v\nabla(B(w)\big)\bigg) \, d\theta\right\|_{\dot{\mathcal{N}}^{-\beta_{2}}_{r_{2},\lambda,\infty}}\\
&\lesssim \left\|\int_{0}^{t}(t-\theta)^{\alpha-1}\mathcal{P}_{\alpha,\beta}(t-\theta)\bigg(\nabla\cdot\big(u\otimes v\big) + \nabla\cdot\big(v\nabla(B(w)\big)\bigg) \, d\theta\right\|_{\dot{\mathcal{N}}^{0}_{\frac{d-\lambda}{2\beta-2},\lambda,\infty}}\\
&\lesssim\left\|\int_{0}^{t}(t-\theta)^{\alpha-1}\mathcal{P}_{\alpha,\beta}(t-\theta)\bigg(\nabla\cdot
\big(u\otimes v\big) + \nabla\cdot\big(v\nabla(B(w)\big)\bigg) \, d\theta\right\|_{\mathcal{M}_{\frac{d-\lambda}{2\beta-2},\lambda}}\\
&\lesssim\int_{0}^{t}(t-\theta)^{\alpha-\frac{\alpha}{\beta}-\frac{\alpha(d-\lambda)}{\beta}(\frac{1}{q_{1}}+\frac{1}{q_{2}}
-\frac{2\beta-2}{d-\lambda})-1}\big\|u\otimes v\big\|_{\frac{q_{1}q_{2}}{q_{1}+q_{2}},\lambda}d\theta\\
&\quad +\int_{0}^{t}(t-\theta)^{\alpha-\frac{\alpha}{\beta}-\frac{\alpha(d-\lambda)}{\beta}(\frac{1}{q_{2}}+\frac{1}
{\chi}-\frac{2\beta-2}{d-\lambda})-1}\big\|v\nabla \big((-\Delta)^{\frac{\beta}{2}-1}w\big)\big\|_{\frac{q_{2}\chi}{q_{2}+\chi},\lambda}d\theta\\
&\lesssim\int_{0}^{t}(t-\theta)^{3\alpha-\frac{3\alpha}{\beta}-\frac{\alpha (d-\lambda)}{\beta q_{1}}-\frac{\alpha (d-\lambda)}{\beta q_{2}}-1}\big\|u\big\|_{q_{1},\lambda}\big\|v\big\|_{q_{2},\lambda}d\theta\\
&\quad +\int_{0}^{t}(t-\theta)^{2\alpha-\frac{\alpha}{\beta}-\frac{\alpha(d-\lambda)}{\beta q_{2}}-\frac{\alpha(d-\lambda)}{\beta q_{3}}
-1}\big\|v\big\|_{q_{2},\lambda}\big\|\nabla(-\Delta)^{\frac{\beta}{2}-1}w\big\|_{\chi,\lambda}d\theta\\
&\lesssim \int_{0}^{t}(t-\theta)^{3\alpha-\frac{3\alpha}{\beta}-\frac{\alpha (d-\lambda)}{\beta q_{1}}-\frac{\alpha (d-\lambda)}{\beta q_{2}}-1}\theta^{-3\alpha+\frac{3\alpha}{\beta}+\frac{\alpha (d-\lambda)}{\beta q_{1}}+\frac{\alpha (d-\lambda)}{\beta q_{2}}}d\theta\bigg(\sup_{t>0}t^{\frac{\chi_{1}}{2}}\big\|u\big\|_{q_{1},\lambda}\bigg)
\bigg(\sup_{t>0}t^{\frac{\chi_{2}}{2}}\big\|v\big\|_{q_{2},\lambda}\bigg)\\
&\quad +\int_{0}^{t}(t-\theta)^{2\alpha-\frac{\alpha}{\beta}-\frac{\alpha(d-\lambda)}{\beta q_{2}}-\frac{\alpha(d-\lambda)}{\beta q_{3}}
-1}\theta^{-2\alpha+\frac{\alpha}{\beta}+\frac{\alpha(d-\lambda)}{\beta q_{2}}+\frac{\alpha(d-\lambda)}{\beta q_{3}}}d\theta\bigg(\sup_{t>0}t^{\frac{\chi_{2}}{2}}\big\|v\big\|_{q_{2},\lambda}\bigg)
\bigg(\sup_{t>0}t^{\frac{\chi_{3}}{2}}\big\|\nabla w\big\|_{q_{3},\lambda}\bigg)\\
&\lesssim \big\|u\big\|_{\mathbb{Y}_{1}}\big\|v\big\|_{\mathbb{Y}_{2}}+\big\|v\big\|_{\mathbb{Y}_{2}}\big\|w\big\|_{\mathbb{Y}_{3}},
\end{align*}
where, it is observed that $0 < 2\alpha - \frac{\alpha}{\beta} - \frac{\alpha(d-\lambda)}{\beta q_{2}} - \frac{\alpha(d-\lambda)}{\beta q_{3}} < 1$ and $0 < 3\alpha - \frac{3\alpha}{\beta} - \frac{\alpha (d-\lambda)}{\beta q_{1}} - \frac{\alpha (d-\lambda)}{\beta q_{2}} < 1$ hold under the condition \eqref{canshufanwei1}.

Moreover,
\begin{align*}
&\left\|\int_{0}^{t}(t-\theta)^{\alpha-1}\mathcal{P}_{\alpha,\beta}(t-\theta)
\bigg(\nabla\cdot\big(u\otimes v\big) + \nabla\cdot\big(v\nabla(B(w)\big)\bigg) \, d\theta\right\|_{\mathcal{M}_{q_{2},\lambda}(\mathbb{R}^{d})}\\
&\lesssim\int_{0}^{t}(t-\theta)^{\alpha-\frac{\alpha}{\beta}-\frac{\alpha(d-\lambda)}{\beta}(\frac{1}{q_{1}}+\frac{1}{q_{2}}-
\frac{1}{q_{2}})-1}\big\|u\big\|_{\mathcal{M}_{q_{1},\lambda}(\mathbb{R}^{d})}
\big\|v\big\|_{\mathcal{M}_{q_{2},\lambda}(\mathbb{R}^{d})}d\theta\\
&\quad +\int_{0}^{t}(t-\theta)^{\alpha-\frac{\alpha}{\beta}-\frac{\alpha(d-\lambda)}{\beta}(\frac{1}{q_{2}}+\frac{1}{\chi}-\frac{1}{q_{2}})-1}
\big\|v\big\|_{\mathcal{M}_{q_{2},\lambda}(\mathbb{R}^{d})}
\big\|\nabla(-\Delta)^{\frac{\beta}{2}-1}w\big\|_{\mathcal{M}_{\chi,\lambda}(\mathbb{R}^{d})}d\theta\\
&\lesssim\int_{0}^{t}(t-\theta)^{\alpha-\frac{\alpha}{\beta}-\frac{\alpha(d-\lambda)}{\beta q_{1}}-1}\theta^{-3\alpha+\frac{3\alpha}{\beta}+\frac{\alpha(d-\lambda)}{\beta q_{1}}+\frac{\alpha(d-\lambda)}{\beta q_{2}}}d\theta\bigg(\sup_{t>0}t^{\frac{\chi_{1}}{2}}\big\|u(t)\big\|_{\mathcal{M}_{q_{1},\lambda}(\mathbb{R}^{d})}\bigg)
\bigg(\sup_{t>0}t^{\frac{\chi_{2}}{2}}\big\|v(t)\big\|_{\mathcal{M}_{q_{2},\lambda}(\mathbb{R}^{d})}\bigg)\\
&\quad +\int_{0}^{t}(t-\theta)^{\frac{\alpha}{\beta}-\frac{\alpha(d-\lambda)}{\beta q_{3}}-1}\theta^{-2\alpha+\frac{\alpha}{\beta}+\frac{\alpha(d-\lambda)}{\beta q_{2}}+\frac{\alpha(d-\lambda)}{\beta q_{3}}}d\theta\bigg(\sup_{t>0}t^{\frac{\chi_{2}}{2}}\big\|v(t)\big\|_{\mathcal{M}_{q_{2},\lambda}(\mathbb{R}^{d})}\bigg)
\bigg(\sup_{t>0}t^{\frac{\chi_{3}}{2}}\big\|\nabla w(t)\big\|_{\mathcal{M}_{q_{3},\lambda}(\mathbb{R}^{d})}\bigg)\\
&\lesssim t^{-2\alpha+\frac{2\alpha}{\beta}+\frac{\alpha(d-\lambda)}{\beta q_{2}}}\big\|u\big\|_{\mathbb{Y}_{1}}\big\|v\big\|_{\mathbb{Y}_{2}}+t^{-2\alpha+\frac{2\alpha}{\beta}+\frac{\alpha(d-\lambda)}{\beta q_{2}}}\big\|v\big\|_{\mathbb{Y}_{2}}\big\|w\big\|_{\mathbb{Y}_{3}}.
\end{align*}
Therefore, from the above estimate, we get that
\begin{align*}
&\left\|\int_{0}^{t}(t-\theta)^{\alpha-1}\mathcal{P}_{\alpha,\beta}(t-\theta)\bigg(u\cdot\nabla v + \nabla\cdot\big(v\nabla(B(w)\big)\bigg) \, d\theta\right\|_{\mathbb{Y}_{2}}\\
&\lesssim\big\|u\big\|_{\mathbb{Y}_{1}}\big\|v\big\|_{\mathbb{Y}_{2}}+\big\|v\big\|_{\mathbb{Y}_{2}}\big\|w\big\|_{\mathbb{Y}_{3}}.
\end{align*}

\textbf{Step 3}. We estimate
$$
\left\|\int_{0}^{t}(t-\theta)^{\alpha-1}\mathcal{P}_{\alpha,\beta}(t-\theta)\bigg(u\cdot\nabla w + \big((-\Delta)^{\frac{2-\beta}{2}}w\big)v\bigg) \, d\theta\right\|_{\mathbb{Y}_{3}}.
$$
Using Proposition \ref{some propesitions}, Lemma \ref{Mittag estimats lemma}, we get
\begin{align*}
&\left\|\nabla\int_{0}^{t}(t-\theta)^{\alpha-1}\mathcal{P}_{\alpha,\beta}(t-\theta)
\bigg(u\cdot\nabla w + \big((-\Delta)^{\frac{2-\beta}{2}}w\big)v\bigg) \, d\theta\right\|_{\dot{\mathcal{N}}^{-\beta_{3}}_{r_{3},\lambda,\infty}}\\
&\lesssim\left\|\nabla\int_{0}^{t}(t-\theta)^{\alpha-1}\mathcal{P}_{\alpha,\beta}(t-\theta)
\bigg(u\cdot\nabla w + \big((-\Delta)^{\frac{2-\beta}{2}}w\big)v\bigg) \, d\theta\right\|_{\dot{\mathcal{N}}^{0}_{d-\lambda,\lambda,\infty}}\\
&\lesssim\left\|\nabla\int_{0}^{t}(t-\theta)^{\alpha-1}\mathcal{P}_{\alpha,\beta}(t-\theta)
\bigg(u\cdot\nabla w + \big((-\Delta)^{\frac{2-\beta}{2}}w\big)v\bigg) \, d\theta\right\|_{\mathcal{M}_{d-\lambda,\lambda}}\\
&\lesssim\int_{0}^{t}(t-\theta)^{\alpha-\frac{\alpha}{\beta}-\frac{\alpha(d-\lambda)}{\beta}(\frac{1}{q_{1}}+
\frac{1}{q_{3}}-\frac{1}{d-\lambda})-1}
\big\|u\big\|_{q_{1},\lambda}\big\|\nabla w\big\|_{q_{3},\lambda}d\theta\\
&\quad +\int_{0}^{t}(t-\theta)^{\alpha-\frac{\alpha}{\beta}-\frac{\alpha(d-\lambda)}{\beta}(\frac{1}{q_{2}}+\frac{2-\beta}{d-\lambda}
-\frac{1}{d-\lambda})-1}\big\|v\big\|_{q_{2},\lambda}\big\|w\big\|_{\mathcal{M}^{2-\beta}_{\frac{d-\lambda}{2-\beta},\lambda}}d\theta\\
&\lesssim\int_{0}^{t}(t-\theta)^{\alpha-\frac{\alpha(d-\lambda)}{\beta q_{1}}-\frac{\alpha(d-\lambda)}{\beta q_{3}}-1}\theta^{-\alpha+\frac{\alpha(d-\lambda)}{\beta q_{1}}+\frac{\alpha(d-\lambda)}{\beta q_{3}}}d\theta
\bigg(\sup_{t>0}t^{\frac{\chi_{3}}{2}}\big\|\nabla w(t)\big\|_{\mathcal{M}_{q_{3},\lambda}}\bigg)\bigg(\sup_{t>0}t^{\frac{\chi_{1}}{2}}
\big\|u(t)\big\|_{\mathcal{M}_{q_{1},\lambda}}\bigg)\\
&\quad +\int_{0}^{t}(t-\theta)^{2\alpha-\frac{2\alpha}{\beta}-\frac{\alpha(d-\lambda)}{\beta q_{2}}-1}\theta^{-2\alpha+\frac{2\alpha}{\beta}+\frac{\alpha(d-\lambda)}{\beta q_{2}}}d\theta\bigg(\sup_{t>0}t^{\frac{\chi_{2}}{2}}
\big\|v(t)\big\|_{\mathcal{M}_{q_{2},\lambda}}\bigg)\bigg(\sup_{t>0}\big\|w(t)\big\|
_{\mathcal{M}^{2-\beta}_{\frac{d-\lambda}{2-\beta},\lambda}}\bigg)\\
&\lesssim \big\|u\big\|_{\mathbb{Y}_{1}}\big\|w\big\|_{\mathbb{Y}_{3}}+\big\|v\big\|_{\mathbb{Y}_{2}}
\big\|w\big\|_{\mathbb{Y}_{3}},
\end{align*}
where under the condition that \eqref{canshufanwei1} is satisfied, we obtain that
$0<\alpha-\frac{\alpha(d-\lambda)}{\beta q_{1}}-\frac{\alpha(d-\lambda)}{\beta q_{3}}<1$.

Moreover,
\begin{align*}
&\left\|\nabla\int_{0}^{t}(t-\theta)^{\alpha-1}\mathcal{P}_{\alpha,\beta}(t-\theta)
\bigg(u\cdot\nabla w + \big((-\Delta)^{\frac{2-\beta}{2}}w\big)v\bigg) \, d\theta\right\|_{\mathcal{M}_{q_{3},\lambda}}\\
&\lesssim \int_{0}^{t}(t-\theta)^{\alpha-\frac{\alpha}{\beta}-\frac{\alpha(d-\lambda)}{\beta}(\frac{1}{q_{1}}+\frac{1}{q_{3}}-\frac{1}{q_{3}})-1}
\big\|u\big\|_{q_{1},\lambda}\big\|\nabla w\big\|_{q_{3},\lambda}d\theta\\
&\quad +\int_{0}^{t}(t-\theta)^{\alpha-\frac{\alpha}{\beta}-\frac{\alpha(d-\lambda)}{\beta}(\frac{1}{q_{2}}
+\frac{2-\beta}{d-\lambda}-\frac{1}{q_{3}})-1}\big\|(-\Delta)^{\frac{2-\beta}{2}}w\big)v\big\|_{\frac{q_{2}(d-\lambda)}{d-\lambda
+(2-\beta)q_{2}},\lambda}d\theta\\
&\lesssim \int_{0}^{t}(t-\theta)^{\alpha-\frac{\alpha}{\beta}-\frac{\alpha(d-\lambda)}{\beta q_{1}}-1}\theta^{-\alpha+\frac{\alpha(d-\lambda)}{\beta q_{1}}+\frac{\alpha(d-\lambda)}{\beta q_{3}}}d\theta\bigg(\sup_{t>0}t^{\frac{\chi_{3}}{2}}\big\|\nabla w(t)\big\|_{\mathcal{M}_{q_{3},\lambda}(\mathbb{R}^{d})}\bigg)\bigg(\sup_{t>0}t^{\frac{\chi_{1}}{2}}
\big\|u(t)\big\|_{\mathcal{M}_{q_{1},\lambda}(\mathbb{R}^{d})}\bigg)\\
&\quad +\int_{0}^{t}(t-\theta)^{2\alpha-\frac{3\alpha}{\beta}-\frac{\alpha(d-\lambda)}{\beta q_{2}}
+\frac{\alpha(d-\lambda)}{\beta q_{3}}-1}\theta^{-2\alpha+\frac{2\alpha}{\beta}+\frac{\alpha(d-\lambda)}{\beta q_{2}}}d\theta\bigg(\sup_{t>0}\big\|w(t)\big\|_{\mathcal{M}^{2-\beta}_{\frac{d-\lambda}{2-\beta},\lambda}}\bigg)
\bigg(\sup_{t>0}t^{\frac{\chi_{2}}{2}}\big\|v(t)\big\|_{\mathcal{M}_{q_{1},\lambda}(\mathbb{R}^{d})}\bigg)\\
&\lesssim t^{-\frac{\alpha}{\beta}+\frac{\alpha(d-\lambda)}{\beta q_{3}}}\big\|u\big\|_{\mathbb{Y}_{1}}\big\|w\big\|_{\mathbb{Y}_{3}}
+t^{-\frac{\alpha}{\beta}+\frac{\alpha(d-\lambda)}{\beta q_{3}}}\big\|v\big\|_{\mathbb{Y}_{2}}\big\|w\big\|_{\mathbb{Y}_{3}},
\end{align*}
where, we observe that when \eqref{canshufanwei1} is satisfied, we have $2\alpha - \frac{3\alpha}{\beta} - \frac{\alpha(d-\lambda)}{\beta q_{2}} + \frac{\alpha(d-\lambda)}{\beta q_{3}} > 0$.
As well as,
\begin{align*}
&\left\|\int_{0}^{t}(t-\theta)^{\alpha-1}\mathcal{P}_{\alpha,\beta}(t-\theta)
\bigg(u\cdot\nabla w + \big((-\Delta)^{\frac{2-\beta}{2}}w\big)v\bigg) \, d\theta\right\|_{\mathcal{M}_{\frac{d-\lambda}{2-\beta},\lambda}^{2-\beta}}\\
&\lesssim\left\|(-\Delta)^{\frac{2-\beta}{2}}\int_{0}^{t}(t-\theta)^{\alpha-1}\mathcal{P}_{\alpha,\beta}(t-\theta)
\bigg(u\cdot\nabla w + \big((-\Delta)^{\frac{2-\beta}{2}}w\big)v\bigg) \, d\theta\right\|_{\mathcal{M}_{\frac{d-\lambda}{2-\beta},\lambda}}\\
&\lesssim\int_{0}^{t}(t-\theta)^{\alpha-\frac{\alpha(2-\beta)}{\beta}-\frac{\alpha(d-\lambda)}{\beta}(\frac{1}{q_{1}}+\frac{1}{q_{3}}
-\frac{2-\beta}{d-\lambda})-1}\big\|u\big\|_{q_{1},\lambda}\big\|\nabla w\big\|_{q_{3},\lambda}d\theta\\
&\quad +\int_{0}^{t}(t-\theta)^{\alpha-\frac{\alpha(2-\beta)}{\beta}-\frac{\alpha(d-\lambda)}{\beta}(\frac{1}{q_{1}}+\frac{2-\beta}{d-\lambda}
-\frac{2-\beta}{d-\lambda})-1}\big\|v\big\|_{q_{2},\lambda}\big\|w\big\|
_{\mathcal{M}^{2-\beta}_{\frac{d-\lambda}{2-\beta},\lambda}}d\theta\\
&\lesssim \int_{0}^{t}(t-\theta)^{\alpha-\frac{\alpha(d-\lambda)}{\beta q_{1}}-\frac{\alpha(d-\lambda)}{\beta q_{3}}-1}\theta^{-\alpha+\frac{\alpha(d-\lambda)}{\beta q_{1}}+\frac{\alpha(d-\lambda)}{\beta q_{3}}}d\theta\bigg(\sup_{t>0}t^{\frac{\chi_{3}}{2}}\big\|\nabla w(t)\big\|_{\mathcal{M}_{q_{3},\lambda}(\mathbb{R}^{d})}\bigg)\bigg(\sup_{t>0}t^{\frac{\chi_{1}}{2}}\big\|u(t)\big\|
_{\mathcal{M}_{q_{1},\lambda}(\mathbb{R}^{d})}\bigg)\\
&\quad +\int_{0}^{t}(t-\theta)^{2\alpha-\frac{2\alpha}{\beta}-\frac{\alpha(d-\lambda)}{\beta q_{2}}-1}\theta^{-2\alpha+\frac{2\alpha}{\beta}+\frac{\alpha(d-\lambda)}{\beta q_{2}}}d\theta
\bigg(\sup_{t>0}\big\|w(t)\big\|_{\mathcal{M}^{2-\beta}_{\frac{d-\lambda}{2-\beta},\lambda}}\bigg)
\bigg(\sup_{t>0}t^{\frac{\chi_{2}}{2}}\big\|v(t)\big\|_{\mathcal{M}_{q_{2},\lambda}(\mathbb{R}^{d})}\bigg)\\
&\lesssim \big\|u\big\|_{\mathbb{Y}_{1}}\big\|w\big\|_{\mathbb{Y}_{3}}+\big\|v\big\|_{\mathbb{Y}_{2}}\big\|w\big\|_{\mathbb{Y}_{3}},
\end{align*}
Therefore£¬ from above estimate, we get
\begin{align*}
&\left\|\int_{0}^{t}(t-\theta)^{\alpha-1}\mathcal{P}_{\alpha,\beta}(t-\theta)\bigg(u\cdot\nabla w + \big((-\Delta)^{\frac{2-\beta}{2}}w\big)v\bigg) \, d\theta\right\|_{\mathbb{Y}_{3}}\\
&\lesssim \big\|u\big\|_{\mathbb{Y}_{1}}\big\|w\big\|_{\mathbb{Y}_{3}}+\big\|v\big\|_{\mathbb{Y}_{2}}\big\|w\big\|_{\mathbb{Y}_{3}}.
\end{align*}

\textbf{Step 4.} We will estimate
\begin{align*}
\left\|\mathcal{S}_{\alpha,\beta}(t)u_{0}\right\|_{\mathbb{Y}_{1}},
\left\|\mathcal{S}_{\alpha,\beta}(t)v_{0}\right\|_{\mathbb{Y}_{2}},
\left\|\mathcal{S}_{\alpha,\beta}(t)w_{0}\right\|_{\mathbb{Y}_{3}}.
\end{align*}
respectively.
Using Lemma \ref{Mittag estimats lemma}, Propesition \ref{some propesitions}, we get
\begin{align*}
&\left\|\mathcal{S}_{\alpha,\beta}(t)u_{0}\right\|_{\mathbb{Y}_{1}}\\
&\lesssim \sup_{t>0}\left\|\mathcal{S}_{\alpha,\beta}(t)u_{0}\right\|_{\dot{\mathcal{N}}^{-\beta_{1}}_{r_{1},
\lambda,\infty}}+\sup_{t>0}t^{\frac{\chi_{1}}{2}}
\left\|\mathcal{S}_{\alpha,\beta}(t)u_{0}\right\|_{\mathcal{M}_{q_{1},\lambda}}\\
&\lesssim \big\|u_{0}\big\|_{\dot{\mathcal{N}}^{-\beta_{1}}_{r_{1},
\lambda,\infty}}+\sup_{t>0}t^{\frac{\chi_{1}}{2}}
\left\|\mathcal{S}_{\alpha,\beta}(t)u_{0}\right\|_{\dot{\mathcal{N}}^{0}_{q_{1},
\lambda,1}}\\
&\lesssim
\big\|u_{0}\big\|_{\dot{\mathcal{N}}^{-\beta_{1}}_{r_{1},
\lambda,\infty}}+\sup_{t>0}t^{\frac{\chi_{1}}{2}}t^{-\frac{\alpha}{\beta}(\beta-1-\frac{d-\lambda}{q_{1}})}
\left\|u_{0}\right\|_{\dot{\mathcal{N}}
^{-\beta+1+\frac{d-\lambda}{q_{1}}}_{q_{1},
\lambda,\infty}}\lesssim\big\|u_{0}\big\|_{\dot{\mathcal{N}}^{-\beta_{1}}_{r_{1},
\lambda,\infty}},
\end{align*}
\begin{align*}
&\left\|\mathcal{S}_{\alpha,\beta}(t)v_{0}\right\|_{\mathbb{Y}_{2}}\\
&\lesssim \sup_{t>0}\left\|\mathcal{S}_{\alpha,\beta}(t)v_{0}\right\|_{\dot{\mathcal{N}}^{-\beta_{2}}_{r_{2},
\lambda,\infty}}+\sup_{t>0}t^{\frac{\chi_{2}}{2}}
\left\|\mathcal{S}_{\alpha,\beta}(t)v_{0}\right\|_{\mathcal{M}_{q_{2},\lambda}}\\
&\lesssim \big\|v_{0}\big\|_{\dot{\mathcal{N}}^{-\beta_{2}}_{r_{2},
\lambda,\infty}}+\sup_{t>0}t^{\frac{\chi_{2}}{2}}
\left\|\mathcal{S}_{\alpha,\beta}(t)v_{0}\right\|_{\dot{\mathcal{N}}^{0}_{q_{2},
\lambda,1}}\\
&\lesssim
\big\|v_{0}\big\|_{\dot{\mathcal{N}}^{-\beta_{2}}_{r_{2},
\lambda,\infty}}+\sup_{t>0}t^{\frac{\chi_{2}}{2}}t^{-\frac{\alpha}{\beta}(2\beta-2-\frac{d-\lambda}{q_{2}})}
\left\|v_{0}\right\|_{\dot{\mathcal{N}}
^{-2\beta+2+\frac{d-\lambda}{q_{2}}}_{q_{2},
\lambda,\infty}}\lesssim\big\|v_{0}\big\|_{\dot{\mathcal{N}}^{-\beta_{2}}_{r_{2},
\lambda,\infty}},
\end{align*}
and
\begin{align*}
&\left\|\mathcal{S}_{\alpha,\beta}(t)w_{0}\right\|_{\mathbb{Y}_{3}}\\
&\lesssim \sup_{t>0}\left\|\nabla \mathcal{S}_{\alpha,\beta}(t)w_{0}\right\|_{\dot{\mathcal{N}}^{-\beta_{3}}_{r_{3},
\lambda,\infty}}+\sup_{t>0}t^{\frac{\chi_{3}}{2}}
\left\|\nabla \mathcal{S}_{\alpha,\beta}(t)w_{0}\right\|_{\mathcal{M}_{q_{3},\lambda}}\\
&\quad+\sup_{t>0}\left\|(-\Delta)^{\frac{2-\beta}{2}}\mathcal{S}_{\alpha,\beta}(t)w_{0}
\right\|_{\mathcal{M}_{\frac{d-\lambda}{2-\beta},\lambda}}\\
&\lesssim \big\|\nabla w_{0}\big\|_{\dot{\mathcal{N}}^{-\beta_{3}}_{r_{3},
\lambda,\infty}}+\sup_{t>0}t^{\frac{\chi_{3}}{2}}
\left\|\nabla \mathcal{S}_{\alpha,\beta}(t)w_{0}\right\|_{\dot{\mathcal{N}}^{0}_{q_{3},
\lambda,1}}+\big\|w_{0}\big\|_{\mathcal{M}^{2-\beta}_{\frac{d-\lambda}{2-\beta},\lambda}}\\
&\lesssim
\big\|\nabla w_{0}\big\|_{\dot{\mathcal{N}}^{-\beta_{3}}_{r_{3},
\lambda,\infty}}+\sup_{t>0}t^{\frac{\chi_{3}}{2}}t^{-\frac{\alpha}{\beta}(1-\frac{d-\lambda}{q_{3}})}
\left\|\nabla w_{0}\right\|_{\dot{\mathcal{N}}
^{-1+\frac{d-\lambda}{q_{3}}}_{q_{3},
\lambda,\infty}}+\big\|w_{0}\big\|_{\mathcal{M}^{2-\beta}_{\frac{d-\lambda}{2-\beta},\lambda}}\\
&\lesssim\big\|\nabla w_{0}\big\|_{\dot{\mathcal{N}}^{-\beta_{3}}_{r_{3},
\lambda,\infty}}+\big\|w_{0}\big\|_{\mathcal{M}^{2-\beta}_{\frac{d-\lambda}{2-\beta},\lambda}}.
\end{align*}

\textbf{Step 5}. From above estimates, when $\kappa$ is small, we get
\begin{align*}
&\big\|\Phi(u,v,w)\big\|_{\mathbb{F}}\\
&\lesssim\big\|\Phi_{1}(u,v,w)\big\|_{\mathbb{Y}_{1}}+\big\|\Phi_{2}(u,v,w)\big\|_{\mathbb{Y}_{2}}
+\big\|\Phi_{3}(u,v,w)\big\|_{\mathbb{Y}_{3}}\\
&\lesssim \big\|u\big\|_{\mathbb{Y}_{1}}^{^{2}}+\big\|v\big\|_{\mathbb{Y}_{2}}\big\|\nabla\phi\big\|_{d-\lambda,\lambda}+
\big\|u\big\|_{\mathbb{Y}_{1}}\big\|v\big\|_{\mathbb{Y}_{2}}+\big\|v\big\|_{\mathbb{Y}_{2}}\big\|w\big\|_{\mathbb{Y}_{3}}
+\big\|u\big\|_{\mathbb{Y}_{1}}\big\|w\big\|_{\mathbb{Y}_{3}}+\big\|v\big\|_{\mathbb{Y}_{2}}\big\|w\big\|_{\mathbb{Y}_{3}}\\
&\quad +\big\|u_{0}\big\|_{\dot{\mathcal{N}}^{-\beta_{1}}_{r_{1},
\lambda,\infty}}+\big\|v_{0}\big\|_{\dot{\mathcal{N}}^{-\beta_{2}}_{r_{2},
\lambda,\infty}}+\big\|\nabla w_{0}\big\|_{\dot{\mathcal{N}}^{-\beta_{3}}_{r_{3},
\lambda,\infty}}+\big\|w_{0}\big\|_{\mathcal{M}^{2-\beta}_{\frac{d-\lambda}{2-\beta},\lambda}}\\
&\lesssim \bigg(\big\|u\big\|_{\mathbb{Y}_{1}}+\big\|v\big\|_{\mathbb{Y}_{2}}+\big\|w\big\|_{\mathbb{Y}_{3}}\bigg)
\bigg(\big\|u\big\|_{\mathbb{Y}_{1}}+\big\|v\big\|_{\mathbb{Y}_{2}}+\big\|w\big\|_{\mathbb{Y}_{3}}+\big\|\nabla\phi\big\|
_{d-\lambda,\lambda}\bigg)\\
&\quad +\big\|u_{0}\big\|_{\dot{\mathcal{N}}^{-\beta_{1}}_{r_{1},
\lambda,\infty}}+\big\|v_{0}\big\|_{\dot{\mathcal{N}}^{-\beta_{2}}_{r_{2},
\lambda,\infty}}+\big\|\nabla w_{0}\big\|_{\dot{\mathcal{N}}^{-\beta_{3}}_{r_{3},
\lambda,\infty}}+\big\|w_{0}\big\|_{\mathcal{M}^{2-\beta}_{\frac{d-\lambda}{2-\beta},\lambda}}\lesssim\kappa.
\end{align*}
The proof is thus completed.
\end{proof}
\begin{lemma}\label{contraction operator}
Given the conditions of Lemma \ref{selfmap}, for any $(u_{1},v_{1},w_{1}),(u_{2},v_{2},w_{2}) \in \mathbb{F}_{\kappa}$ and
$$
\big(u_{1},v_{1},w_{1}\big)\big|_{t=0} = \big(u_{2},v_{2},w_{2}\big)\big|_{t=0} = \big(u_{0},v_{0},w_{0}\big),
$$
then the operator $\Phi = \left(\Phi_{1}, \Phi_{2}, \Phi_{3}\right)$ is a contraction operator.
\end{lemma}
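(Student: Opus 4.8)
The plan is to exploit the \emph{bilinear} structure of all the nonlinearities in $\Phi$ (the potential term being merely linear) together with the product estimates already proved in Lemma~\ref{selfmap}. Write $U_{i}=(u_{i},v_{i},w_{i})$, $i=1,2$, for the two elements of $\mathbb{F}_{\kappa}$. Since they share the same initial data, the linear parts $\mathcal{S}_{\alpha,\beta}(t)u_{0}$, $\mathcal{S}_{\alpha,\beta}(t)v_{0}$, $\mathcal{S}_{\alpha,\beta}(t)w_{0}$ cancel in $\Phi(U_{1})-\Phi(U_{2})$. Hence each component of $\Phi(U_{1})-\Phi(U_{2})$ is a sum of Duhamel integrals $\int_{0}^{t}(t-\theta)^{\alpha-1}\mathcal{P}_{\alpha,\beta}(t-\theta)(\cdots)\,d\theta$ acting on \emph{differences} of bilinear expressions, and the whole problem reduces to estimating those differences.

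First I would apply, term by term, the telescoping identity for a bilinear map $B$,
\[
B(a_{1},b_{1})-B(a_{2},b_{2})=B(a_{1}-a_{2},b_{1})+B(a_{2},b_{1}-b_{2}).
\]
Using $u_{i}\cdot\nabla u_{i}=\nabla\cdot(u_{i}\otimes u_{i})$ and $u_{i}\cdot\nabla v_{i}=\nabla\cdot(u_{i}\otimes v_{i})$, this rewrites every quadratic term as a difference factor times a remaining factor; for example $u_{1}\otimes u_{1}-u_{2}\otimes u_{2}=(u_{1}-u_{2})\otimes u_{1}+u_{2}\otimes(u_{1}-u_{2})$, and analogously for $v_{i}\nabla B(w_{i})$, for $u_{i}\cdot\nabla w_{i}$, and for $\big((-\Delta)^{\frac{2-\beta}{2}}w_{i}\big)v_{i}$. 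The potential term $v_{i}\nabla\phi$ is linear in $v_{i}$ with $\nabla\phi$ fixed, so its difference is simply $(v_{1}-v_{2})\nabla\phi$.

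Next I would bound each Duhamel integral by transcribing the computations of Steps~1--3 of Lemma~\ref{selfmap}, which rest only on Proposition~\ref{some propesitions}(ii)--(iii), the boundedness of $\mathbb{P}$ on $\mathcal{M}_{p,\lambda}$, and the operator estimates of Lemma~\ref{Mittag estimats lemma}. Every estimate there is genuinely bilinear, i.e. of the form (norm of one factor)$\,\times\,$(norm of the other factor) with identical time homogeneity; replacing one factor by the corresponding difference therefore produces a bound carrying one difference norm (such as $\|u_{1}-u_{2}\|_{\mathbb{Y}_{1}}$ or $\|w_{1}-w_{2}\|_{\mathbb{Y}_{3}}$) and one norm of an element of $\mathbb{F}_{\kappa}$, which is $\lesssim\kappa$. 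Summing the six contributions yields
\[
\big\|\Phi(U_{1})-\Phi(U_{2})\big\|_{\mathbb{F}}\lesssim\big(\|U_{1}\|_{\mathbb{F}}+\|U_{2}\|_{\mathbb{F}}+\|\nabla\phi\|_{d-\lambda,\lambda}\big)\big\|U_{1}-U_{2}\big\|_{\mathbb{F}}\lesssim\kappa\,\big\|U_{1}-U_{2}\big\|_{\mathbb{F}},
\]
and choosing $\kappa$ so small that the implied constant times $\kappa$ is $<1$ makes $\Phi$ a contraction on $\mathbb{F}_{\kappa}$.

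The argument introduces no new analytic ingredient beyond Lemma~\ref{selfmap}; the only point requiring care is bookkeeping. I expect the main obstacle to be checking that, for each of the six bilinear terms, the difference factor is measured in \emph{exactly} the norm whose time weight makes the Beta-function exponents fall in the admissible range already secured by \eqref{canshufanwei1}--\eqref{canshufanwei2} (the inequalities $0<\cdots<1$ on the integrability exponents), so that no new constraint on the parameters $r_{j},q_{j},\beta_{j},\chi_{j}$ is incurred.
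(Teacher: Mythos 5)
Your proposal is correct and follows essentially the same route as the paper: the paper likewise cancels the linear parts, telescopes each bilinear term into $\tilde{u}\cdot\nabla u_{1}+u_{2}\nabla\tilde{u}+\tilde{v}\nabla\phi$ (and the analogous decompositions for $\Phi_{2},\Phi_{3}$), invokes the estimates of Lemma~\ref{selfmap} to get a bound $\lesssim\kappa\big(\|\tilde{u}\|_{\mathbb{Y}_{1}}+\|\tilde{v}\|_{\mathbb{Y}_{2}}+\|\tilde{w}\|_{\mathbb{Y}_{3}}\big)$, and concludes by taking $\kappa$ small. No substantive difference.
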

\begin{proof}
For convenience, we denote that
$$
\big(\tilde{u},\tilde{v},\tilde{w}\big)=\big(u_{1}-u_{2},v_{1}-v_{2},w_{1}-w_{2}\big).
$$
Note that
\begin{align*}
&\bigg|\Phi_{1}(u_{1},v_{1},w_{1})-\Phi_{1}(u_{2},v_{2},w_{2})\bigg|\\
&=\bigg|\int_{0}^{t}(t-\theta)^{\alpha-1}\mathcal{P}_{\alpha,\beta}(t-\theta)\mathbb{P}
\left(u_{1}\cdot\nabla u_{1} + v_{1}\nabla\phi\right) \, d\theta\\
&\quad -\int_{0}^{t}(t-\theta)^{\alpha-1}\mathcal{P}_{\alpha,\beta}(t-\theta)\mathbb{P}
\left(u_{2}\cdot\nabla u_{2} + v_{2}\nabla\phi\right) \, d\theta\bigg|\\
&=\bigg|\int_{0}^{t}(t-\theta)^{\alpha-1}\mathcal{P}_{\alpha,\beta}(t-\theta)\mathbb{P}
\left(\tilde{u}\cdot\nabla u_{1}+u_{2}\nabla\tilde{u} + \tilde{v}\nabla\phi\right) \, d\theta\bigg|,
\end{align*}
\begin{align*}
&\bigg|\Phi_{2}(u_{1},v_{1},w_{1})-\Phi_{2}(u_{2},v_{2},w_{2})\bigg|\\
&=\bigg|\int_{0}^{t}(t-\theta)^{\alpha-1}\mathcal{P}_{\alpha,\beta}(t-\theta)\bigg(u_{1}\cdot\nabla v_{1} + \nabla\cdot\big(v_{1}\nabla(B(w_{1})\big)\bigg) \, d\theta\\
&\quad-\int_{0}^{t}(t-\theta)^{\alpha-1}\mathcal{P}_{\alpha,\beta}(t-\theta)\bigg(u_{2}\cdot\nabla v_{2} + \nabla\cdot\big(v_{2}\nabla(B(w_{2})\big)\bigg) \, d\theta\bigg|\\
&=\bigg|\int_{0}^{t}(t-\theta)^{\alpha-1}\mathcal{P}_{\alpha,\beta}(t-\theta)\bigg(\tilde{u}\cdot\nabla v_{1}+u_{2}\cdot\nabla \tilde{v} + \nabla\cdot\big(\tilde{v}\nabla(B(w_{1}))+v_{2}\nabla(B(\tilde{w})\big)\bigg) \, d\theta\bigg|,
\end{align*}
and
\begin{align*}
&\bigg|\Phi_{3}(u_{1},v_{1},w_{1})-\Phi_{3}(u_{2},v_{2},w_{2})\bigg|\\
&=\bigg|\int_{0}^{t}(t-\theta)^{\alpha-1}\mathcal{P}_{\alpha,\beta}(t-\theta)\bigg(u_{1}\cdot\nabla w_{1} + \big((-\Delta)^{\frac{2-\beta}{2}}w_{1}\big)v_{1}\bigg) \, d\theta\\
&\quad-\int_{0}^{t}(t-\theta)^{\alpha-1}\mathcal{P}_{\alpha,\beta}(t-\theta)\bigg(u_{2}\cdot\nabla w_{2} + \big((-\Delta)^{\frac{2-\beta}{2}}w_{2}\big)v_{2}\bigg) \, d\theta\bigg|\\
&=\bigg|\int_{0}^{t}(t-\theta)^{\alpha-1}\mathcal{P}_{\alpha,\beta}(t-\theta)
\bigg(\tilde{u}\nabla w_{1}+u_{2}\nabla \tilde{w}+\big((-\Delta)^{\frac{2-\beta}{2}}\tilde{w}\big)v_{1}+\big((-\Delta)^{\frac{2-\beta}{2}}w_{2}\big)\tilde{v}\bigg)\, d\theta\bigg|.
\end{align*}
Therefore, from Lemma \ref{selfmap}, we get
\begin{align*}
&\bigg\|\Phi_{1}(u_{1},v_{1},w_{1})-\Phi_{1}(u_{2},v_{2},w_{2})\bigg\|_{\mathbb{Y}_{1}}\\
&\lesssim\big\|\tilde{u}\big\|_{\mathbb{Y}_{1}}\bigg(\big\|u_{1}\big\|_{\mathbb{Y}_{1}}
+\big\|u_{2}\big\|_{\mathbb{Y}_{2}}\bigg)+\big\|\tilde{v}\big\|_{\mathbb{Y}_{2}}\big\|\nabla\phi\big\|_{d-\lambda,\lambda}\\
&\lesssim\kappa\bigg(\big\|\tilde{u}\big\|_{\mathbb{Y}_{1}}+\big\|\tilde{v}\big\|_{\mathbb{Y}_{2}}\bigg)
\end{align*}
and
\begin{align*}
&\bigg\|\Phi_{2}(u_{1},v_{1},w_{1})-\Phi_{2}(u_{2},v_{2},w_{2})\bigg\|_{\mathbb{Y}_{2}}\\
&\lesssim\bigg(\big\|\tilde{u}\big\|_{\mathbb{Y}_{1}}\big\|v_{1}\big\|_{\mathbb{Y}_{2}}+
\big\|u_{2}\big\|_{\mathbb{Y}_{1}}\big\|\tilde{v}\big\|_{\mathbb{Y}_{2}}+\big\|\tilde{v}\big\|_{\mathbb{Y}_{2}}
\big\|w_{1}\big\|_{\mathbb{Y}_{3}}+\big\|v_{2}\big\|_{\mathbb{Y}_{2}}\big\|\tilde{w}\big\|_{\mathbb{Y}_{3}}
\bigg)\\
&\lesssim\kappa\bigg(\big\|\tilde{u}\big\|_{\mathbb{Y}_{1}}+\big\|\tilde{v}\big\|_{\mathbb{Y}_{2}}
+\big\|\tilde{w}\big\|_{\mathbb{Y}_{3}}\bigg),
\end{align*}
\begin{align*}
&\bigg\|\Phi_{3}(u_{1},v_{1},w_{1})-\Phi_{3}(u_{2},v_{2},w_{2})\bigg\|_{\mathbb{Y}_{2}}\\
&\lesssim\bigg(\big\|\tilde{u}\big\|_{\mathbb{Y}_{1}}\big\|w_{1}\big\|_{\mathbb{Y}_{3}}+
\big\|u_{2}\big\|_{\mathbb{Y}_{1}}\big\|\tilde{w}\big\|_{\mathbb{Y}_{3}}+\big\|\tilde{w}\big\|_{\mathbb{Y}_{3}}
\big\|v_{2}\big\|_{\mathbb{Y}_{2}}+\big\|w_{2}\big\|_{\mathbb{Y}_{3}}\big\|\tilde{v}\big\|_{\mathbb{Y}_{2}}
\bigg)\\
&\lesssim\kappa\bigg(\big\|\tilde{u}\big\|_{\mathbb{Y}_{1}}+\big\|\tilde{v}\big\|_{\mathbb{Y}_{2}}
+\big\|\tilde{w}\big\|_{\mathbb{Y}_{3}}\bigg).
\end{align*}
Therefore, note that the constant $\kappa$ is small enough, we get that
\begin{align*}
&\bigg\|\Phi\big(u_{1},v_{1}.w_{1}\big)-\Phi\big(u_{2},v_{2}.w_{2}\big)\bigg\|_{\mathbb{F}}\\
&\lesssim\bigg\|\Phi_{1}\big(u_{1},v_{1}.w_{1}\big)-\Phi_{1}\big(u_{2},v_{2}.w_{2}\big)\bigg\|_{\mathbb{Y}_{1}}
+\bigg\|\Phi_{2}\big(u_{1},v_{1}.w_{1}\big)-\Phi_{2}\big(u_{2},v_{2}.w_{2}\big)\bigg\|_{\mathbb{Y}_{2}}\\
&\quad +\bigg\|\Phi_{3}\big(u_{1},v_{1}.w_{1}\big)-\Phi_{3}\big(u_{2},v_{2}.w_{2}\big)\bigg\|_{\mathbb{Y}_{3}}\\
&\lesssim C\kappa\bigg(\big\|\tilde{u}\big\|_{\mathbb{Y}_{1}}+\big\|\tilde{v}\big\|_{\mathbb{Y}_{2}}
+\big\|\tilde{w}\big\|_{\mathbb{Y}_{3}}\bigg)<\frac{1}{2}
\bigg(\big\|\tilde{u}\big\|_{\mathbb{Y}_{1}}+\big\|\tilde{v}\big\|_{\mathbb{Y}_{2}}
+\big\|\tilde{w}\big\|_{\mathbb{Y}_{3}}\bigg),
\end{align*}
thus the operator $\Phi$ is a contraction operator. The proof is thus completed.
\end{proof}
\begin{theorem}\label{global mild solution exist}
Assuming conditions \eqref{canshufanwei1} and \eqref{canshufanwei2} are satisfied, and there exists a small positive constant $\kappa>0$ such that
\[
\big\|(u_{0},v_{0},\nabla w_{0})\big\|_{\dot{\mathcal{N}}^{-\beta_{1}}_{r_{1},\lambda,\infty}
\times\dot{\mathcal{N}}^{-\beta_{2}}_{r_{2},\lambda,\infty}\times\dot{\mathcal{N}}^{-\beta_{3}}_{r_{3},\lambda,\infty}}
+\big\|w_{0}\big\|_{\mathcal{M}^{2-\beta}_{\frac{d-\lambda}{2-\beta},\lambda}}+\big\|\nabla\phi\big\|_{d-\lambda,\lambda}\leq\kappa,
\]
the system \eqref{Eq:K-S-N-S} exist unique global mild solution $\big(u,v,w\big)\in \mathbb{F}_{\kappa}$.
\end{theorem}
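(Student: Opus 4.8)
The plan is to realize the solution as the unique fixed point of the mapping $\Phi=(\Phi_1,\Phi_2,\Phi_3)$ on the complete metric space $\mathbb{F}_\kappa$, via the contraction mapping principle. The essential analytic work has already been carried out in Lemma \ref{selfmap} and Lemma \ref{contraction operator}, so the theorem reduces to assembling these two facts within a standard Banach-space fixed-point argument.

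First I would record the reformulation of the problem: by the integral identities \eqref{Mild solution K-S-N-V} together with the definition of a mild solution, a triple $(u,v,w)$ is a mild solution of \eqref{Eq:K-S-N-S} if and only if it satisfies $\Phi(u,v,w)=(u,v,w)$, i.e. it is a fixed point of $\Phi$. Hence it suffices to produce a unique fixed point of $\Phi$ inside $\mathbb{F}_\kappa$. Next I would verify the functional-analytic setting. The space $\mathbb{F}=\mathbb{Y}_1\times\mathbb{Y}_2\times\mathbb{Y}_3$, equipped with the product norm $\|\cdot\|_{\mathbb{F}}$, is a Banach space, since each factor $\mathbb{Y}_j$ is assembled from the complete spaces $BC((0,\infty);X)$ with $X$ a Besov-Morrey or Morrey space. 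The ball $\mathbb{F}_\kappa$ is a closed subset of $\mathbb{F}$ and therefore, with the metric induced by $\|\cdot\|_{\mathbb{F}}$, a complete metric space; in particular the divergence-free constraint on $u$ and the $\mathcal{M}^{2-\beta}_{\frac{d-\lambda}{2-\beta},\lambda}$-regularity of $w$ are preserved under limits.

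With this in hand, the two lemmas close the argument. Under the smallness hypothesis on $(u_0,v_0,\nabla w_0,w_0,\nabla\phi)$, Lemma \ref{selfmap} shows that $\Phi$ maps $\mathbb{F}_\kappa$ into itself, and Lemma \ref{contraction operator} shows that $\Phi$ is a contraction on $\mathbb{F}_\kappa$ with Lipschitz constant bounded by $C\kappa<\tfrac12<1$. The Banach fixed-point theorem then furnishes a unique $(u,v,w)\in\mathbb{F}_\kappa$ with $\Phi(u,v,w)=(u,v,w)$, and by the reformulation above this is precisely the unique global mild solution of \eqref{Eq:K-S-N-S} in $\mathbb{F}_\kappa$.

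As for the main obstacle: once Lemmas \ref{selfmap} and \ref{contraction operator} are granted, there is essentially no analytic difficulty left, and the proof is a routine invocation of the contraction principle. The genuine difficulty has been absorbed into those lemmas, namely obtaining the time-space bilinear estimates for the Duhamel terms in the scaling-critical Besov-Morrey framework using Lemma \ref{Mittag estimats lemma}; the one point deserving explicit mention here is that uniqueness is asserted only within the class $\mathbb{F}_\kappa$, and that the closedness of $\mathbb{F}_\kappa$ is what guarantees the limit point inherits both the structural constraint $\operatorname{div}u=0$ and the prescribed decay profiles encoded in the weights $t^{\chi_j/2}$.
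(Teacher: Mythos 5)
Your proposal is correct and follows exactly the paper's route: the paper's own proof simply combines Lemma \ref{selfmap} and Lemma \ref{contraction operator} with the Banach fixed-point theorem on $\mathbb{F}_{\kappa}$. You merely spell out the routine details (completeness of $\mathbb{F}$, closedness of $\mathbb{F}_{\kappa}$, and the equivalence between fixed points of $\Phi$ and mild solutions) that the paper leaves implicit.
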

\begin{proof}
Combine Lemma \ref{selfmap}, Lemma \ref{contraction operator}, the Theorem \ref{global mild solution exist} is obvious. The proof is thus completed.
\end{proof}

\begin{remark}
The Besov-Morrey space encompasses measures that are concentrated on smooth manifolds of $\lambda$-dimension in $\mathbb{R}^{d}$. As a case in point, the initial data $n_{0}$ may belong to $\dot{\mathcal{N}}^{-\beta_{2}}_{r_{2},\lambda,\infty}$, representing such measures. Specifically, for $d=2$, it is permissible to select $\lambda=0$, with $n_{0}$ being a measure localized on a countable set of points-equivalently, a countable aggregation of Dirac deltas. In the scenario where $d=3$, it becomes relevant to consider measures concentrated on structures like filaments and rings, as well as more generally on smooth curves, as possible candidates for the initial data $n_{0}$. (refer to \rm{\cite{L. Ferreira}} for examples).
\end{remark}
\section{asymptotic behavior}
In this section, we consider the asymptotic behavior of solutions derived by the system \eqref{Eq:K-S-N-S}. We can construct the following Theorem.
\begin{theorem}
Let $(u_{1},v_{1},w_{1})$ and $(u_{2},v_{2},w_{2})$ be two global mild solutions of the system \eqref{Eq:K-S-N-S} determined by {\rm Theorem \ref{global mild solution exist}}, with initial values $(u_{1},v_{1},w_{1})|_{t=0}=(u_{10},v_{10},w_{10})$ and $(u_{2},v_{2},w_{2})|_{t=0}=(u_{20},v_{20},w_{20})$, respectively. We define
\begin{align*}
f(t):&=t^{\frac{\chi_{1}}{2}}\big\|\mathcal{S}_{\alpha,\beta}(t)(u_{10}-u_{20})\big\|_{q_{1},\lambda}
+t^{\frac{\chi_{2}}{2}}\big\|\mathcal{S}_{\alpha,\beta}(t)(v_{10}-v_{20})\big\|_{q_{2},\lambda}\\
&\quad +t^{\frac{\chi_{3}}{2}}\big\|\mathcal{S}_{\alpha,\beta}(t)(\nabla w_{10}-\nabla w_{20})\big\|_{q_{3},\lambda}+\big\|\mathcal{S}_{\alpha,\beta}(t)(w_{10}- w_{20})\big\|_{\mathcal{M}^{2-\beta}_{\frac{d-\lambda}{2-\beta},\lambda}}\\
&\quad +\big\|\mathcal{S}_{\alpha,\beta}(t)\big(u_{10}-u_{20},v_{10}-v_{20},\nabla w_{10}-\nabla w_{20}\big)\big\|_{\dot{\mathcal{N}}^{-\beta_{1}}_{r_{1},\lambda,\infty}\times\dot{\mathcal{N}}^{-\beta_{2}}_{r_{2},\lambda,\infty}
\times\dot{\mathcal{N}}^{-\beta_{3}}_{r_{3},\lambda,\infty}},
\end{align*}
and
\begin{align*}
g(t):&=t^{\frac{\chi_{1}}{2}}\big\|u_{1}-u_{2}\big\|_{\mathcal{M}_{q_{1},\lambda}(\mathbb{R}^{d})}
+t^{\frac{\chi_{2}}{2}}\big\|(v_{1}-v_{2})\big\|_{\mathcal{M}_{q_{2},\lambda}(\mathbb{R}^{d})}\\
&\quad+t^{\frac{\chi_{3}}{2}}\big\|\nabla w_{1}-\nabla w_{2}\big\|_{\mathcal{M}_{q_{3},\lambda}(\mathbb{R}^{d})}+\big\|w_{1}- w_{2}\big\|_{\mathcal{M}^{2-\beta}_{\frac{d-\lambda}{2-\beta},\lambda}(\mathbb{R}^{d})}\\
&\quad +\big\|\big(u_{1}-u_{2},v_{1}-v_{2},\nabla w_{1}-\nabla w_{2}\big)\big\|_{\dot{\mathcal{N}}^{-\beta_{1}}_{r_{1},\lambda,\infty}\times\dot{\mathcal{N}}^{-\beta_{2}}_{r_{2},\lambda,\infty}
\times\dot{\mathcal{N}}^{-\beta_{3}}_{r_{3},\lambda,\infty}}.
\end{align*}
Then, it holds that
$$
\lim_{t\rightarrow\infty}f(t)=0 \text{ if and only if } \lim_{t\rightarrow\infty}g(t)=0.
$$
\end{theorem}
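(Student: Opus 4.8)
The plan is to work from the Duhamel (integral) form of the two mild solutions, reduce the equivalence to a single decay statement for the nonlinear remainder, and prove that statement by rescaling the time-convolution integrals and invoking dominated convergence.

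First I would subtract the mild-solution identities \eqref{Mild solution K-S-N-V} for $(u_1,v_1,w_1)$ and $(u_2,v_2,w_2)$. Writing $(\tilde u,\tilde v,\tilde w)=(u_1-u_2,v_1-v_2,w_1-w_2)$, each difference splits as a linear part $\mathcal{S}_{\alpha,\beta}(t)$ applied to the corresponding initial-data difference — these are exactly the terms assembled in $f(t)$ — plus the Duhamel integrals already linearized in the proof of Lemma \ref{contraction operator}. Let $N(t)$ denote the sum of the time-weighted Morrey and Besov--Morrey norms (at the single instant $t$, with the same weights $t^{\chi_j/2}$ appearing in $g$) of these bilinear difference integrals. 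Applying the triangle inequality componentwise to the identity $\tilde u=\mathcal{S}_{\alpha,\beta}(t)(u_{10}-u_{20})-(\text{Duhamel}_u)$, and likewise for $\tilde v,\tilde w$, yields the two-sided bound
\[
|f(t)-g(t)|\ \le\ N(t),\qquad t>0 .
\]
Thus the theorem reduces to showing $N(t)\to0$ under either hypothesis.

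The core is a convergence lemma for the convolution integrals produced by Lemma \ref{contraction operator}. After the bilinear Morrey estimates of Lemma \ref{Mittag estimats lemma}, every contributing term takes the form $t^{\chi_j/2}\int_0^t(t-\theta)^{a-1}\theta^{-b}h(\theta)\,d\theta$, where $h(\theta)\le C\kappa\,g(\theta)$ (a solution factor bounded by $\kappa$, since both solutions lie in $\mathbb{F}_\kappa$, times a difference factor bounded by $g(\theta)$), the exponents satisfy $a>0$ and $b<1$, and the scaling identity $\tfrac{\chi_j}{2}+a-b=0$ holds (for the negative-regularity pieces one has $\chi_j=0$ and $a=b$); these are precisely the inequalities $0<a<1$, $0<b<1$ verified throughout the proof of Lemma \ref{selfmap} under \eqref{canshufanwei1}--\eqref{canshufanwei2}. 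Substituting $\theta=ts$ cancels the $t$-power by scale invariance and reduces each term to
\[
\int_0^1(1-s)^{a-1}s^{-b}\,h(ts)\,ds ,
\]
whose kernel is integrable on $(0,1)$. Since $h$ is bounded and $h(ts)\to0$ for each fixed $s$ whenever $g\to0$, dominated convergence gives decay; more generally the reverse Fatou inequality yields $\limsup_{t\to\infty}\int_0^1(1-s)^{a-1}s^{-b}h(ts)\,ds\le B(a,1-b)\,\limsup_{\theta\to\infty}h(\theta)$. Summing over the finitely many bilinear terms gives
\[
\limsup_{t\to\infty}N(t)\ \le\ C\kappa\,\limsup_{t\to\infty}g(t).
\]

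Finally I would close the equivalence. If $\lim_{t\to\infty}g(t)=0$, the display above gives $\limsup N(t)=0$, and $f(t)\le g(t)+N(t)\to0$. Conversely, if $\lim_{t\to\infty}f(t)=0$, set $L:=\limsup_{t\to\infty}g(t)$, which is finite because both solutions belong to $\mathbb{F}_\kappa$ and hence $g$ is bounded. Taking $\limsup$ in $g(t)\le f(t)+N(t)$ and using $\limsup N(t)\le C\kappa L$ yields $L\le C\kappa L$; choosing $\kappa$ small enough that $C\kappa<1$ (consistent with the smallness already imposed in Theorem \ref{global mild solution exist}) forces $L=0$, i.e. $\lim_{t\to\infty}g(t)=0$. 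I expect the main obstacle to be the bookkeeping in the convergence lemma: one must check, for each bilinear term of Steps 1--3 of Lemma \ref{selfmap} in its linearized difference form, that the exponents genuinely satisfy $a>0$, $b<1$ and that the weight $t^{\chi_j/2}$ exactly compensates the Beta-integral power, so that the rescaling collapses every integral to a fixed kernel on $(0,1)$ and the reverse Fatou inequality applies uniformly — these are the inequalities already isolated under \eqref{canshufanwei1}--\eqref{canshufanwei2}, so no new constraint on the parameters is introduced.
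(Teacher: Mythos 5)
Your proposal is correct and follows essentially the same route as the paper: subtract the two Duhamel representations, bound the bilinear remainder termwise by Beta-type convolution integrals whose $\limsup$ is at most $C\kappa\limsup_{t\to\infty}g(t)$, and absorb using the smallness of $\kappa$. The only (harmless) difference is technical: you rescale each integral to $(0,1)$ and apply reverse Fatou/dominated convergence directly, whereas the paper splits the integral at $\varrho t$, controls the near part by $\sup_{\varrho t\le\theta\le t}$ and the far part by a quantity $H(\varrho)$ with $H(\varrho)\to0$ as $\varrho\to0^{+}$ — both devices yield the same absorption inequality.
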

\begin{proof}
For Theorem \ref{global mild solution exist}, we have $\left\|\big(u_{1},v_{1},w_{1}\big)\right\|_{\mathbb{F}}\lesssim \kappa$ and $\left\|\big(u_{2},v_{2},w_{2}\big)\right\|_{\mathbb{F}}\lesssim \kappa$. We denote that
$\big(\tilde{u},\tilde{v},\tilde{w}\big)=\big(u_{1}-u_{2},v_{1}-v_{2},w_{1}-w_{2}\big)$, thus we have
\begin{align*}
&u_{1}-u_{2}\\&=\mathcal{S}_{\alpha,\beta}(t)(u_{10}-u_{20})\\
&\quad+\int_{0}^{t}(t-\theta)^{\alpha-1}\mathcal{P}_{\alpha,\beta}(t-\theta)\mathbb{P}
\left(\tilde{u}\cdot\nabla u_{1}+u_{2}\nabla\tilde{u} + \tilde{v}\nabla\phi\right) \, d\theta,\\
&v_{1}-v_{2}\\&=\mathcal{S}_{\alpha,\beta}(t)(v_{10}-v_{20})\\
&\quad+\int_{0}^{t}(t-\theta)^{\alpha-1}\mathcal{P}_{\alpha,\beta}(t-\theta)\left(\tilde{u}\cdot\nabla v_{1}+u_{2}\cdot\nabla \tilde{v} + \nabla\cdot\big(\tilde{v}\nabla(B(w_{1}))+v_{2}\nabla(B(\tilde{w})\big)\right) \, d\theta,\\
&w_{1}-w_{2}\\&=\mathcal{S}_{\alpha,\beta}(t)(w_{10}-w_{20})\\
&\quad+\int_{0}^{t}(t-\theta)^{\alpha-1}\mathcal{P}_{\alpha,\beta}(t-\theta)
\left(\tilde{u}\nabla w_{1}+u_{2}\nabla \tilde{w}+\big((-\Delta)^{\frac{2-\beta}{2}}\tilde{w}\big)v_{1}+\big((-\Delta)^{\frac{2-\beta}{2}}w_{2}\big)\tilde{v}\right)\, d\theta.
\end{align*}
If $\lim_{t\rightarrow\infty}f(t)=0$, we  will divide the following steps to prove $\lim_{t\rightarrow\infty}g(t)=0$.

\textbf{Step 1.} We estimate the norm
$$
\big\|\big(u_{1}-u_{2},v_{1}-v_{2},\nabla w_{1}-\nabla w_{2}\big)\big\|_{\dot{\mathcal{N}}^{-\beta_{1}}_{r_{1},\lambda,\infty}\times\dot{\mathcal{N}}^{-\beta_{2}}_{r_{2},\lambda,\infty}
\times\dot{\mathcal{N}}^{-\beta_{3}}_{r_{3},\lambda,\infty}}.
$$
Note that
\begin{align*}
&\big\|\big(u_{1}-u_{2},v_{1}-v_{2},\nabla w_{1}-\nabla w_{2}\big)\big\|_{\dot{\mathcal{N}}^{-\beta_{1}}_{r_{1},\lambda,\infty}\times\dot{\mathcal{N}}^{-\beta_{2}}_{r_{2},\lambda,\infty}
\times\dot{\mathcal{N}}^{-\beta_{3}}_{r_{3},\lambda,\infty}}\\
&\lesssim\big\|\mathcal{S}_{\alpha,\beta}(t)\big(u_{10}-u_{20},n_{10}-n_{20},\nabla c_{10}-\nabla c_{20}\big)\big\|_{\dot{\mathcal{N}}^{-\beta_{1}}_{r_{1},\lambda,\infty}\times\dot{\mathcal{N}}^{-\beta_{2}}_{r_{2},\lambda,\infty}
\times\dot{\mathcal{N}}^{-\beta_{3}}_{r_{3},\lambda,\infty}}\\
&\quad+ J_{1}+J_{2}+J_{3},
\end{align*}
where
\begin{align*}
J_{1}&=\left\|\int_{0}^{t}(t-\theta)^{\alpha-1}\mathcal{P}_{\alpha,\beta}(t-\theta)\mathbb{P}
\left(\tilde{u}\cdot\nabla u_{1}+u_{2}\nabla\tilde{u} + \tilde{v}\nabla\phi\right) \, d\theta\right\|_{\dot{\mathcal{N}}^{-\beta_{1}}_{r_{1},\lambda,\infty}},\\
J_{2}&=\bigg\|\int_{0}^{t}(t-\theta)^{\alpha-1}\mathcal{P}_{\alpha,\beta}(t-\theta)\\
&\qquad\times\left(\tilde{u}\cdot\nabla v_{1}+u_{2}\cdot\nabla \tilde{v} + \nabla\cdot\big(\tilde{v}\nabla(B(w_{1}))+v_{2}\nabla(B(\tilde{w})\big)\right) \, d\theta\bigg\|_{\dot{\mathcal{N}}^{-\beta_{2}}_{r_{2},\lambda,\infty}},\\
J_{3}&=\bigg\|\int_{0}^{t}(t-\theta)^{\alpha-1}\mathcal{P}_{\alpha,\beta}(t-\theta)\\
&\qquad\times\left(\tilde{u}\nabla w_{1}+u_{2}\nabla \tilde{w}+\big((-\Delta)^{\frac{2-\beta}{2}}\tilde{w}\big)v_{1}+\big((-\Delta)^{\frac{2-\beta}{2}}w_{2}\big)\tilde{v}\right)\, d\theta\bigg\|_{\dot{\mathcal{N}}^{-\beta_{3}}_{r_{3},\lambda,\infty}}.
\end{align*}
For $J_{1}$ and any $0<\varrho<1$, combine above Lemma \ref{fractional heat kenel lemma}, Lemma \ref{selfmap}, we can get
\begin{align*}
J_{1}&=\left\|\int_{0}^{t}(t-\theta)^{\alpha-1}\mathcal{P}_{\alpha,\beta}(t-\theta)\mathbb{P}
\left(\tilde{u}\cdot\nabla u_{1}+u_{2}\nabla\tilde{u} + \tilde{v}\nabla\phi\right) \, d\theta\right\|_{\dot{\mathcal{N}}^{-\beta_{1}}_{r_{1},\lambda,\infty}}\\
&\lesssim\left\|\int_{0}^{t}(t-\theta)^{\alpha-1}\mathcal{P}_{\alpha,\beta}(t-\theta)\mathbb{P}
\left(\tilde{u}\cdot\nabla u_{1}+u_{2}\nabla\tilde{u} + \tilde{v}\nabla\phi\right) \, d\theta\right\|_{\mathcal{M}_{\frac{d-\lambda}{\beta-1},\lambda}}\\
&\lesssim\left(\int_{0}^{\varrho t}+\int_{\varrho t}^{t}\right)(t-\theta)^{2\alpha-\frac{2\alpha}{\beta}-\frac{2\alpha(d-\lambda)}{\beta q_{1}}-1}\big\|\tilde{u}(\theta)\big\|_{q_{1},\lambda}\left(\big\|u_{1}(\theta)\big\|_{q_{1},\lambda}
+\big\|u_{2}(\theta)\big\|_{q_{1},\lambda}\right)d\theta\\
&\quad+\left(\int_{0}^{\varrho t}+\int_{\varrho t}^{t}\right)(t-\theta)^{2\alpha-\frac{2\alpha}{\beta}-\frac{2\alpha(d-\lambda)}{\beta q_{2}}-1}\big\|\tilde{v}(\theta)\big\|_{q_{2},\lambda}\big\|\nabla\phi\big\|_{d-\lambda,\lambda}d\theta\\
&\lesssim\left(\big\|u_{1}\big\|_{\mathbb{X}_{1}}+\big\|u_{2}\big\|_{\mathbb{X}_{1}}\right)\int_{0}^{\varrho t}(t-\theta)^{2\alpha-\frac{2\alpha}{\beta}-\frac{2\alpha(d-\lambda)}{\beta q_{1}}-1}\theta^{-\chi_{1}}\left(\theta^{\frac{\chi_{1}}{2}}\big\|\tilde{u}(\theta)\big\|_{q_{1},\lambda}\right)d\theta\\
&\quad +\big\|\nabla\phi\big\|_{d-\lambda,\lambda}\int_{0}^{\varrho t}(t-\theta)^{2\alpha-\frac{2\alpha}{\beta}-\frac{2\alpha(d-\lambda)}{\beta q_{2}}-1}\theta^{-\frac{\chi_{2}}{2}}\left(\theta^{\frac{\chi_{2}}{2}}\big\|\tilde{v}(\theta)\big\|_{q_{2},\lambda}\right)d\theta\\
&\quad +\left(\big\|u_{1}\big\|_{\mathbb{X}_{1}}+\big\|u_{2}\big\|_{\mathbb{X}_{1}}\right)\left(\sup_{\varrho t\leq \theta\leq t}
\big\|\tilde{u}(\theta)\big\|_{q_{1},\lambda}\right)+\big\|\nabla\phi\big\|_{d-\lambda,\lambda}\left(\sup_{\varrho t\leq \theta\leq t}
\big\|\tilde{v}(\theta)\big\|_{q_{2},\lambda}\right)\\
&\lesssim \kappa\int_{0}^{\varrho t}(t-\theta)^{2\alpha-\frac{2\alpha}{\beta}-\frac{2\alpha(d-\lambda)}{\beta q_{1}}-1}\theta^{-\chi_{1}}\left(\theta^{\frac{\chi_{1}}{2}}\big\|\tilde{u}(\theta)\big\|_{q_{1},\lambda}\right)d\theta\\
&\quad +\kappa\int_{0}^{\varrho t}(t-\theta)^{2\alpha-\frac{2\alpha}{\beta}-\frac{2\alpha(d-\lambda)}{\beta q_{2}}-1}\theta^{-\frac{\chi_{2}}{2}}\left(\theta^{\frac{\chi_{2}}{2}}\big\|\tilde{v}(\theta)\big\|_{q_{2},\lambda}\right)d\theta\\
&\quad +\kappa
\left(\left(\sup_{\varrho t\leq \theta\leq t}\theta^{\frac{\chi_{1}}{2}}
\big\|\tilde{u}(\theta)\big\|_{q_{1},\lambda}\right)+\left(\sup_{\varrho t\leq \theta\leq t}
\theta^{\frac{\chi_{2}}{2}}\big\|\tilde{v}(\theta)\big\|_{q_{2},\lambda}\right)\right),
\end{align*}
similar, we can get that
\begin{align*}
J_{2}&=\bigg\|\int_{0}^{t}(t-\theta)^{\alpha-1}\mathcal{P}_{\alpha,\beta}(t-\theta)\left(\tilde{u}\cdot\nabla v_{1}+u_{2}\cdot\nabla \tilde{v} + \nabla\cdot\big(\tilde{v}\nabla(B(w_{1}))+v_{2}\nabla(B(\tilde{w})\big)\right) \, d\theta\bigg\|_{\dot{\mathcal{N}}^{-\beta_{2}}_{r_{2},\lambda,\infty}}\\
&\lesssim\bigg\|\int_{0}^{t}(t-\theta)^{\alpha-1}\mathcal{P}_{\alpha,\beta}(t-\theta)\left(\tilde{u}\cdot\nabla v_{1}+u_{2}\cdot\nabla \tilde{v} + \nabla\cdot\big(\tilde{v}\nabla(B(w_{1}))+v_{2}\nabla(B(\tilde{w})\big)\right) \, d\theta\bigg\|_{\mathcal{M}_{\frac{d-\lambda}{2\beta-2},\lambda}}\\
&\lesssim\kappa\int_{0}^{\varrho t}(t-\theta)^{3\alpha-\frac{3\alpha}{\beta}-\frac{\alpha(d-\lambda)}{\beta q_{1}}-\frac{\alpha(d-\lambda)}{\beta q_{2}}-1}\theta^{-\frac{\chi_{1}+\chi_{2}}{2}}\left(\theta^{\frac{\chi_{1}}{2}}\big\|\tilde{u}(\theta)\big\|_{q_{1},\lambda}+
\theta^{\frac{\chi_{2}}{2}}\big\|\tilde{v}(\theta)\big\|_{q_{2},\lambda}\right)d\theta\\
&\quad +\kappa\int_{0}^{\varrho t}(t-\theta)^{2\alpha-\frac{\alpha}{\beta}-\frac{\alpha(d-\lambda)}{\beta q_{2}}-\frac{\alpha(d-\lambda)}{\beta q_{3}}-1}\theta^{-\frac{\chi_{2}+\chi_{3}}{2}}\left(\theta^{\frac{\chi_{2}}{2}}\big\|\tilde{v}(\theta)\big\|_{q_{2},\lambda}+
\theta^{\frac{\chi_{3}}{2}}\big\|\nabla\tilde{w}(\theta)\big\|_{q_{3},\lambda}\right)d\theta\\
&\quad +\kappa\left(\sup_{\varrho t\leq \theta\leq t}\left(\theta^{\frac{\chi_{1}}{2}}\big\|\tilde{u}(\theta)\big\|_{q_{1},\lambda}\right)
+\sup_{\varrho t\leq \theta\leq t}\left(\theta^{\frac{\chi_{2}}{2}}\big\|\tilde{v}(\theta)\big\|_{q_{2},\lambda}\right)
+\sup_{\varrho t\leq \theta\leq t}\left(\theta^{\frac{\chi_{3}}{2}}\big\|\nabla\tilde{w}(\theta)\big\|_{q_{3},\lambda}\right)\right),
\end{align*}
and
\begin{align*}
J_{3}&=\bigg\|\int_{0}^{t}(t-\theta)^{\alpha-1}\nabla \mathcal{P}_{\alpha,\beta}(t-\theta)\left(\tilde{u}\nabla w_{1}+u_{2}\nabla \tilde{w}+\big((-\Delta)^{\frac{2-\beta}{2}}\tilde{w}\big)v_{1}+\big((-\Delta)^{\frac{2-\beta}{2}}w_{2}\big)\tilde{v}\right)\, d\theta\bigg\|_{\dot{\mathcal{N}}^{-\beta_{3}}_{r_{3},\lambda,\infty}}\\
&\lesssim\bigg\|\int_{0}^{t}(t-\theta)^{\alpha-1}\nabla \mathcal{P}_{\alpha,\beta}(t-\theta)\left(\tilde{u}\nabla w_{1}+u_{2}\nabla \tilde{w}+\big((-\Delta)^{\frac{2-\beta}{2}}\tilde{w}\big)v_{1}+\big((-\Delta)^{\frac{2-\beta}{2}}w_{2}\big)\tilde{v}\right)\, d\theta\bigg\|_{\mathcal{M}_{d-\lambda,\lambda}}\\
&\lesssim\kappa\int_{0}^{\varrho t}(t-\theta)^{\alpha-\frac{\alpha(d-\lambda)}{\beta q_{1}}-\frac{\alpha(d-\lambda)}{\beta q_{3}}-1}
\theta^{-\frac{\chi_{1}+\chi_{3}}{2}}\left(\theta^{\frac{\chi_{1}}{2}}\big\|\tilde{u}(\theta)\big\|_{q_{1},\lambda}
+\theta^{\frac{\chi_{3}}{2}}\big\|\nabla\tilde{w}(\theta)\big\|_{q_{3},\lambda}\right)d\theta\\
&\quad +\kappa\int_{0}^{\varrho t}(t-\theta)^{2\alpha-\frac{2\alpha}{\beta}-\frac{2\alpha(d-\lambda)}{\beta q_{2}}-1}
\theta^{-\frac{\chi_{2}}{2}}\left(\theta^{\frac{\chi_{2}}{2}}\big\|\tilde{v}(\theta)\big\|_{q_{2},\lambda}
+\big\|\tilde{w}(\theta)\big\|_{\mathcal{M}^{2-\beta}_{\frac{d-\lambda}{2-\beta},\lambda}}\right)d\theta\\
&\quad +\kappa\bigg(\sup_{\varrho t\leq \theta\leq t}\left(\theta^{\frac{\chi_{1}}{2}}\big\|\tilde{u}(\theta)\big\|_{\mathcal{M}_{q_{1},\lambda}}\right)
+\sup_{\varrho t\leq \theta\leq t}\left(\theta^{\frac{\chi_{3}}{2}}\big\|\nabla\tilde{w}(\theta)\big\|_{\mathcal{M}_{q_{3},\lambda}}\right)+\sup_{\varrho t\leq \theta\leq t}\left(\theta^{\frac{\chi_{2}}{2}}\big\|\tilde{v}(\theta)\big\|_{\mathcal{M}_{q_{2},\lambda}}\right)
\\
&\quad+\sup_{\varrho t\leq \theta\leq t}\big(\big\|\tilde{w}(\theta)\big\|_{\mathcal{M}^{2-\beta}_{\frac{d-\lambda}{2-\beta},\lambda}}\big)\bigg).
\end{align*}
Thus, we obtain that
\begin{align*}
&\big\|\big(u_{1}-u_{2},v_{1}-v_{2},\nabla w_{1}-\nabla w_{2}\big)\big\|_{\dot{\mathcal{N}}^{-\beta_{1}}_{r_{1},\lambda,\infty}\times\dot{\mathcal{N}}^{-\beta_{2}}_{r_{2},\lambda,\infty}
\times\dot{\mathcal{N}}^{-\beta_{3}}_{r_{3},\lambda,\infty}}\\
&\lesssim\big\|\mathcal{S}_{\alpha,\beta}(t)\big(u_{10}-u_{20},v_{10}-v_{20},\nabla w_{10}-\nabla w_{20}\big)\big\|_{\dot{\mathcal{N}}^{-\beta_{1}}_{r_{1},\lambda,\infty}\times\dot{\mathcal{N}}^{-\beta_{2}}_{r_{2},\lambda,\infty}
\times\dot{\mathcal{N}}^{-\beta_{3}}_{r_{3},\lambda,\infty}}\\
&\quad +\kappa\int_{0}^{\varrho}(1-\theta)^{2\alpha-\frac{2\alpha}{\beta}-\frac{2\alpha(d-\lambda)}{\beta q_{1}}-1}\theta^{-\chi_{1}}\left((t\theta)^{\frac{\chi_{1}}{2}}\big\|\tilde{u}(t\theta)\big\|_{\mathcal{M}_{q_{1},\lambda}}\right)d\theta\\
&\quad +\kappa\int_{0}^{\varrho}(1-\theta)^{2\alpha-\frac{2\alpha}{\beta}-\frac{2\alpha(d-\lambda)}{\beta q_{2}}-1}\theta^{-\frac{\chi_{2}}{2}}\left((t\theta)^{\frac{\chi_{2}}{2}}\big\|\tilde{v}(t\theta)\big\|_{\mathcal{M}_{q_{2},\lambda}}\right)d\theta\\
&\quad +\kappa\int_{0}^{\varrho}(1-\theta)^{3\alpha-\frac{3\alpha}{\beta}-\frac{\alpha(d-\lambda)}{\beta q_{1}}-\frac{\alpha(d-\lambda)}{\beta q_{2}}-1}\theta^{-\frac{\chi_{1}+\chi_{2}}{2}}\left((t\theta)^{\frac{\chi_{1}}{2}}\big\|\tilde{u}(t\theta)\big\|_{\mathcal{M}_{q_{1},\lambda}}+
(t\theta)^{\frac{\chi_{2}}{2}}\big\|\tilde{v}(t\theta)\big\|_{\mathcal{M}_{q_{2},\lambda}}\right)d\theta\\
&\quad +\kappa\int_{0}^{\varrho}(1-\theta)^{2\alpha-\frac{\alpha}{\beta}-\frac{\alpha(d-\lambda)}{\beta q_{2}}-\frac{\alpha(d-\lambda)}{\beta q_{3}}-1}\theta^{-\frac{\chi_{2}+\chi_{3}}{2}}\left((t\theta)^{\frac{\chi_{3}}{2}}\big\|\nabla\tilde{w}(t\theta)\big\|_{\mathcal{M}_{q_{3},\lambda}}+
(t\theta)^{\frac{\chi_{2}}{2}}\big\|\tilde{v}(t\theta)\big\|_{\mathcal{M}_{q_{2},\lambda}}\right)d\theta\\
&\quad+\kappa\int_{0}^{\varrho}(1-\theta)^{\alpha-\frac{\alpha(d-\lambda)}{\beta q_{1}}-\frac{\alpha(d-\lambda)}{\beta q_{3}}-1}
s^{-\frac{\chi_{1}+\chi_{3}}{2}}\left((ts)^{\frac{\chi_{3}}{2}}\big\|\nabla\tilde{c}(t\theta)\big\|_{\mathcal{M}_{q_{3},\lambda}}
+(t\theta)^{\frac{\chi_{1}}{2}}\big\|\tilde{u}(t\theta)\big\|_{\mathcal{M}_{q_{1},\lambda}}\right)d\theta\\
&\quad +\kappa\int_{0}^{\varrho}(1-\theta)^{2\alpha-\frac{2\alpha}{\beta}-\frac{2\alpha(d-\lambda)}{\beta q_{2}}-1}
\theta^{-\frac{\chi_{2}}{2}}\left((t\theta)^{\frac{\chi_{2}}{2}}\big\|\tilde{v}(t\theta)\big\|_{\mathcal{M}_{q_{2},\lambda}}
+\big\|\tilde{w}(t\theta)\big\|_{\mathcal{M}^{2-\beta}_{\frac{d-\lambda}{2-\beta},\lambda}}\right)d\theta\\
&\quad +\kappa\bigg(\sup_{\varrho t\leq \theta\leq t}\left(\theta^{\frac{\chi_{1}}{2}}\big\|\tilde{u}(\theta)\big\|_{\mathcal{M}_{q_{1},\lambda}(\mathbb{R}^{d})}\right)
+\sup_{\varrho t\leq \theta\leq t}\left(\theta^{\frac{\chi_{3}}{2}}\big\|\nabla\tilde{w}(\theta)\big\|_{\mathcal{M}_{q_{3},\lambda}(\mathbb{R}^{d})}\right)+\sup_{\varrho t\leq \theta\leq t}\left(\theta^{\frac{\chi_{2}}{2}}\big\|\tilde{v}(\theta)\big\|_{\mathcal{M}_{q_{2},\lambda}(\mathbb{R}^{d})}\right)\\
&\quad+\sup_{\varrho t\leq \theta\leq t}\big(\big\|\tilde{w}(\theta)\big\|_{\mathcal{M}^{2-\beta}_{\frac{d-\lambda}{2-\beta},\lambda}}\big)\bigg).
\end{align*}

\textbf{Step 2.} We estimate the norm
$$
\left\|\left(t^{\frac{\chi_{1}}{2}}\left(u_{1}-u_{2}\right),t^{\frac{\chi_{2}}{2}}\left(v_{1}-v_{2}\right),
t^{\frac{\chi_{3}}{2}}\left(\nabla w_{1}-\nabla w_{2}\right),w_{1}-w_{2}\right)\right\|_{\mathcal{M}_{q_{1},\lambda}\times\mathcal{M}_{q_{2},\lambda}\times\mathcal{M}_{q_{3},\lambda}
\times\mathcal{M}^{2-\beta}_{\frac{d-\lambda}{2-\beta},\lambda}}.
$$
Note that
\begin{align*}
&\big\|\big(u_{1}-u_{2},v_{1}-v_{2},\nabla w_{1}-\nabla w_{2},w_{1}-w_{2}\big)\big\|_{\mathcal{M}_{q_{1},\lambda}\times\mathcal{M}_{q_{2},\lambda}\times\mathcal{M}_{q_{3},\lambda}
\times\mathcal{M}^{2-\beta}_{\frac{d-\lambda}{2-\beta},\lambda}}\\
&\lesssim\big\|\mathcal{S}_{\alpha,\beta}(t)\big(u_{10}-u_{20},v_{10}-v_{20},\nabla w_{10}-\nabla w_{20},w_{10}-w_{20}\big)\big\|_{\mathcal{M}_{q_{1},\lambda}\times\mathcal{M}_{q_{2},\lambda}\times\mathcal{M}_{q_{3},\lambda}
\times\mathcal{M}^{2-\beta}_{\frac{d-\lambda}{2-\beta},\lambda}}\\
&\quad+ I_{1}+I_{2}+I_{3}+I_{4},
\end{align*}
where
\begin{align*}
I_{1}&=\left\|\int_{0}^{t}(t-\theta)^{\alpha-1}\mathcal{P}_{\alpha,\beta}(t-\theta)\mathbb{P}
\left(\tilde{u}\cdot\nabla u_{1}+u_{2}\nabla\tilde{u} + \tilde{v}\nabla\phi\right) \, d\theta\right\|_{\mathcal{M}_{q_{1},\lambda}},\\
I_{2}&=\bigg\|\int_{0}^{t}(t-\theta)^{\alpha-1}\mathcal{P}_{\alpha,\beta}(t-\theta)\left(\tilde{u}\cdot\nabla v_{1}+u_{2}\cdot\nabla \tilde{v} + \nabla\cdot\big(\tilde{v}\nabla(B(w_{1}))+v_{2}\nabla(B(\tilde{w})\big)\right) \, d\theta\bigg\|_{\mathcal{M}_{q_{2},\lambda}},\\
I_{3}&=\bigg\|\int_{0}^{t}(t-\theta)^{\alpha-1}\nabla \mathcal{P}_{\alpha,\beta}(t-\theta)\left(\tilde{u}\nabla w_{1}+u_{2}\nabla \tilde{w}+\big((-\Delta)^{\frac{2-\beta}{2}}\tilde{w}\big)v_{1}+\big((-\Delta)^{\frac{2-\beta}{2}}w_{2}\big)\tilde{v}\right)\, d\theta\bigg\|_{\mathcal{M}_{q_{3},\lambda}},\\
I_{4}&=\bigg\|\int_{0}^{t}(t-\theta)^{\alpha-1}\mathcal{P}_{\alpha,\beta}(t-\theta)\left(\tilde{u}\nabla w_{1}+u_{2}\nabla \tilde{w}+\big((-\Delta)^{\frac{2-\beta}{2}}\tilde{w}\big)v_{1}+\big((-\Delta)^{\frac{2-\beta}{2}}w_{2}\big)\tilde{v}\right)\, d\theta\bigg\|_{\mathcal{M}^{2-\beta}_{\frac{d-\lambda}{2-\beta},\lambda}}.
\end{align*}
For $I_{1}$ and any $0<\varrho<1$, combine above Lemma \ref{fractional heat kenel lemma}, Lemma \ref{selfmap}, we can get
\begin{align*}
I_{1}&=\left\|\int_{0}^{t}(t-\theta)^{\alpha-1}\mathcal{P}_{\alpha,\beta}(t-\theta)\mathbb{P}
\left(\tilde{u}\cdot\nabla u_{1}+u_{2}\nabla\tilde{u} + \tilde{v}\nabla\phi\right) \, d\theta\right\|_{\mathcal{M}_{q_{1},\lambda}}\\
&\lesssim\left(\int_{0}^{\varrho t}+\int_{\varrho t}^{t}\right)(t-\theta)^{\alpha-\frac{\alpha}{\beta}-\frac{\alpha(d-\lambda)}{\beta}(\frac{2}{q_{1}}-\frac{1}{q_{1}})-1}
\left(\big\|u_{1}(\theta)\otimes\tilde{u}(\theta)\big\|_{\frac{q_{1}}{2},\lambda}
+\big\|u_{2}(\theta)\otimes\tilde{u}(\theta)\big\|_{\frac{q_{1}}{2},\lambda}\right)d\theta\\
&\quad +\left(\int_{0}^{\varrho t}+\int_{\varrho t}^{t}\right)(t-\theta)^{\alpha-\frac{\alpha(d-\lambda)}{\beta}(\frac{1}{q_{2}}+\frac{1}{d-\lambda}-\frac{1}{q_{1}})-1}
\big\|\tilde{v}(\theta)\big\|_{q_{2},\lambda}\big\|\nabla\phi\big\|_{d-\lambda,\lambda}d\theta\\
&\lesssim\kappa\int_{0}^{\varrho t}(t-\theta)^{\alpha-\frac{\alpha}{\beta}-\frac{\alpha(d-\lambda)}{\beta}(\frac{2}{q_{1}}-\frac{1}{q_{1}})-1}\theta^{-\chi_{1}}
\left(\theta^{\frac{\chi_{1}}{2}}\big\|\tilde{u}(\theta)\big\|_{q_{1},\lambda}\right)d\theta\\
&\quad +\kappa\int_{0}^{\varrho t}(t-\theta)^{\alpha-\frac{\alpha(d-\lambda)}{\beta}(\frac{1}{q_{2}}+\frac{1}{d-\lambda}-\frac{1}{q_{1}})-1}
\theta^{-\frac{\chi_{2}}{2}}\left(\theta^{\frac{\chi_{2}}{2}}\big\|\tilde{v}(\theta)\big\|_{q_{2},\lambda}\right)d\theta\\
&\quad +\kappa t^{-\alpha+\frac{\alpha}{\beta}+\frac{\alpha(d-\lambda)}{\beta q_{1}}}\left(\sup_{\varrho t\leq \theta\leq t}\left(\theta^{\frac{\chi_{1}}{2}}\big\|\tilde{u}(\theta)\big\|_{q_{1},\lambda}\right)+\sup_{\varrho t\leq \theta\leq t}\left(\theta^{\frac{\chi_{2}}{2}}\big\|\tilde{v}(\theta)\big\|_{q_{2},\lambda}\right)\right),
\end{align*}
similar, we also have that
\begin{align*}
I_{2}&=\bigg\|\int_{0}^{t}(t-\theta)^{\alpha-1}\mathcal{P}_{\alpha,\beta}(t-\theta)\left(\tilde{u}\cdot\nabla v_{1}+u_{2}\cdot\nabla \tilde{v} + \nabla\cdot\big(\tilde{v}\nabla(B(w_{1}))+v_{2}\nabla(B(\tilde{w})\big)\right) \, d\theta\bigg\|_{\mathcal{M}_{q_{2},\lambda}}\\
&\lesssim\kappa\int_{0}^{\varrho t}(t-\theta)^{\alpha-\frac{\alpha}{\beta}-\frac{\alpha(d-\lambda)}{\beta q_{1}}-1}\theta^{-\frac{\chi_{1}+\chi_{2}}{2}}\left(\theta^{\frac{\chi_{1}}{2}}\big\|\tilde{u}(\theta)\big\|_{q_{1},\lambda}
+\theta^{\frac{\chi_{2}}{2}}\big\|\tilde{v}(\theta)\big\|_{q_{2},\lambda}\right)d\theta\\
&\quad +\kappa\int_{0}^{\varrho t}(t-\theta)^{\frac{\alpha}{\beta}-\frac{\alpha(d-\lambda)}{\beta q_{3}}-1}
\theta^{-\frac{\chi_{2}+\chi_{3}}{2}}\left(\theta^{\frac{\chi_{2}}{2}}\big\|\tilde{v}(\theta)\big\|_{q_{2},\lambda}
+\theta^{\frac{\chi_{3}}{2}}\big\|\nabla\tilde{w}(\theta)\big\|_{q_{3},\lambda}\right)d\theta\\
&\quad +\kappa t^{-2\alpha+\frac{2\alpha}{\beta}+\frac{\alpha(d-\lambda)}{\beta q_{2}}}\bigg(
\sup_{\varrho t\leq \theta\leq t}\left(\theta^{\frac{\chi_{1}}{2}}\big\|\tilde{u}(\theta)\big\|_{q_{1},\lambda}\right)+\sup_{\varrho t\leq \theta\leq t}\left(\theta^{\frac{\chi_{2}}{2}}\big\|\tilde{v}(\theta)\big\|_{q_{2},\lambda}\right)\\
&\quad +\sup_{\varrho t\leq \theta\leq t}\left(\theta^{\frac{\chi_{3}}{2}}\big\|\nabla\tilde{w}(\theta)\big\|_{q_{3},\lambda}\right)\bigg),
\end{align*}
\begin{align*}
I_{3}&=\bigg\|\int_{0}^{t}\nabla(t-\theta)^{\alpha-1}\mathcal{P}_{\alpha,\beta}(t-\theta)\left(\nabla w_{1}\cdot\tilde{u}+\nabla \tilde{w}\cdot u_{2}+\big((-\Delta)^{\frac{2-\beta}{2}}\tilde{w}\big)v_{1}+\big((-\Delta)^{\frac{2-\beta}{2}}w_{2}\big)\tilde{v}\right)\, d\theta\bigg\|_{\mathcal{M}_{q_{3},\lambda}}\\
&\lesssim\kappa\int_{0}^{\varrho t}(t-\theta)^{\alpha-\frac{\alpha}{\beta}-\frac{\alpha(d-\lambda)}{\beta q_{1}}-1}\theta^{-\frac{\chi_{1}+\chi_{3}}{2}}\left(\theta^{\frac{\chi_{1}}{2}}\big\|\tilde{u}(\theta)\big\|_{q_{1},\lambda}
+\theta^{\frac{\chi_{3}}{2}}\big\|\nabla\tilde{w}(\theta)\big\|_{q_{3},\lambda}\right)d\theta\\
&\quad +\kappa\int_{0}^{\varrho t}(t-\theta)^{2\alpha-\frac{3\alpha}{\beta}-\frac{\alpha(d-\lambda)}{\beta q_{2}}+\frac{\alpha(d-\lambda)}{\beta q_{3}}-1}\theta^{-\frac{\chi_{2}}{2}}\left(\theta^{\frac{\chi_{2}}{2}}\big\|\tilde{v}(\theta)\big\|_{q_{2},\lambda}+\big\|\tilde{w}(\theta)
\big\|_{\mathcal{M}^{2-\beta}_{\frac{d-\lambda}{2-\beta},\lambda}}\right)d\theta\\
&\quad +\kappa t^{-\frac{\alpha}{\beta}+\frac{\alpha(d-\lambda)}{\beta q_{3}}}\bigg(
\sup_{\varrho t\leq \theta\leq t}\left(\theta^{\frac{\chi_{1}}{2}}\big\|\tilde{u}(\theta)\big\|_{q_{1},\lambda}\right)+\sup_{\varrho t\leq \theta\leq t}\left(\theta^{\frac{\chi_{2}}{2}}\big\|\tilde{v}(\theta)\big\|_{q_{2},\lambda}\right)\\
&\quad +\sup_{\varrho t\leq \theta\leq t}\left(\theta^{\frac{\chi_{3}}{2}}\big\|\nabla\tilde{w}(\theta)\big\|_{q_{3},\lambda}\right)+\sup_{\varrho t\leq \theta\leq t}\big\|\tilde{w}(\theta)
\big\|_{\mathcal{M}^{2-\beta}_{\frac{d-\lambda}{2-\beta},\lambda}}\bigg),
\end{align*}
and
\begin{align*}
I_{4}&=\bigg\|\int_{0}^{t}(t-\theta)^{\alpha-1}\mathcal{P}_{\alpha,\beta}(t-\theta)\left(\tilde{u}\nabla w_{1}+u_{2}\nabla \tilde{w}+\big((-\Delta)^{\frac{2-\beta}{2}}\tilde{w}\big)v_{1}+\big((-\Delta)^{\frac{2-\beta}{2}}w_{2}\big)\tilde{v}\right)\, d\theta\bigg\|_{\mathcal{M}^{2-\beta}_{\frac{d-\lambda}{2-\beta},\lambda}}\\
&\lesssim\bigg\|(-\Delta)^{\frac{2-\beta}{2}}\int_{0}^{t}(t-\theta)^{\alpha-1}\mathcal{P}_{\alpha,\beta}(t-\theta)\left(\tilde{u}\nabla w_{1}+u_{2}\nabla \tilde{w}+\big((-\Delta)^{\frac{2-\beta}{2}}\tilde{w}\big)v_{1}+\big((-\Delta)^{\frac{2-\beta}{2}}w_{2}\big)\tilde{v}\right)\, d\theta\bigg\|_{\mathcal{M}_{\frac{d-\lambda}{2-\beta},\lambda}}\\
&\lesssim\kappa\int_{0}^{\varrho t}(t-\theta)^{\alpha-\frac{\alpha(d-\lambda)}{\beta q_{1}}-\frac{\alpha(d-\lambda)}{\beta q_{3}}-1}\theta^{-\frac{\chi_{1}+\chi_{3}}{2}}\left(\theta^{\frac{\chi_{1}}{2}}\big\|\tilde{u}(\theta)\big\|_{q_{1},\lambda}
+\theta^{\frac{\chi_{3}}{2}}\big\|\nabla\tilde{w}(\theta)\big\|_{q_{3},\lambda}\right)d\theta\\
&\quad +\kappa\int_{0}^{\varrho t}(t-\theta)^{2\alpha-\frac{2\alpha}{\beta}-\frac{\alpha(d-\lambda)}{\beta q_{2}}-1}\theta^{-\frac{\chi_{2}}{2}}\left(\theta^{\frac{\chi_{2}}{2}}\big\|\tilde{v}(\theta)\big\|_{q_{2},\lambda}+\big\|\tilde{w}(\theta)
\big\|_{\mathcal{M}^{2-\beta}_{\frac{d-\lambda}{2-\beta},\lambda}}\right)d\theta\\
&\quad +\kappa\bigg(
\sup_{\varrho t\leq \theta\leq t}\left(\theta^{\frac{\chi_{1}}{2}}\big\|\tilde{u}(\theta)\big\|_{q_{1},\lambda}\right)+\sup_{\varrho t\leq \theta\leq t}\left(\theta^{\frac{\chi_{2}}{2}}\big\|\tilde{v}(\theta)\big\|_{q_{2},\lambda}\right)\\
&\quad +\sup_{\varrho t\leq \theta\leq t}\left(\theta^{\frac{\chi_{3}}{2}}\big\|\nabla\tilde{w}(\theta)\big\|_{q_{3},\lambda}\right)+\sup_{\varrho t\leq \theta\leq t}\big\|\tilde{w}(\theta)
\big\|_{\mathcal{M}^{2-\beta}_{\frac{d-\lambda}{2-\beta},\lambda}}\bigg).
\end{align*}
Thus, we obtain that
\begin{align*}
&\left\|\left(t^{\frac{\chi_{1}}{2}}\left(u_{1}-u_{2}\right),t^{\frac{\chi_{2}}{2}}\left(v_{1}-v_{2}\right),
t^{\frac{\chi_{3}}{2}}\left(\nabla w_{1}-\nabla w_{2}\right),w_{1}-w_{2}\right)\right\|_{\mathcal{M}_{q_{1},\lambda}\times\mathcal{M}_{q_{2},\lambda}\times\mathcal{M}_{q_{3},\lambda}
\times\mathcal{M}^{2-\beta}_{\frac{d-\lambda}{2-\beta},\lambda}}\\
&\lesssim t^{\frac{\chi_{1}}{2}}\left\|\mathcal{S}_{\alpha,\beta}(t)(u_{10}-u_{20})\right\|_{\mathcal{M}_{q_{1},\lambda}}
+t^{\frac{\chi_{2}}{2}}\left\|\mathcal{S}_{\alpha,\beta}(t)(v_{10}-v_{20})
\right\|_{\mathcal{M}_{q_{2},\lambda}}\\
&\quad +t^{\frac{\chi_{3}}{2}}\left\|\mathcal{S}_{\alpha,\beta}(t)(\nabla w_{10}-\nabla w_{20})\right\|_{\mathcal{M}_{q_{3},\lambda}}
+\left\|\mathcal{S}_{\alpha,\beta}(t)(w_{10}-w_{20})\right\|
_{\mathcal{M}^{2-\beta}_{\frac{d-\lambda}{2-\beta},\lambda}}\\
&\quad +\kappa\bigg(
\sup_{\varrho t\leq \theta\leq t}\left(\theta^{\frac{\chi_{1}}{2}}\big\|\tilde{u}(\theta)\big\|_{\mathcal{M}_{q_{1},\lambda}(\mathbb{R}^{d})}\right)+\sup_{\varrho t\leq \theta\leq t}\left(\theta^{\frac{\chi_{2}}{2}}\big\|\tilde{v}(\theta)\big\|_{\mathcal{M}_{q_{2},\lambda}(\mathbb{R}^{d})}\right)\\
&\quad +\sup_{\varrho t\leq \theta\leq t}\big\|\tilde{w}(\theta)
\big\|_{\mathcal{M}^{2-\beta}_{\frac{d-\lambda}{2-\beta},\lambda}(\mathbb{R}^{d})}+\sup_{\varrho t\leq \theta\leq t}\left(\theta^{\frac{\chi_{3}}{2}}\big\|\nabla\tilde{w}(\theta)\big\|_{\mathcal{M}_{q_{3},\lambda}(\mathbb{R}^{d})}\right)\bigg)
+H_{1}(t)+H_{2}(t),
\end{align*}
where
\begin{align*}
H_{1}(t)&=\kappa\int_{0}^{\varrho}(1-\theta)^{\alpha-\frac{\alpha}{\beta}-\frac{\alpha(d-\lambda)}{\beta}
(\frac{2}{q_{1}}-\frac{1}{q_{1}})-1}\theta^{-\chi_{1}}
\left((t\theta)^{\frac{\chi_{1}}{2}}\big\|\tilde{u}(t\theta)\big\|_{q_{1},\lambda}\right)d\theta\\
&\quad +\kappa\int_{0}^{\varrho}(1-\theta)^{\alpha-\frac{\alpha(d-\lambda)}{\beta}(\frac{1}{q_{2}}+\frac{1}{d-\lambda}-\frac{1}{q_{1}})-1}
\theta^{-\frac{\chi_{2}}{2}}\left((t\theta)^{\frac{\chi_{2}}{2}}\big\|\tilde{v}(t\theta)\big\|_{q_{2},\lambda}\right)d\theta\\
&\quad +\kappa\int_{0}^{\varrho}(1-\theta)^{\alpha-\frac{\alpha}{\beta}-\frac{\alpha(d-\lambda)}{\beta q_{1}}-1}\theta^{-\frac{\chi_{1}+\chi_{2}}{2}}\left((t\theta)^{\frac{\chi_{1}}{2}}
\big\|\tilde{u}(t\theta)\big\|_{\mathcal{M}_{q_{1},\lambda}(\mathbb{R}^{d})}
+(t\theta)^{\frac{\chi_{2}}{2}}\big\|\tilde{v}(t\theta)\big\|_{\mathcal{M}_{q_{2},\lambda}(\mathbb{R}^{d})}\right)d\theta\\
&\quad +\kappa\int_{0}^{\varrho}(1-\theta)^{\frac{\alpha}{\beta}-\frac{\alpha(d-\lambda)}{\beta q_{3}}-1}
\theta^{-\frac{\chi_{2}+\chi_{3}}{2}}\left((t\theta)^{\frac{\chi_{3}}{2}}
\big\|\nabla\tilde{w}(t\theta)\big\|_{\mathcal{M}_{q_{3},\lambda}(\mathbb{R}^{d})}
+(t\theta)^{\frac{\chi_{2}}{2}}\big\|\tilde{v}(t\theta)\big\|_{\mathcal{M}_{q_{2},\lambda}(\mathbb{R}^{d})}
\right)d\theta,
\end{align*}
and
\begin{align*}
H_{2}(t)&=\kappa\int_{0}^{\varrho}(1-\theta)^{\alpha-\frac{\alpha}{\beta}-\frac{\alpha(d-\lambda)}{\beta q_{1}}-1}\theta^{-\frac{\chi_{1}+\chi_{3}}{2}}\left((t\theta)^{\frac{\chi_{3}}{2}}\big\|\nabla\tilde{w}(t\theta)\big\|
_{\mathcal{M}_{q_{3},\lambda}(\mathbb{R}^{d})}+(t\theta)^{\frac{\chi_{1}}{2}}\big\|\tilde{u}(t\theta)\big\|
_{\mathcal{M}_{q_{1},\lambda}(\mathbb{R}^{d})}
\right)d\theta\\
&\quad +\kappa\int_{0}^{\varrho}(1-\theta)^{2\alpha-\frac{3\alpha}{\beta}-\frac{\alpha(d-\lambda)}{\beta q_{2}}+\frac{\alpha(d-\lambda)}{\beta q_{3}}-1}\theta^{-\frac{\chi_{2}}{2}}\left((t\theta)^{\frac{\chi_{2}}{2}}\big\|\tilde{v}(t\theta)\big\|_{q_{2},\lambda}+\big\|\tilde{w}(t\theta)
\big\|_{\mathcal{M}^{2-\beta}_{\frac{d-\lambda}{2-\beta},\lambda}}\right)d\theta\\
&\quad +\kappa\int_{0}^{\varrho}(1-\theta)^{\alpha-\frac{\alpha(d-\lambda)}{\beta q_{1}}-\frac{\alpha(d-\lambda)}{\beta q_{3}}-1}\theta^{-\frac{\chi_{1}+\chi_{3}}{2}}\left((t\theta)^{\frac{\chi_{3}}{2}}\big\|\nabla\tilde{w}(t\theta)\big\|
_{\mathcal{M}_{q_{3},\lambda}(\mathbb{R}^{d})}+
(t\theta)^{\frac{\chi_{1}}{2}}\big\|\tilde{u}(t\theta)\big\|_{\mathcal{M}_{q_{1},\lambda}(\mathbb{R}^{d})}\right)d\theta\\
&\quad +\kappa\int_{0}^{\varrho}(1-\theta)^{2\alpha-\frac{2\alpha}{\beta}-\frac{\alpha(d-\lambda)}{\beta q_{2}}-1}\theta^{-\frac{\chi_{2}}{2}}\left(\big\|\tilde{w}(t\theta)
\big\|_{\mathcal{M}^{2-\beta}_{\frac{d-\lambda}{2-\beta},\lambda}}
+(t\theta)^{\frac{\chi_{2}}{2}}\big\|\tilde{v}(t\theta)\big\|_{\mathcal{M}_{q_{2},\lambda}(\mathbb{R}^{d})}
\right)d\theta.\\
\end{align*}
{\bf Step 3.}
Therefore, if $\lim_{t\rightarrow\infty}f(t)=0$, from above estimates, we have
$$
\limsup_{t\rightarrow\infty}g(t)\leq C\kappa\left(1+H(\varrho)\right)\limsup_{t\rightarrow\infty}g(t),
$$
where
\begin{align*}
H(\varrho)&=\int_{0}^{\varrho}(1-\theta)^{2\alpha-\frac{2\alpha}{\beta}-\frac{2\alpha(d-\lambda)}{\beta q_{1}}-1}\theta^{-\chi_{1}}d\theta +\int_{0}^{\varrho}(1-\theta)^{2\alpha-\frac{2\alpha}{\beta}-\frac{2\alpha(d-\lambda)}{\beta q_{2}}-1}\theta^{-\frac{\chi_{2}}{2}}d\theta\\
&\quad +\int_{0}^{\varrho}(1-\theta)^{3\alpha-\frac{3\alpha}{\beta}-\frac{\alpha(d-\lambda)}{\beta q_{1}}-\frac{\alpha(d-\lambda)}{\beta q_{2}}-1}\theta^{-\frac{\chi_{1}+\chi_{2}}{2}}d\theta\\
&\quad +\int_{0}^{\varrho}(1-\theta)^{2\alpha-\frac{\alpha}{\beta}-\frac{\alpha(d-\lambda)}{\beta q_{2}}-\frac{\alpha(d-\lambda)}{\beta q_{3}}-1}\theta^{-\frac{\chi_{2}+\chi_{3}}{2}}d\theta\\
&\quad+\int_{0}^{\varrho}(1-\theta)^{\alpha-\frac{\alpha(d-\lambda)}{\beta q_{1}}-\frac{\alpha(d-\lambda)}{\beta q_{3}}-1}
\theta^{-\frac{\chi_{1}+\chi_{3}}{2}}d\theta+\int_{0}^{\varrho}(1-\theta)^{2\alpha-\frac{2\alpha}{\beta}-\frac{2\alpha(d-\lambda)}{\beta q_{2}}-1}
\theta^{-\frac{\chi_{2}}{2}}d\theta\\
&\quad +\int_{0}^{\varrho}(1-\theta)^{\alpha-\frac{\alpha}{\beta}-\frac{\alpha(d-\lambda)}{\beta}
(\frac{2}{q_{1}}-\frac{1}{q_{1}})-1}\theta^{-\chi_{1}}d\theta\\
&\quad +\int_{0}^{\varrho}(1-\theta)^{\alpha-\frac{\alpha(d-\lambda)}{\beta}(\frac{1}{q_{2}}+\frac{1}{d-\lambda}-\frac{1}{q_{1}})-1}
\theta^{-\frac{\chi_{2}}{2}}d\theta\\
&\quad +\int_{0}^{\varrho}(1-\theta)^{\alpha-\frac{\alpha}{\beta}-\frac{\alpha(d-\lambda)}{\beta q_{1}}-1}\theta^{-\frac{\chi_{1}+\chi_{2}}{2}}d\theta+\int_{0}^{\varrho}(1-\theta)^{\frac{\alpha}{\beta}-\frac{\alpha(d-\lambda)}{\beta q_{3}}-1}
\theta^{-\frac{\chi_{2}+\chi_{3}}{2}}d\theta\\
&\quad +\int_{0}^{\varrho}(1-\theta)^{\alpha-\frac{\alpha}{\beta}-\frac{\alpha(d-\lambda)}{\beta q_{1}}-1}\theta^{-\frac{\chi_{1}+\chi_{3}}{2}}d\theta\\
&\quad +\int_{0}^{\varrho}(1-\theta)^{2\alpha-\frac{3\alpha}{\beta}-\frac{\alpha(d-\lambda)}{\beta q_{2}}+\frac{\alpha(d-\lambda)}{\beta q_{3}}-1}\theta^{-\frac{\chi_{2}}{2}}d\theta\\
&\quad +\int_{0}^{\varrho}(1-\theta)^{\alpha-\frac{\alpha(d-\lambda)}{\beta q_{1}}-\frac{\alpha(d-\lambda)}{\beta q_{3}}-1}\theta^{-\frac{\chi_{1}+\chi_{3}}{2}}d\theta+\int_{0}^{\varrho}(1-\theta)^{2\alpha-\frac{2\alpha}{\beta}-\frac{\alpha(d-\lambda)}{\beta q_{2}}-1}\theta^{-\frac{\chi_{2}}{2}}d\theta.
\end{align*}
By the proposition of the Beta function, we have $\lim_{\varrho\rightarrow 0^{+}}H(\varrho)=0$ and the constant $\kappa$
is small enough, thus, we obtain that $\limsup_{t\rightarrow\infty}g(t)=0$.

{\bf Step 4.}
On the other hand, if $\limsup_{t\rightarrow\infty}g(t)=0$, note that
\begin{align*}
&\left(\mathcal{S}_{\alpha,\beta}(t)\left(u_{10}-u_{20},v_{10}-v_{20},w_{10}-w_{20}\right)\right)\\
&=
\left(u_{1}-u_{2},v_{1}-v_{2},w_{1}-w_{2}\right)-\left(\tilde{u},\tilde{v},\tilde{w}\right),
\end{align*}
thus, we get
\begin{align*}
\limsup_{t\rightarrow\infty}f(t)\leq C\kappa\left(1+H(\varrho)\right)\limsup_{t\rightarrow\infty}g(t)=0
\end{align*}
The proof is thus completed.
\end{proof}
\section{Conclusions}
In this article, we establish the global existence and analyze the behavior of a class of time-space fractional Navier-Stokes-Keller-Segel systems \eqref{Eq:K-S-N-S} in Besov-Morrey spaces by using the embedding theorem of Besov-Morrey spaces, multiplier theory, real interpolation techniques, and the Hardy-Littlewood inequality in Morrey spaces. This system is a generalized form of the Navier-Stokes-Keller-Segel system established in \cite{M.H. Yang}. Due to the lack of smoothness at $\xi=0$ of the symbol of the fractional heat kernel $e^{-t(-\Delta)^{\frac{\beta}{2}}}$, we establish a time-space estimate in Besov-Morrey spaces for the fractional heat kernel $e^{-t(-\Delta)^{\frac{\beta}{2}}}$ using multiplier theory in Besov-Morrey spaces. Specifically, when $\beta=2$, it corresponds to the time-space estimate of the classical heat kernel $e^{t\Delta}$.

Due to the lack of semigroup structure of the solution operator $\mathcal{S}_{\alpha,\beta}(t)$ and $\mathcal{P}_{\alpha,\beta}(t)$, we cannot use the smoothing effect of the heat kernel in Besov-Morrey spaces as in \eqref{HKSE}, which is fundamentally different from the methods in \cite{M.H. Yang, J. Zhang}. It is worth noting that our approach is also applicable to the systems in \cite{M.H. Yang, J. Zhang}. Moreover, since $C_{0}^{\infty}(\mathbb{R}^{d})$ is not dense in Besov-Morrey spaces, and the lack of smoothness at $\xi=0$ of $e^{-t|\xi|^{\beta}}$ leads to the inability of $e^{-t(-\Delta)^{\frac{\beta}{2}}}$ to act as a multiplier on $\mathcal{S}'$, the solution of \eqref{Eq:K-S-N-S} at $t=0$ weak* convergence is not clear, which is worthy of further discussion.

\noindent{\bf Declaration of competing interest}\\
The authors declare that they have no competing interests.\\
\noindent{\bf Data availability}\\
No data was used for the research described in the article.\\
\noindent{\bf Acknowledgements}\\
This work was supported by National Natural Science Foundation of China (12471172) and Fundo para o Desenvolvimento das Ci\^{e}ncias e da Tecnologia of Macau (No. 0092/2022/A).

\end{document}